\crefname{equation}{}{}
\numberwithin{equation}{section}
\theoremstyle{plain} 
\newtheorem{theorem}{Theorem}[section] 
\newtheorem{lemma}[theorem]{Lemma} 
\newtheorem{corollary}[theorem]{Corollary} 
\newtheorem{proposition}[theorem]{Proposition}
\theoremstyle{definition} 
\newtheorem{definition}[theorem]{Definition} 
\newtheorem{remark}[theorem]{Remark}
\newtheorem{example}[theorem]{Example}
\DeclareMathOperator{\rk}{rk}
\DeclareMathOperator{\Span}{Span}
\DeclareMathOperator{\Ran}{ran}
\DeclareMathOperator{\sgn}{sgn}
\newcommand{\RR}{\mathbb{R}}
\newcommand{\CC}{\mathbb{C}}
\newcommand{\NN}{\mathbb{N}}
\newcommand{\PP}{\mathbb{P}}
\newcommand{\VV}{\mathbb{V}}
\newcommand{\HC}{\mathfrak{H}}
\newcommand{\LC}{\mathfrak{L}}
\newcommand{\ol}{\overline}
\newcommand{\ul}{\underline}
\renewcommand{\vec}[1]{\mathbf{#1}}
\newcommand{\mi}[1]{\bm{#1}}
\newcommand{\dua}[2]{\langle #1, #2 \rangle}
\renewcommand{\epsilon}{\varepsilon}
\renewcommand{\kappa}{\varkappa}
\renewcommand{\phi}{\varphi}
\renewcommand{\setminus}{\smallsetminus}
\newcommand{\wt}[1]{\widetilde{#1}}
\newcommand{\wh}[1]{\widehat{#1}}
\newcommand{\iv}[1]{\,\mathrm{d}#1}
\newcommand{\grad}{\bm{\nabla}}
\newcommand{\lapl}{\triangle}
\newcommand{\ham}{\mathcal{H}}
\newcommand{\VC}{\mathfrak{V}}
\newcommand{\VF}{\mathfrak{v}}
\newcommand{\ene}{\mathcal{E}}
\newcommand{\bve}[1]{\overrightarrow{#1}}
\newcommand{\HA}{\mathrm{H}}
\newcommand{\ci}{\mathrm{CI}}
\newcommand{\cc}{\mathrm{CC}}
\newcommand{\occu}{\mathrm{occ}}
\newcommand{\virtu}{\mathrm{virt}}
\newcommand{\eff}{\mathrm{eff}}
\newcommand{\full}{\mathrm{full}}
\newcommand{\occ}[2]{\underline{#2}_{#1}}
\newcommand{\virt}[2]{\overline{#2}^{#1}}
\newcommand{\vertiii}[1]{{\left\vert\kern-0.25ex\left\vert\kern-0.25ex\left\vert #1
   \right\vert\kern-0.25ex\right\vert\kern-0.25ex\right\vert}}
\DeclareSymbolFont{bbold}{U}{bbold}{m}{n}
\DeclareSymbolFontAlphabet{\mathbbold}{bbold}
\begin{document}

% Title. If the supplement option is on, then "Supplementary Material"
% is automatically inserted before the title.
\title{Coupled-Cluster Theory Revisited\\\vspace*{2em} Part I: Discretization}

% Authors: full names plus addresses.
\author{Mih\'aly A. Csirik}\address{Hylleraas Centre for Quantum Molecular Sciences, Department of Chemistry, University of Oslo, P.O. Box 1033 Blindern, N-0315 Oslo, Norway (\email{m.a.csirik@kjemi.uio.no})} 
\author{Andre Laestadius}\address{Department of Computer Science, Oslo Metropolitan University, P.O. Box 4 St. Olavs plass, NO-0130 Oslo, Norway (\email{andre.laestadius@oslomet.no})}\sameaddress{1}

\date{Received June 22, 2022. Accepted November 14, 2022.}

% Sets running headers as well as PDF title and authors
%\headers{Coupled-Cluster Theory Revisited}

\begin{abstract}
In a series of two articles, we propose a comprehensive mathematical framework for Coupled-Cluster-type methods. These methods aim at accurately solving the many-body Schr\"odinger equation.
In this first part, we rigorously describe the discretization schemes involved in Coupled-Cluster methods using graph-based concepts. This allows us to discuss different methods in a unified and more transparent manner, including multireference methods. Moreover, we derive the single-reference and the Jeziorski--Monkhorst multireference 
Coupled-Cluster equations in a unified and rigorous manner.
\end{abstract}

\keywords{quantum mechanics, many-body problem, quantum chemistry, electronic structure, coupled-cluster theory}

\subjclass{81V55, 81-08, 81-10}

\maketitle

\section{Introduction}

The Coupled-Cluster (CC) method 
is one of the most popular methods in computational quantum chemistry among Hartree--Fock (HF)
 and Density-Functional Theory (DFT). 
In its full generality, the quantum many-body problem is intractable, and it is one of the greatest challenges of quantum mechanics 
to devise practically useful methods to approximate the solutions of the many-body Schr\"odinger equation. 
Although the stationary Schr\"odinger equation itself is a linear eigenvalue problem, it is extremely high-dimensional
even for a few particles and a low-dimensional one-particle space.\footnote{The dimension is ${K \choose N}$, where $N$ is the number of particles, and $K$ is dimension of
the one-particle Hilbert space.} Here, we focus on those fermionic systems which are described by the
so-called \emph{molecular Hamilton operator}---on which most electronic-structure models are based in quantum chemistry.
The Galerkin method applied to the Schr\"odinger equation (sometimes combined with an initial HF ``guess'') is branded Configuration Interaction (CI) in computational quantum chemistry; unfortunately, its applicability is limited due to the aforementioned high-dimensionality issue. The HF method is perhaps conceptually the simplest, whereby the ground state is approximated by minimizing the energy of the system 
over Slater determinants; the resulting Euler--Lagrange equations constitute a nonlinear eigenvalue problem that yields the HF ground state.
HF theory has attracted much interest in the mathematical physics community, see e.g. \cite{lieb1974solutions,lions1987solutions,bach1992error,bach1997there,cances2000convergence,cances2003computational,solovej2003ionization,friesecke2003multiconfiguration,lewin2018existence}. 
The spiritual successor to the statistical mechanics-motivated Thomas--Fermi theory---DFT---is the single most used method in quantum chemistry, and some of its 
mathematical aspects are also highly non-trivial \cite{lieb1983density,eschrig1996fundamentals,lewin2018semi,lewin2020local}. 

CC theory is a vast and highly active subfield of quantum chemistry, consisting of many variants and refinements. %  of the CC method. 
However, among the aforementioned methods, the CC approach has arguably received the least attention in the mathematics community.

\subsection{Previous work}
It is beyond the scope of this paper to give a historical review of the CC method and its vast number of variants. The interested reader is pointed to
\cite{helgaker2014molecular,kummel1978many,bishop1991overview,bartlett2007coupled,shavitt2009many}. The survey article \cite{wilson2013methods}
is somewhat more mathematically-oriented and also proposes a rather general framework.

The analysis of the single-reference CC method by R. Schneider and T. Rohwedder \cite{schneider2009analysis,rohwedder2013continuous,rohwedder2013error} serves as a starting point of our description of
the CC discretization. In Part II, we will briefly summarize \cite{schneider2009analysis} and its follow-up works, where the context is more appropriate.

\subsection{Outline}

It is our intention to present both known and new results in a self-contained manner and primarily with a mathematical audience in mind.
In \cref{secbg}, we describe the setting of the quantum-mechanical problems the CC theory is aimed at. 
 Next, \cref{seccicc} gives a rough sketch of the most basic CI and CC methods.

We begin our discussion in \cref{secexord} with the definition of a partial order relation which will be used to encode the relevant transitions
of the system, called \emph{excitations}. This partial order, and the induced lattice operations will be used in \cref{exgrsec} to
define the \emph{excitation graph}, which fully describes the CC discretization scheme. We give a few examples of the generality of our concepts
and also extend the definition of the excitation graph to the multireference (MR) case.
  After this, the corresponding \emph{excitation operators} (\cref{exop}), 
\emph{cluster operators} (\cref{clustop}) and \emph{cluster amplitude spaces} (\cref{secamp}) are constructed, which are the essential building blocks for the formulation
of any CC-like method.

In \cref{ccdersec}, we give short derivations of the conventional single-reference (SRCC) and Jeziorski--Monkhorst multireference (JM-MRCC) methods. We do so by generalizing the known procedure
which is based on perturbation theory.

In \cref{appgraph} we calculate various graph-theoretic properties of the excitation graph.
In \cref{optimalmulti} we propose a method based on linear programming to select reference determinants for the multi-reference setting in an optimal way.

\section{Background}\label{secbg}
In this section we collect the concepts and results that are necessary for the forthcoming discussion.
For proofs and more about the mathematical foundations of quantum mechanics,
see e.g. \cite{reed1978i,reed1975ii,reed1978iv,gustafson2011mathematical,yserentant2010regularity,hislop2012introduction,lieb2010stability}.

 The spectrum of a linear operator $A$ is written $\sigma(A)$,
the elements of its discrete spectrum as $\ene_n(A)$, where $n=0,1,2,\ldots$, if $A$ is bounded from below.
We use the usual notation $[A,B]=AB-BA$ for the commutator. The transpose of $A$ is denoted as $A^\dag$. For normed spaces $V$ and $W$, the symbol $\mathcal{L}(V,W)$ denotes normed
space of \emph{bounded} linear mappings $V\to W$ endowed with the operator norm $\|\cdot\|_{\mathcal{L}(V,W)}$. Furthermore, $V^*$ denotes the (continuous) dual space. 

\subsection{Function spaces}\label{secfnsp}

In the context of many-body quantum mechanics, the Lebesgue-, and Sobolev spaces $L^2(\RR^3)$ and $H^1(\RR^3)$ are viewed as ``one-particle spaces''.
We ignore spin for simplicity and consider $L^2(\RR^3)$ and $H^1(\RR^3)$ as \emph{real} Hilbert spaces, again for clarity. These choices are justified for our
model Hamiltonian (see \cref{molhamsec} below), but we remark that all the forthcoming considerations can be trivially extended to the more general setting.  The one-particle spaces
 are then used to define the
$N$-particle \emph{fermionic} spaces (see e.g. \cite{lieb1996analysis})
$$
\LC^2=\bigwedge^N L^2(\RR^{3}), \quad \text{and} \quad \HC^1=\LC^2 \cap H^1(\RR^{3N}),
$$
endowed with the inner products
$$
\dua{\Psi}{\Phi}= \int_{\RR^{3N}} \Psi(\vec{X}) \Phi(\vec{X}) \iv{\vec{X}}
$$
and
$$
\dua{\Psi}{\Phi}_{\HC^1}=\dua{\Psi}{\Phi} + \sum_{k=1}^N \int_{\RR^{3N}}  \grad_{\vec{x}_k} \Psi(\vec{X}) \cdot \grad_{\vec{x}_k} \Phi(\vec{X}) \iv{\vec{X}},
$$
respectively. Here, $\vec{X}=(\vec{x}_1,\ldots,\vec{x}_N)\in\RR^{3N}$
and $\vec{z}\cdot\vec{w}$ denotes the Euclidean inner product. Also,
$\grad_{\vec{x}_k}=(\partial_{x_k^1},\partial_{x_k^2},\partial_{x_k^3})$ is the distributional gradient operator acting on the $k$th triple of the arguments.
The norms corresponding to $\dua{\cdot}{\cdot}$ and $\dua{\cdot}{\cdot}_{\HC^1}$ are denoted as $\|\cdot\|$ and $\|\cdot\|_{\HC^1}$, respectively.
We define the second order Sobolev space as $\HC^2=\LC^2 \cap H^2(\RR^{3N})$.

Let $K\geq N$ or $K = \infty$ and assume that an $L^2$-orthonormal \emph{(spin-)orbital set} $\mathcal{B}_K=\{\phi_p\}_{p=1}^K\subset H^1(\RR^3)$ is given.
 We define the subspace $H_K^1(\RR^3)=\Span\mathcal{B}_K\subset H^1(\RR^3)$. 
Corresponding to $\mathcal{B}_K$ we can construct the set of \emph{Slater determinants}
$$
\mathfrak{B}_K=\{ \Phi_{\alpha} \in \HC^1 : 1\le\alpha_1<\ldots<\alpha_N\le K,\; \Phi_{\alpha}(\mathbf{X})=N!^{-1/2}\det(\phi_{\alpha_i}(\vec{x}_j))_{1\le i,j\le N} \}.
$$
Then $\mathfrak{B}_K$ is $\LC^2$-orthonormal. Set
$$
\HC^1_K=\Span\mathfrak{B}_K\subset\HC^1. %\LC^2_K=\Span\mathfrak{B}\subset\LC^2,  \quad \text{and} \quad 
$$

The negative exponent Sobolev space $\HC^{-1}$ will also be used in the sequel, which is given by the continuous dual space $(\HC^1)^*$.
We will exploit that the dense continuous embeddings $\HC^1\subset\LC^2\subset\HC^{-1}$ hold true (see e.g. \cite{adams2003sobolev}), i.e. the spaces in question form a \emph{Gelfand triple}.

\begin{remark}\label{gelfandrmrk}
An important remark is in order. Recall that $V\subset H\subset V^*$ are said to form a Gelfand triple 
if $V$ is a real reflexive Banach space, $H$ is real separable Hilbert space and the embedding $V\subset H$ is continuous
and $V$ is dense in $H$ (see e.g. \cite[Section 23.4]{zeidler1985nonlinear}). It follows that for any $\Psi\in H$
there is a $\wh{\Psi}\in V^*$ such that $\dua{\wh{\Psi}}{\Phi}_{V^*\times V}=\dua{\Psi}{\Phi}_H$ for all $\Phi\in V$,
and the mapping $H\to V^*$ given by $\Psi\mapsto\wh{\Psi}$ is linear, injective and continuous. Hence, the embedding $H\subset V^*$ is also continuous (and dense). Henceforth, we will write $\Psi$ in place of $\wh{\Psi}$ for brevity.
\end{remark}

\noindent\textbf{Convention:} We will drop the subscript from $\dua{\cdot}{\cdot}$, as it is \emph{either} obvious from its arguments that the duality pairing $\dua{\cdot}{\cdot}_{V^*\times V}$ has
to be used, \emph{or} both $\dua{\cdot}{\cdot}_{V^*\times V}$ and $\dua{\cdot}{\cdot}_H$ are acceptable due to the Gelfand triple setting as
discussed above. In particular, we apply this convention to $\HC^1\subset\LC^2\subset\HC^{-1}$, to the cluster amplitude spaces discussed in \cref{secamp}
and also in the general framework of \cref{ccdersec}.

\subsection{Schr\"odinger Hamiltonian}\label{molhamsec}

In this section, we introduce the model Hamiltonian for concreteness.
Let $V,w:\RR^3\to\RR$ be Kato class potentials: $V,w\in L^{3/2}(\RR^3)+L^\infty_\epsilon(\RR^3)$\footnote{
By definition $f\in L^{3/2}(\RR^3)+L^\infty_\epsilon(\RR^3)$, if for every $\epsilon>0$ there is an $f_1\in L^{3/2}(\RR^3)$ and $f_2\in L^\infty(\RR^3)$ with $\|f_2\|_\infty<\epsilon$
so that $f=f_1+f_2$.} with $w$ even and define the quadratic form $\ene$ on $\HC^1$ as
$$
\ene(\Psi)=\frac{1}{2}\|\grad \Psi\|^2  + \int_{\RR^{3N}} \Bigg(\sum_{1\le i\le N} V(\vec{x}_i) + \sum_{1\le i<j\le N} w(\vec{x}_i-\vec{x}_j)\Bigg)
|\Psi(\vec{X})|^2 \, \mathrm{d}\vec{X}
$$
for any $\Psi\in\HC^1$. For every $\epsilon>0$, there is a $C_\epsilon>0$ so that Kato's inequality (see e.g. \cite{friesecke2003multiconfiguration} for a detailed proof),
$$
\frac{1-\epsilon}{2} \|\grad \Psi\|^2 - C_\epsilon \|\Psi\|^2\le \ene(\Psi) \le \frac{1+\epsilon}{2} \|\grad \Psi\|^2 + C_\epsilon \|\Psi\|^2
\quad\text{for all}\quad \Psi\in \HC^1,
$$
holds true. This implies that the quadratic form induced by $V$ and $w$ is infinitesimally form bounded with respect to $-\lapl$. 
The KLMN theorem (see e.g. \cite{reed1975ii}) implies that there exists a unique self-adjoint operator $\ham:D(\ham)\to\LC^2$ associated to $\ene$, having form domain $Q(\ham)=Q(\ene)=\HC^1$
and being lower semibounded. This $\ham$ is given by
$$
(\ham\Psi)(\vec{X})=\sum_{1\le i\le N} \Bigg[-\frac{1}{2}\lapl_{\vec{x}_i} + V(\vec{x}_i)\Bigg]\Psi(\vec{X}) +  \sum_{1\le i<j\le N} w(\vec{x}_i-\vec{x}_j)\Psi(\vec{X}),
$$
for all $\Psi\in D(\ham)$ and $\vec{X}\in\RR^{3N}$.
Kato's inequality implies that there is a constant $M>0$, such that
\begin{equation}\label{Hbound}
\dua{\ham\Psi}{\Phi}\le M\|\Psi\|_{\HC^1}\|\Phi\|_{\HC^1}
\end{equation}
for all $\Psi,\Phi\in\HC^1$.
Thus, $\ham$ can be extended to a bounded mapping $\HC^1\to\HC^{-1}$, which we denote with the same symbol.
We say that $\Psi\in\HC^1$ and $\ene\in\RR$ satisfy the \emph{weak Schr\"odinger equation} if  $\dua{\ham\Psi}{\Phi}=\ene\dua{\Psi}{\Phi}$ for all $\Phi\in\HC^1$.

As far as the finite-dimensional case $K<\infty$ is concerned, we simply consider the Galerkin projection of the weak Schr\"odinger equation.
More precisely, let $\HC^1_K\subset\HC^1$ be as defined in \cref{secfnsp}. Then $\Psi\in\HC^1_K$ and $\ene\in\RR$ are said to satisfy the \emph{projected Schr\"odinger equation}
if $\dua{\ham\Psi}{\Phi}=\ene\dua{\Psi}{\Phi}$ for all $\Phi\in\HC_K^1$.

The so-called (electronic) molecular Hamiltonian $\ham$ corresponds to the special case
$$
V(\vec{x})=-\sum_{1\le j\le M} \frac{Z_j}{|\vec{x}-\vec{r}_j|}\quad\text{and}\quad w(\vec{x})=\frac{1}{|\vec{x}|},
$$
where $Z_j\in\NN$ ($j=1,\ldots,M$) and $\vec{r}_1,\ldots,\vec{r}_M\in\RR^{3}$ denote the charges and the positions of the $M\in\NN$ nuclei.

\subsection{The CI and the CC method}\label{seccicc}

We now give a very rough description of the single-reference CI and CC methods. For the rigorous derivations, we refer to \cref{ccdersec}.

In a preliminary step---typically using the Hartree--Fock method---the \emph{reference determinant}
$$
\Phi_0=N!^{-1/2}\det(\phi_{i}(\vec{x}_j))_{1\le i,j\le N}
$$
is constructed and normalized so that $\|\Phi_0\|=1$. We restrict our discussion here to the case when relevant the function spaces are real. 
The \emph{occupied orbitals} $\mathcal{B}_{\occu}=\{\phi_p\}_{p=1}^N\subset H^1(\RR^3)$ are 
extended to a basis $\mathcal{B}_K=\{\phi_p\}_{p=1}^K\subset H^1_K(\RR^3)$ by adding $K-N$ \emph{virtual orbitals} $\mathcal{B}_{K,\virtu}=\{\phi_p\}_{p=N+1}^K$, so that
$\mathcal{B}_K=\mathcal{B}_{\occu}\cup \mathcal{B}_{K,\virtu}$. Here, $K=\infty$ is allowed. The orthonormal set $\mathcal{B}_K$ generates the Slater determinants $\mathfrak{B}_K$
and the subspace $\HC^1_K\subset\HC^1$ (see \cref{secfnsp}).
For later convenience, we introduce the space $\HC^{1,\perp}$ as the $\LC^2$-orthogonal complement of $\Span\{\Phi_0\}$ in $\HC^1$. Further, we also 
set $\HC^{1,\perp}_K=\HC^{1,\perp}\cap\HC^1_K$. Further, we define $\LC^{2,\perp}$ as the $\LC^2$-orthogonal complement of $\Span\{\Phi_0\}$ in $\LC^2$.

In both the CI and the CC method, the Schr\"odinger equation $\ham\Psi=\ene\Psi$ is solved based on the reference wavefunction $\Phi_0$.
For simplicity,\footnote{Although the CI method is more general.}  we consider the case when
$\Psi$ is sought after in the form $\Psi=\Phi_0+\ul{\Psi}$, where $\dua{\ul{\Psi}}{\Phi_0}=0$. In other words, $\Psi$ is calculated via a \emph{correction} $\ul{\Psi}$
to $\Phi_0$.
Note that $\dua{\Psi}{\Phi_0}=1$, which is called the
\emph{intermediate normalization} condition. If the ``targeted'' wavefunction $\Psi$ happens to be orthogonal to the reference determinant $\Phi_0$,
then the Ansatz $\Psi=\Phi_0+\ul{\Psi}$ cannot yield a solution.

The \emph{Full Configuration Interaction} (FCI) method
can be summarized as follows: find $\ul{\Psi}\in\HC^{1,\perp}_K$ such that
\begin{equation}\label{FCIi}
\dua{\ham(\Phi_0+\ul{\Psi})}{\Phi}=\ene_{\ci}\dua{\Phi_0+\ul{\Psi}}{\Phi}\quad\text{ for all $\Phi\in\HC^{1,\perp}_K$.}
\end{equation}
Here, $\ene_{\ci}=\|\Psi\|^{-2}\dua{\ham\Psi}{\Psi}$ 
is called the \emph{CI-}, or \emph{variational energy}.
 The \emph{projected CI} method is simply the Galerkin projection of the previous problem to some finite dimensional subspace
$\VC_d\subset\HC^{1,\perp}_K$, i.e. to find $\ul{\Psi}_d\in\VC_d$ such that
\begin{equation}\label{CIi}
\dua{\ham(\Phi_0+\ul{\Psi}_d)}{\Phi_d}=\ene_{d,\ci}\dua{\Phi_0+\ul{\Psi}_d}{\Phi_d}\quad\text{ for all $\Phi_d\in\VC_d$.}
\end{equation}
 The choice of the Galerkin subspace $\VC_d$ is typically based on so-called truncation rank, for instance $\VC_d=\VC_{\textrm{SD}}$, is
the span of singly-, and doubly excited determinants in $\mathfrak{B}_K$. The corresponding (projected) CI method in this case is designated as ``CISD''.

The CI equations are more commonly expressed using \emph{cluster operators}. A cluster operator $C:\LC^2\to\LC^2$ is a bounded linear operator
that is a linear combination of special products of
fermionic creation and annihilation operators $a_i^\dag$ and $a_i$ (see Part II for a definition), so that the action of each such product is to replace some occupied orbitals 
$\mathcal{B}_{\occu}$ with the same number of virtual orbitals $\mathcal{B}_{K,\virtu}$ in a Slater determinant (see \cref{2qex}). A cluster operator $C$ can therefore be parametrized with 
the said linear-combination coefficients, which are denoted by the lower case $c$ and are called \emph{cluster amplitudes}. The vector space of all cluster amplitudes
is denoted by $\VV$.  
There is a one-to-one correspondence between functions in $\LC^{2,\perp}$ (resp. $\HC^{1,\perp}$) and functions of the form $C\Phi_0$, where $C:\LC^2\to\LC^2$ 
(resp. $C:\HC^1\to\HC^1$) is a cluster operator (see \cite{rohwedder2013continuous}).
Therefore, \cref{FCIi} can be expressed as follows: find a cluster operator $C$ (or, equivalently cluster amplitudes $c$), such that 
\begin{equation}\label{FCIC}
\dua{\ham(I+C)\Phi_0}{S\Phi_0}=\ene_{\ci}(c)\dua{(I+C)\Phi_0}{S\Phi_0}\quad\text{ for all cluster operators $S$.}
\end{equation}
Here, $\ene_{\ci}(c)=\|(I+C)\Phi_0\|^{-2}\dua{\ham(I+C)\Phi_0}{(I+C)\Phi_0}$.
Although this might seem an unnecessary complication at first, cluster operators are essential for the formulation of the CC method.

In the CC method, the ``exponential Ansatz'' is assumed for the intermediately normalized wavefunction $\Psi$. Substituting $\Psi=e^T\Phi_0$ into the Schr\"odinger
equation, where $T$ is a cluster operator, we get 
\begin{equation}\label{eTschr}
\ham e^T\Phi_0=\ene_{\cc}e^T\Phi_0,
\end{equation}
for some $\ene_{\cc}\in\RR$.
First, to determine $\ene_{\cc}$ we premultiply \cref{eTschr} by $e^{-T}$ ($e^T$ is always invertible), and take the inner product with $\Phi_0$ to
obtain the \emph{CC energy}
\begin{equation}\label{ccene}
\ene_{\cc}:=\ene_{\cc}(t)=\dua{e^{-T}\ham e^T\Phi_0}{\Phi_0},
\end{equation}
where we used the normalization $\|\Phi_0\|=1$. Second, by premultiplying \cref{eTschr} by $e^{-T}$ again, but now testing against functions in $\HC^{1,\perp}_K$, we get
the \emph{Full CC} (FCC) method: find cluster amplitudes $t_*\in\VV$ such that 
\begin{equation}\label{FCCi}
\dua{e^{-T_*}\ham e^{T_*}\Phi_0}{S\Phi_0}=0,\quad\text{ for all $s\in\VV$.}
\end{equation}
The \emph{projected CC} method is the Galerkin projection of the FCC problem with respect to some subspace $\VV_d\subset\VV$.
More precisely, the task is to find $t_{d,*}\in\VV_d$ such that
\begin{equation}\label{TCCi}
\dua{e^{-T_{d,*}}\ham e^{T_{d,*}}\Phi_0}{S_d\Phi_0}=0\quad\text{ for all $s_d\in\VV_d$.}
\end{equation}
For the moment, we denote the corresponding CC energy by $\ene_{d,\cc}$.
Again, $\VV_d$ is based on some truncation, such as SD, in which case the corresponding method is called ``CCSD''.

We now discuss the relation between CI and CC. 
It was shown that the FCI \cref{FCIi} and the FCC \cref{FCCi} equations are equivalent, see \cite[Theorem 5.3]{rohwedder2013continuous}.
\begin{theorem}[Equivalence of FCI and FCC]\label{fccfci}
The problems \cref{FCIC} and \cref{FCCi} are equivalent, i.e. a full CC solution $t_*$ is also full CI solution $I+C_*=e^{T_*}$, and vice versa. Moreover, 
 $\ene_\cc(t_*)=\ene_\ci(c_*)$ holds true.
\end{theorem}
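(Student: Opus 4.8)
The plan is to introduce the projected weak eigenvalue problem on $\HC^1_K$ as a common pivot and to show that both \cref{FCIC} and \cref{FCCi} are equivalent to it, with matching energy. Concretely, say that $\Psi\in\HC^1_K$ with $\dua{\Psi}{\Phi_0}=1$ and $\ene\in\RR$ \emph{solve the projected Schr\"odinger problem} if $\dua{\ham\Psi}{\Phi}=\ene\dua{\Psi}{\Phi}$ for all $\Phi\in\HC^1_K$. The two structural facts I would invoke throughout are: (i) the map $T\mapsto e^T=I+C$ is a bijection between cluster operators and operators of the form $I+(\text{cluster operator})$, with inverse $C\mapsto\log(I+C)$, resting on the commutativity and nilpotency of the cluster algebra and on the fact, cited from \cite{rohwedder2013continuous}, that $\HC^{1,\perp}_K$ is exactly $\{C\Phi_0\}$; and (ii) the transpose of a cluster operator is a de-excitation operator with $T^\dag\Phi_0=0$, whence $e^{T^\dag}\Phi_0=\Phi_0$ and, because $C\Phi_0\in\HC^{1,\perp}_K$, the intermediate normalization $\dua{e^T\Phi_0}{\Phi_0}=1$ holds.

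First I would record that \cref{FCIC} is equivalent to the projected Schr\"odinger problem for $\Psi=(I+C)\Phi_0$ with $\ene=\ene_\ci(c)$. Since the operators $S$ exhaust the cluster operators, \cref{FCIC} is precisely the statement $\dua{\ham\Psi}{\Phi}=\ene_\ci\dua{\Psi}{\Phi}$ for all $\Phi\in\HC^{1,\perp}_K$; the nontrivial point is that the $\Phi_0$-component is then automatic. Writing $\Psi=\Phi_0+\ul{\Psi}$ with $\ul{\Psi}\in\HC^{1,\perp}_K$ and taking $\Phi=\ul{\Psi}$ gives $\dua{\ham\Psi}{\ul{\Psi}}=\ene_\ci\|\ul{\Psi}\|^2$; combined with $\|\Psi\|^2=1+\|\ul{\Psi}\|^2$ and the definition of the Rayleigh quotient $\ene_\ci(c)$, this forces $\dua{\ham\Psi}{\Phi_0}=\ene_\ci=\ene_\ci\dua{\Psi}{\Phi_0}$. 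Hence the projected equation holds on all of $\HC^1_K=\Span\{\Phi_0\}\oplus\HC^{1,\perp}_K$, and the converse direction follows by restriction.

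Next I would show that \cref{FCCi}, together with the energy definition \cref{ccene}, is equivalent to the projected Schr\"odinger problem for $\Psi=e^T\Phi_0$ with $\ene=\ene_\cc(t)$. Reading \cref{FCCi} and \cref{ccene} together, the functional $e^{-T}\ham e^T\Phi_0$ agrees with $\ene_\cc\Phi_0$ on $\Span\{\Phi_0\}$ and on every $S\Phi_0$, hence $\dua{e^{-T}\ham e^T\Phi_0}{\Phi}=\ene_\cc\dua{\Phi_0}{\Phi}$ for all $\Phi\in\HC^1_K$. To strip the similarity transformation I would use the adjoint relation $\dua{e^{-T}f}{\Phi}=\dua{f}{e^{-T^\dag}\Phi}$ and substitute $\Phi=e^{T^\dag}\Phi'$, which is legitimate because $e^{T^\dag}$ is a bijection of $\HC^1_K$; the left-hand side then collapses to $\dua{\ham e^T\Phi_0}{\Phi'}$ and the right-hand side to $\ene_\cc\dua{e^T\Phi_0}{\Phi'}$, which is exactly the projected Schr\"odinger equation for $e^T\Phi_0$.

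Finally, the theorem follows by composition. Given an FCC solution $t_*$, set $I+C_*=e^{T_*}$ via the bijection; then $\Psi_*=e^{T_*}\Phi_0=(I+C_*)\Phi_0$ solves the projected problem with $\ene=\ene_\cc(t_*)$, so by the first equivalence $c_*$ is an FCI solution, and since $\Psi_*$ is a genuine weak eigenfunction the Rayleigh quotient yields $\ene_\ci(c_*)=\ene_\cc(t_*)$; the converse is read backwards along the same chain. I expect the main obstacle to be the functional-analytic bookkeeping hidden in these manipulations when $K=\infty$: that $e^{\pm T}$ and $e^{\pm T^\dag}$ are bounded and boundedly invertible on $\HC^1$ (and extend to $\HC^{-1}$ through the Gelfand triple), that $\log(I+C)$ is again a genuine cluster operator, and that the adjoint identity $\dua{e^{-T}f}{\Phi}=\dua{f}{e^{-T^\dag}\Phi}$ is valid for $f\in\HC^{-1}$. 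In finite dimensions these reduce to elementary linear algebra on $\HC^1_K$, but in general they are precisely the points established in \cite{rohwedder2013continuous}.
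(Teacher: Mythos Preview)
Your argument is correct and follows the same overall arc as the paper's own proof (given in \cref{blochsrcc}): both the FCI and the FCC problems are shown to be equivalent to the intermediately normalized weak Schr\"odinger equation $\dua{\ham\Psi}{\Phi}=\ene\dua{\Psi}{\Phi}$ on $\HC^1_K$, and the exponential/logarithm bijection of \cref{explog} then links the two parametrizations $I+C_*=e^{T_*}$. The technical ingredients you flag in your last paragraph---boundedness and invertibility of $e^{\pm T}$, $e^{\pm T^\dag}$ on $\HC^1$ and $\HC^{-1}$, and the validity of the adjoint identity in the Gelfand triple---are exactly the ones the paper imports from \cite{rohwedder2013continuous} (see \cref{clusH1}, \cref{explog}, and \cref{exexp}).

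The difference is one of packaging. You argue directly: for FCI you verify by a short Rayleigh-quotient computation that the $\Phi_0$-component of the eigenvalue equation is automatic once the $\HC^{1,\perp}_K$-components hold; for FCC you combine \cref{ccene} with \cref{FCCi} and pass through the bijection $\Phi\mapsto e^{T^\dag}\Phi$. The paper instead routes both equivalences through the abstract weak Bloch equation of \cref{blochthm}/\cref{blochcoro}, specializing the wave operator $\Xi$ to $e^{T_*}\Pi_{\Phi_0}$ and $(I+C_*)\Pi_{\Phi_0}$ respectively, and then performs the same $e^{-T_*^\dag}$ change of variables you use (justified via \cref{exexp}). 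Your route is more elementary and self-contained for the single-reference statement; the paper's Bloch-equation detour is heavier here but is what allows the identical template to be reused verbatim for the multireference JM-MRCC derivation in \cref{jmder}.
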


However, the corresponding Galerkin-projected problems are \emph{not} equivalent.
Further, while $\ene_{\ci}\le\ene_{d,\ci}$ due to the Rayleigh--Ritz variational principle, the same is not true for the CC method and numerical experience undoubtedly shows that \emph{there is no obvious 
relation} in general between $\ene_\cc=\ene_{\ci}$ and $\ene_{d,\cc}$; this last phenomenon is called the \emph{nonvariational property}
of CC theory. Note that according to \cref{fccfci}, FCC \emph{is} variational.

Despite this, the gains of CC over CI are significant. First, by construction, the CC method is size-consistent, even when truncated \cite[Theorem 4.10]{schneider2009analysis}.
This property is crucial for the precise determination of various chemical properties.
Second, the evaluation of expressions involving the \emph{similarity-transformed Hamilton operator} $e^{-T}\ham e^T$ is greatly eased by the formula 
\begin{equation}\label{bchham}
e^{-T}\ham e^T=\sum_{j=0}^4 \frac{1}{j!} [\ham,T]_{(j)},
\end{equation}
see \cite[Theorem A.1]{schneider2009analysis},\footnote{Their proof is straightforward to adapt to the more general Hamilton operator defined in \cref{molhamsec}.} where the iterated commutators are given by $[\ham,T]_{(0)}=\ham$ and $[\ham,T]_{(j)}=[[\ham,T]_{(j-1)},T]$ for $j\ge 1$.
Equation~\cref{bchham} may be referred to as the terminating Baker--Campbell--Hausdorff series, and it makes the computer implementation of CC methods feasible even for moderately sized systems. 
In particular, the Slater--Condon rules imply that the CC energy can be computed as\footnote{Actually, the term $\dua{\ham T_1\Phi_0}{\Phi_0}$ vanishes if $\Phi_0$ is the Hartree--Fock solution (Brillouin theorem).}
\begin{equation}\label{cceneprol}
\ene_{\cc}(t)=\dua{\ham(I+T_1+T_2+\tfrac{1}{2}T_1^2)\Phi_0}{\Phi_0}.
\end{equation}
Furthermore, \cref{bchham} also implies that the polynomial system \cref{FCCi} (and hence its Galerkin projection \cref{TCCi}) is quartic in terms of
the cluster amplitudes $t$.
Despite their apparent simplicity, the CC equations usually involve many complicated terms and even their assembly is a nontrivial task.
In summary, the CC method approximates an extremely high-dimensional linear problem \cref{FCIi} by a low-dimensional nonlinear problem \cref{TCCi}.

\section{Coupled-Cluster discretization}\label{secccdisc}

Using an appropriate string of creation and annihilation operators, any fermionic state can be changed to any other one (see Part II, or \cite{helgaker2014molecular,solovej2007many}).
In our context, a set of $N$ occupied orbitals is given; its complement is called the set of virtual orbitals.
The action of an excitation operator on a Slater determinant consists of annihilating a few occupied orbitals and creating the same number of virtual orbitals (hence the
particle number $N$ is conserved). A de-excitation operator amounts to the reverse action: annihilating some virtual orbitals and creating the same
number of occupied ones. Obviously, any $N$-particle Slater determinant can be obtained by acting with an appropriate excitation operator on the ``reference state'', which 
is the $N$-particle Slater determinant of all the occupied orbitals. However, it might also be
possible to arrive at the same Slater determinant from another state through successive excitations. The concrete relationships are nontrivial and this section is
devoted to their description.

\subsection{Excitation order}\label{secexord}

Let $\Lambda$ be a countable set called the \emph{orbital set} and let $2^\Lambda$ denote the power set of $\Lambda$. In concrete examples, we will often use the 
numbers $\Lambda=\{1,2,3,\ldots\}$ to label the elements of $\Lambda$ for the sake of simplicity, and set $K=|\Lambda|$. 
Let $N\ge 1$ denote the number of particles,
and set $S=\{\alpha\in 2^\Lambda : |\alpha|=N\}$, the elements of which are called ($N$\emph{-particle}) \emph{states}.
The particle number $N$ is assumed to be fixed throughout.
Fix $M\ge 1$ \emph{reference states} 
$$
\Omega=\{0_1,\ldots,0_M\}\subset S.
$$
For every $m=1,\ldots,M$ define 
$$
L_m=S\setminus(\Omega\setminus\{0_m\})
$$
and on it, the partial order relation
$$
\alpha\preceq_m \beta \quad\Longleftrightarrow\quad \occ{m}{\beta}\subset\occ{m}{\alpha}\quad\text{and}\quad \virt{m}{\alpha}\subset\virt{m}{\beta}
$$
for any $\alpha,\beta\in L_m$, where 
$$
\occ{m}{\alpha}=\alpha\cap 0_m\quad\text{and}\quad
\virt{m}{\alpha}=\alpha\cap (0_m)^c,
$$
and the complement is to be understood relative to $\Lambda$.
 According to commonly used nomenclature, we call $\occ{m}{\alpha}$ the \emph{occupied part of $\alpha$ w.r.t. $0_m$}
and $\virt{m}{\alpha}$ the \emph{virtual part of $\alpha$ w.r.t. $0_m$.} This partial order relation is a generalization of \cite[Definition 4.2]{rohwedder2013continuous}.
By definition, $L_m=\{\alpha\in S : 0_m\preceq_m\alpha\}$ and 
for the sake of convenience, we introduce the notations $\ol{S}=S\setminus\Omega$ and $\ol{L}_m=L_m\setminus\{0_m\}$. 
Note that the reference states are defined \emph{not} to be comparable with respect to $\preceq_m$ with each other.

The partial order $\preceq_m$ generates the \emph{join} and \emph{meet} lattice operations
\begin{align*}
	\alpha\vee_m\beta&=\big(\occ{m}{\alpha}\cap\occ{m}{\beta}\big)\cup\big(\virt{m}{\alpha}\cup\virt{m}{\beta}\big),\\
	\alpha\wedge_m\beta&=\big(\occ{m}{\alpha}\cup\occ{m}{\beta}\big)\cup\big(\virt{m}{\alpha}\cap\virt{m}{\beta}\big),
\end{align*}
for all $\alpha,\beta\in L_m$. 
Furthermore, we introduce the orthocomplementation $\alpha^\perp=\Lambda\setminus\alpha$.

For the so-called \emph{single-reference} (SR) case, $M=1$ and we will make the convention that all the $m$ indices are dropped from the notation. 
For the next result, we extend $\preceq$, $\vee$ and $\wedge$ to the whole $2^\Lambda$.
\begin{proposition}\label{boolean}
The structure $B=(2^\Lambda,\vee,\wedge,0,1,\perp)$ is a Boolean algebra, that is, a distributive, bounded lattice in which the de Morgan laws hold true.
Here, we set $1:=\Lambda$, the identity for $\wedge$.
\end{proposition}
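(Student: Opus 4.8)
The plan is to recognize $B$ as a product of two power-set Boolean algebras, so that every axiom is inherited from standard constructions rather than checked by hand. Write $0$ for the (single) reference state and set $0^\perp=\Lambda\setminus 0$. Consider the map $\iota\colon 2^\Lambda\to 2^0\times 2^{0^\perp}$ defined by $\iota(\alpha)=(\alpha\cap 0,\,\alpha\cap 0^\perp)=(\occ{}{\alpha},\,\virt{}{\alpha})$. Since each $\alpha$ is the disjoint union of its occupied part $\occ{}{\alpha}\subseteq 0$ and its virtual part $\virt{}{\alpha}\subseteq 0^\perp$, the map $\iota$ is a bijection, with inverse $(\sigma,\tau)\mapsto\sigma\cup\tau$.

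First I would check that $\vee$ and $\wedge$ transport across $\iota$ to componentwise operations. Because the occupied and virtual coordinates are carried by the \emph{disjoint} sets $0$ and $0^\perp$, the defining formulas give at once $\iota(\alpha\vee\beta)=(\occ{}{\alpha}\cap\occ{}{\beta},\,\virt{}{\alpha}\cup\virt{}{\beta})$ and $\iota(\alpha\wedge\beta)=(\occ{}{\alpha}\cup\occ{}{\beta},\,\virt{}{\alpha}\cap\virt{}{\beta})$. Thus on the occupied factor $\vee$ acts as intersection and $\wedge$ as union, whereas on the virtual factor $\vee$ acts as union and $\wedge$ as intersection.

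Next I would assemble the conclusion from three standard facts. The virtual factor is the ordinary power-set Boolean algebra on $0^\perp$ (join $=\cup$, meet $=\cap$, bottom $=\emptyset$, top $=0^\perp$, complement $=$ set-complement within $0^\perp$). The occupied factor is the \emph{order-dual} of the power-set Boolean algebra on $0$ (join $=\cap$, meet $=\cup$, bottom $=0$, top $=\emptyset$), and the order-dual of a Boolean algebra is again a Boolean algebra with the \emph{same} complementation. Finally, a product of Boolean algebras is a Boolean algebra under componentwise operations. Hence $\iota$ is an isomorphism of $B$ onto such a product, which delivers the lattice axioms, distributivity, boundedness and the de Morgan laws simultaneously. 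Reading off the distinguished elements, the bottom (identity for $\vee$) is $\iota^{-1}(0,\emptyset)=0$, the reference state, while the top (identity for $\wedge$) is $\iota^{-1}(\emptyset,0^\perp)=0^\perp=\Lambda\setminus 0$. For the complement one computes $\iota(\alpha^\perp)=(0\setminus\occ{}{\alpha},\,0^\perp\setminus\virt{}{\alpha})$, which is precisely the pair of coordinatewise complements; since set-complement is unaffected by the order-reversal on the occupied factor, the orthocomplementation $\perp$ coincides with the complement of the product Boolean algebra, and indeed $\alpha\vee\alpha^\perp=\Lambda\setminus 0$ while $\alpha\wedge\alpha^\perp=0$.

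I do not expect a genuine obstacle: the statement reduces entirely to the bookkeeping above. The one point that repays care is the order-reversal on the occupied coordinate---on $2^0$ the roles of join and meet, and of top and bottom, are swapped relative to the naive subset order---so one must be consistent about this when identifying the bounds and when verifying $\alpha\wedge\alpha^\perp$. A reader preferring a direct argument could instead verify commutativity, associativity, absorption, distributivity and the two complement identities by the same coordinatewise reduction; the product/dual viewpoint simply renders those checks automatic.
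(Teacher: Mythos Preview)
Your argument is correct and clean. The paper actually states this proposition without proof, treating it as a routine verification, so there is nothing to compare against directly. Your identification of $B$ with the product of the order-dual of $(2^{0},\subseteq)$ and $(2^{0^\perp},\subseteq)$ via $\iota(\alpha)=(\occ{}{\alpha},\virt{}{\alpha})$ is the natural conceptual way to obtain all the Boolean-algebra axioms at once.

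One point worth flagging explicitly: your computation correctly gives the top element (the identity for $\wedge$) as $\iota^{-1}(\emptyset,0^\perp)=0^\perp=\Lambda\setminus 0$, \emph{not} $\Lambda$ as the paper states. Indeed, for general $\alpha$ one has $\occ{}{\alpha\wedge\Lambda}=\occ{}{\alpha}\cup 0=0$, so $\alpha\wedge\Lambda\neq\alpha$ unless already $0\subset\alpha$. The assignment $1:=\Lambda$ is thus a slip in the paper; your $1=\Lambda\setminus 0$ is the correct choice, and it is what is implicitly used in the subsequent lemma, where one needs both $\beta^\perp\vee\beta=1$ and $x\wedge 1=x$---identities that hold for $1=0^\perp$ but fail for $1=\Lambda$. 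So your proof not only establishes the proposition but also pinpoints and repairs this small inaccuracy.
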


A similar statement holds true in the \emph{multi-reference} (MR) case, for the individual structures $B_m=(2^\Lambda,\vee_m,\wedge_m,0_m,1,\perp)$.
Even though the algebraic structure on $B$ is nice, the subset $S$ loses this structure.
In fact, $S$ is \emph{not} a sublattice of $B$, since for example $\alpha\vee_m\beta,\alpha\wedge_m\beta\not\in S$ for distinct $\alpha$ and $\beta$
with $\ul{\alpha}=\ul{\beta}=\emptyset$. The reason why we stated \cref{boolean}, however, is because we will exploit the operational rules for $\vee$, $\wedge$ and $\perp$
on a few occasions; for instance, in the following trivial result.
\begin{lemma}\label{abinv}
Let $\gamma,\beta\in 2^\Lambda$ be such that $\beta\preceq_m\gamma$. Then, $\alpha\vee_m\beta=\gamma$ if and only if $\alpha=\beta^\perp\wedge_m\gamma$.
\end{lemma}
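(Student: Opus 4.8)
The plan is to treat the claim purely inside the Boolean algebra $B_m=(2^\Lambda,\vee_m,\wedge_m,0_m,1,\perp)$, whose status as a Boolean algebra is the multireference version of \cref{boolean}. Then the only tools needed are distributivity, the complement laws $\beta\vee_m\beta^\perp=1$ and $\beta\wedge_m\beta^\perp=0_m$, the bound law $1\wedge_m\delta=\delta$, and the order dictionary $\beta\preceq_m\gamma\Leftrightarrow\beta\vee_m\gamma=\gamma\Leftrightarrow\beta\wedge_m\gamma=\beta$. Equivalently, and perhaps more transparently, I would split each set into its two coordinates via $\delta=\occ{m}{\delta}\cup\virt{m}{\delta}$; then $\vee_m$ intersects the occupied coordinate and unites the virtual one, $\wedge_m$ does the opposite, and $\beta^\perp$ complements each coordinate inside $0_m$, resp.\ inside $(0_m)^c$. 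Either way the statement collapses to elementary set algebra.

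For the implication $\alpha=\beta^\perp\wedge_m\gamma\Rightarrow\alpha\vee_m\beta=\gamma$ I would simply substitute and simplify,
\[
(\beta^\perp\wedge_m\gamma)\vee_m\beta=(\beta^\perp\vee_m\beta)\wedge_m(\gamma\vee_m\beta)=1\wedge_m(\gamma\vee_m\beta)=\gamma\vee_m\beta,
\]
and then invoke the hypothesis $\beta\preceq_m\gamma$, which turns $\gamma\vee_m\beta$ into $\gamma$. This direction is immediate and relies only on distributivity, the complement law, and the definition of $\preceq_m$.

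The converse is where the real content lies. Meeting $\alpha\vee_m\beta=\gamma$ with $\beta^\perp$ and distributing, the term $\beta\wedge_m\beta^\perp=0_m$ drops out and leaves
\[
\alpha\wedge_m\beta^\perp=(\alpha\vee_m\beta)\wedge_m\beta^\perp=\gamma\wedge_m\beta^\perp=\beta^\perp\wedge_m\gamma.
\]
Thus the join equation only determines $\alpha$ ``modulo $\beta$'': it fixes the portion of $\alpha$ lying below $\beta^\perp$ (in coordinates it fixes $\occ{m}{\alpha}\cap\occ{m}{\beta}$ and $\virt{m}{\alpha}\cup\virt{m}{\beta}$) but says nothing about the portion of $\alpha$ lying below $\beta$. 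To pass from $\alpha\wedge_m\beta^\perp=\beta^\perp\wedge_m\gamma$ to the desired equality $\alpha=\beta^\perp\wedge_m\gamma$ I must still establish $\alpha=\alpha\wedge_m\beta^\perp$, i.e.\ $\alpha\preceq_m\beta^\perp$ (equivalently $\alpha\wedge_m\beta=0_m$).

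I expect this last step to be the main obstacle, and in fact the crux of the lemma: in any Boolean algebra the solutions of $\alpha\vee_m\beta=\gamma$ with $\beta\preceq_m\gamma$ fill the entire interval $[\beta^\perp\wedge_m\gamma,\gamma]$, of which $\beta^\perp\wedge_m\gamma$ is only the least element (literal uniqueness would force $\beta=0_m$). To secure the stated equivalence I would therefore either read $\beta^\perp\wedge_m\gamma$ as the distinguished minimal solution, or bring in the complementarity $\alpha\wedge_m\beta=0_m$ that is natural in the intended application, where $\alpha$ and $\beta$ encode excitations supported on disjoint orbital sets. Granting that extra relation, the coordinate computation closes the argument at once.
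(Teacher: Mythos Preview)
Your treatment of the implication $\alpha=\beta^\perp\wedge_m\gamma\Rightarrow\alpha\vee_m\beta=\gamma$ is essentially identical to the paper's: substitute, distribute, use $\beta^\perp\vee_m\beta=1$, and finish with $\beta\preceq_m\gamma$.

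Your analysis of the converse is more careful than the paper's. You correctly observe that in any Boolean algebra the equation $\alpha\vee_m\beta=\gamma$ (with $\beta\preceq_m\gamma$) has as solution set the entire interval $[\beta^\perp\wedge_m\gamma,\gamma]$, so literal uniqueness cannot hold without an extra hypothesis such as $\alpha\wedge_m\beta=0_m$. The paper's proof of the converse reads: ``if $\alpha'\vee_m\beta=\gamma$ as well, then $\alpha'\vee_m\beta=\alpha\vee_m\beta$; by joining $\beta^\perp$ to both sides, we get $\alpha'=\alpha$.'' But joining $\beta^\perp$ to both sides yields only $\alpha'\vee_m 1=\alpha\vee_m 1$, i.e.\ $1=1$, which is vacuous. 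A concrete counterexample: with $\Lambda=\{1,2,3,4\}$, $0_m=\{1,2\}$, $\beta=\{1,3\}$, $\gamma=\{3,4\}$, one has $\beta\preceq_m\gamma$ and $\beta^\perp\wedge_m\gamma=\{2,4\}$, yet $\alpha'=\{3,4\}$ also satisfies $\alpha'\vee_m\beta=\gamma$.

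So the gap you flagged is real, and it is present in the paper's argument as well. Your proposed repair --- imposing $\alpha\wedge_m\beta=0_m$, which in coordinates means $\occ{m}{\alpha}\cup\occ{m}{\beta}=0_m$ and $\virt{m}{\alpha}\cap\virt{m}{\beta}=\emptyset$ --- is exactly the condition $(\alpha,\beta)\in\mathcal{L}_m$ from \cref{graphL}. This is precisely the setting of \cref{abinvgraph}, the only place the lemma is invoked, so the downstream use in the paper is unaffected. Your reading of $\beta^\perp\wedge_m\gamma$ as the distinguished minimal solution, together with the disjointness hypothesis coming from $\mathcal{L}_m$, is the correct way to close the argument.
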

\begin{proof}
We have
$$
\alpha\vee_m\beta=(\gamma\wedge_m\beta^{\perp})\vee_m\beta=(\gamma\vee_m\beta)\wedge_m(\beta^{\perp}\vee_m\beta)=(\gamma\vee_m\beta)\wedge_m 1=\gamma\vee_m\beta=\gamma,
$$ 
where in the last step we used $\beta\preceq_m\gamma$. Further, if $\alpha'\vee_m\beta=\gamma$ as well, then $\alpha'\vee_m\beta=\alpha\vee_m\beta$. By joining $\beta^{\perp}$ to
both sides, we get $\alpha'=\alpha$.
\end{proof}

The poset $(L_m,\preceq_m)$ also admits a rank function which makes it a \emph{graded poset}. Being a graded poset means that there is a \emph{rank function} $\rk_m:L_m\to\NN$
satisfies $\rk_m(\alpha)<\rk_m(\beta)$ whenever $\alpha\prec_m\beta$, and $\rk_m(\beta)=\rk_m(\alpha)+1$ if
there is no element $\gamma$ such that $\alpha\prec_m\gamma\prec_m\beta$. The choice $\rk_m(\alpha)=|\virt{m}{\alpha}|$ is easily seen to satisfy the
requirements. Obviously, the maximum value that $\rk_m(\alpha)$ can take is $N$. 
 For a geometric description of the rank function, see \cref{optimalmulti}.

\begin{figure}[ht]\label{mrgraph}
	\centering
\includegraphics[width=0.8\linewidth]{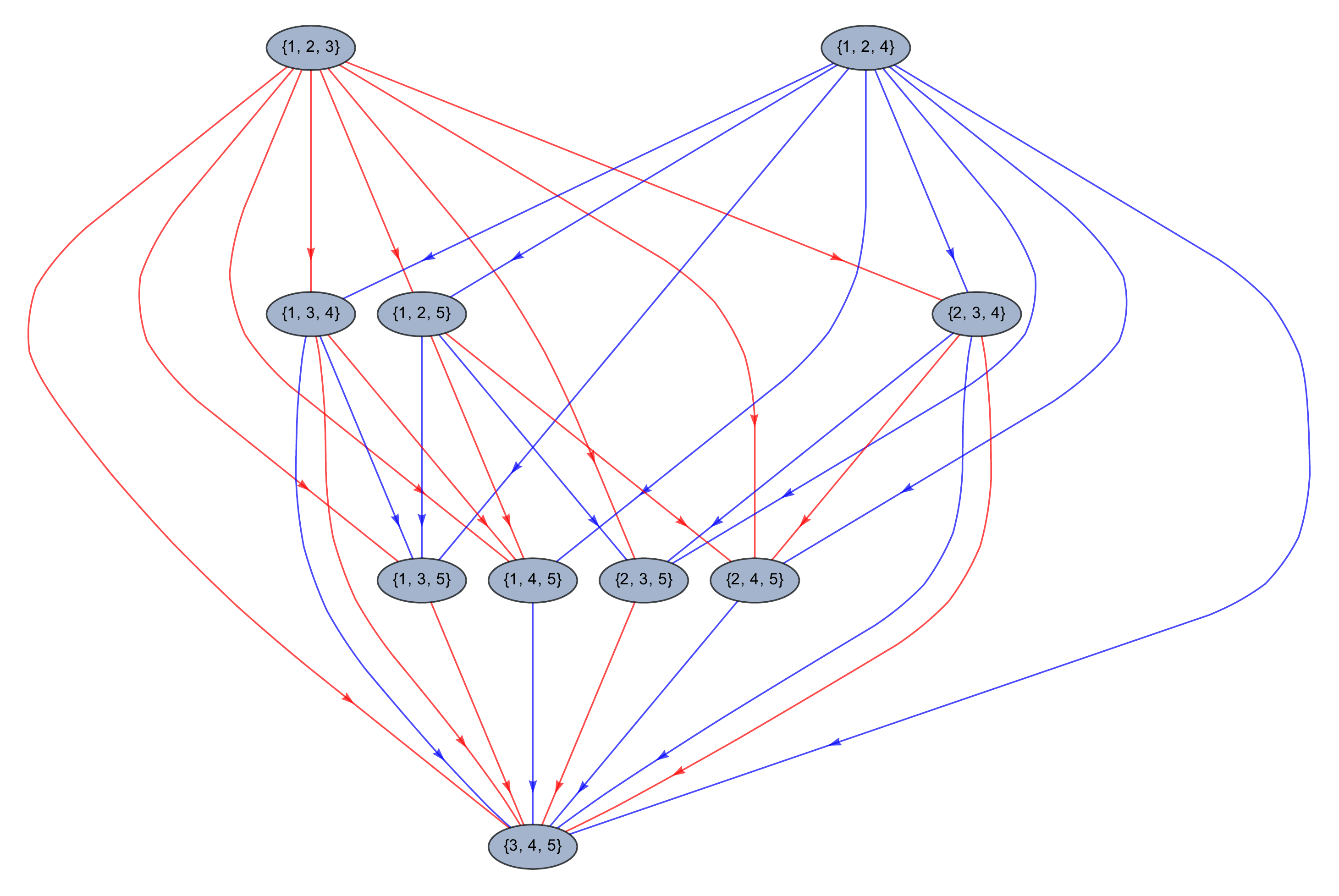}
\caption{Full multi-reference excitation multigraph for $\Lambda=\{1,\ldots,5\}$ and $0_1=\{1,2,3\}$, $0_2=\{1,2,4\}$. 
The edges corresponding to $0_1$ and $0_2$ are shown in red and blue, respectively.}
\end{figure}

\subsection{Excitation graphs}\label{exgrsec}

As we remarked in the previous section, $L_m$ fails to be a sublattice of the Boolean algebra $B_m$. 
Therefore, let us consider pairs $(\alpha,\beta)\in L_m\times L_m$ for which $\alpha\vee_m\beta\in L_m$. In other words, pairs $(\alpha,\beta)\in L_m\times L_m$ for which
$|\occ{m}{\alpha}\cap\occ{m}{\beta}|+|\virt{m}{\alpha}\cup\virt{m}{\beta}|=N$,
or, using the inclusion-exclusion principle $|A\cup B|=|A|+|B|-|A\cap B|$, we can equivalently write
\begin{equation}\label{incexl}
|\occ{m}{\alpha}\cup\occ{m}{\beta}|+|\virt{m}{\alpha}\cap\virt{m}{\beta}|=N,
\end{equation}
since $|\occ{m}{\alpha}|+|\virt{m}{\alpha}|=N$ and $|\occ{m}{\beta}|+|\virt{m}{\beta}|=N$ by hypothesis.
While still $\alpha\vee_m\beta\in L_m$ even in the case $\virt{m}{\alpha}\cap\virt{m}{\beta}\neq\emptyset$, we wish to avoid that possibility on  physical grounds.
Namely, such an operation would introduce a repeated virtual orbital
in the state $\alpha\vee_m\beta$ and that is not allowed on the account of the Pauli exclusion principle. We note in passing that 
$\alpha\vee_m\beta\in L_m$ is equivalent to $\alpha\wedge_m\beta\in L_m$.  
In conclusion, we restrict our attention to the set 
\begin{equation}\label{graphL}
\mathcal{L}_m=\{ (\alpha,\beta)\in L_m\times L_m: \;|\occ{m}{\alpha}\cup\occ{m}{\beta}|=N\; \
\text{and}\; |\virt{m}{\alpha}\cap\virt{m}{\beta}|=0 \}.
\end{equation}
Hence, if $(\alpha,\beta)\in\mathcal{L}_m$, then we have $\alpha\vee_m\beta\in L_m$. The set $\mathcal{L}_m$ is symmetric to the diagonal 
(which it does not contain), i.e. $(\alpha,\beta)\in\mathcal{L}_m$ iff $(\beta,\alpha)\in\mathcal{L}_m$ and $(\alpha,\alpha)\not\in\mathcal{L}_m$. Also, $(0_m,\alpha),(\alpha,0_m)\in\mathcal{L}_m$ for any $\alpha\in L_m$. 
Furthermore,
the rank function $\rk_m$ is additive on $\mathcal{L}_m$ in the sense that
$$
\rk_m(\alpha\vee_m\beta)=\rk_m(\alpha)+\rk_m(\beta),
$$
for any $(\alpha,\beta)\in \mathcal{L}_m$.  This property may also seen to be a reason why we want to exclude the case $\virt{m}{\alpha}\cap\virt{m}{\beta}\neq\emptyset$.
Indeed, it could also be taken as the \emph{definition} of $\mathcal{L}_m$.
\begin{proposition}\label{Lmreform}
The set $\mathcal{L}_m$ can be written as
$$
\mathcal{L}_m=\{(\alpha,\beta)\in L_m\times L_m : \alpha\vee_m\beta\in L_m \;\mathrm{and}\; \rk_m(\alpha\vee_m\beta)=\rk_m(\alpha)+\rk_m(\beta) \}.
$$
\end{proposition}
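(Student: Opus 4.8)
The plan is to show that the two conditions defining $\mathcal{L}_m$ in \cref{graphL} match, one-to-one, the two conditions in the asserted reformulation, by separating the ``virtual'' bookkeeping from the ``occupied'' bookkeeping. First I would read off the occupied and virtual parts of the join from its explicit formula. Since $\occ{m}{\alpha}\cap\occ{m}{\beta}\subset 0_m$ while $\virt{m}{\alpha}\cup\virt{m}{\beta}\subset(0_m)^c$, intersecting $\alpha\vee_m\beta=(\occ{m}{\alpha}\cap\occ{m}{\beta})\cup(\virt{m}{\alpha}\cup\virt{m}{\beta})$ with $0_m$ and with its complement gives $\occ{m}{\alpha\vee_m\beta}=\occ{m}{\alpha}\cap\occ{m}{\beta}$ and $\virt{m}{\alpha\vee_m\beta}=\virt{m}{\alpha}\cup\virt{m}{\beta}$. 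In particular, by the choice $\rk_m(\cdot)=|\virt{m}{\cdot}|$, we have $\rk_m(\alpha\vee_m\beta)=|\virt{m}{\alpha}\cup\virt{m}{\beta}|$.

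The key step is a single application of the inclusion-exclusion principle to the virtual parts: $|\virt{m}{\alpha}\cup\virt{m}{\beta}|=|\virt{m}{\alpha}|+|\virt{m}{\beta}|-|\virt{m}{\alpha}\cap\virt{m}{\beta}|$. Combined with the previous computation, this shows that the rank-additivity condition $\rk_m(\alpha\vee_m\beta)=\rk_m(\alpha)+\rk_m(\beta)$ holds precisely when $|\virt{m}{\alpha}\cap\virt{m}{\beta}|=0$. Thus the rank condition in the reformulation is identical, with no extra hypotheses, to the second defining condition of $\mathcal{L}_m$.

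Finally I would fold in the membership condition. The text already records that $\alpha\vee_m\beta\in L_m$ is equivalent to \cref{incexl}, i.e. to $|\occ{m}{\alpha}\cup\occ{m}{\beta}|+|\virt{m}{\alpha}\cap\virt{m}{\beta}|=N$. Hence the pair (membership, rank additivity) reads as the two equations $|\occ{m}{\alpha}\cup\occ{m}{\beta}|+|\virt{m}{\alpha}\cap\virt{m}{\beta}|=N$ and $|\virt{m}{\alpha}\cap\virt{m}{\beta}|=0$; since the second forces the intersection term to vanish, this is equivalent to the pair $|\occ{m}{\alpha}\cup\occ{m}{\beta}|=N$ and $|\virt{m}{\alpha}\cap\virt{m}{\beta}|=0$, which is exactly \cref{graphL}. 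Conversely, the two conditions of $\mathcal{L}_m$ force the left side of \cref{incexl} to equal $N$ (membership) while the vanishing intersection supplies rank additivity. I expect no real obstacle: the statement is essentially bookkeeping, and the text itself notes that the additivity could be taken as the definition. The only points needing care are reading off $\virt{m}{\alpha\vee_m\beta}$ correctly from the join formula and keeping the two conditions coupled, so that the disjointness of the virtual parts is available exactly when converting between the $L_m$-membership form and the occupied-union cardinality.
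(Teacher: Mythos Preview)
Your proposal is correct and follows essentially the same route as the paper: both use inclusion--exclusion on the virtual parts to identify rank additivity with $|\virt{m}{\alpha}\cap\virt{m}{\beta}|=0$, and both invoke \cref{incexl} to link $\alpha\vee_m\beta\in L_m$ to the occupied-union cardinality. The only difference is that you spell out explicitly why $\virt{m}{\alpha\vee_m\beta}=\virt{m}{\alpha}\cup\virt{m}{\beta}$, whereas the paper absorbs the forward inclusion into ``clear from the above'' and writes out only the reverse direction.
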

\begin{proof}
Let $\mathcal{L}_m'$ denote the set on the right hand side of the preceding equation. Then, it is clear from the above that $\mathcal{L}_m\subset\mathcal{L}_m'$. Conversely,
suppose that $(\alpha,\beta)\in\mathcal{L}_m'$. Then, $|\virt{m}{\alpha}\cup\virt{m}{\beta}|=|\virt{m}{\alpha}|+|\virt{m}{\beta}|$, from which $|\virt{m}{\alpha}\cap\virt{m}{\beta}|=0$
using the inclusion-exclusion principle.
Since $\alpha\vee_m\beta\in L_m$, \cref{incexl} holds true, and 
we have that $|\occ{m}{\alpha}\cup\occ{m}{\beta}|=N$, so $(\alpha,\beta)\in \mathcal{L}_m$. Hence, $\mathcal{L}_m\supset\mathcal{L}_m'$.
\end{proof}

The set $\mathcal{L}_m$ is used for our main definition.

\begin{definition}\label{exgraphdef}
The digraph $G_m^{\full}=(L_m,E_m^{\full})$ is called the \emph{full (SR) excitation graph w.r.t. $0_m$}, where
$$
E_m^{\full}=\{(\beta,\alpha\vee_m\beta)\in L_m\times L_m : (\alpha,\beta)\in \mathcal{L}_m, \;\alpha\neq 0_m \}.
$$
A subgraph $G_m=(L_m,E_m)$, $E_m\subset E_m^\full$ is said to be an \emph{(SR) excitation (sub)graph w.r.t. $0_m$}.
\end{definition}
Notice that we excluded $\alpha=0_m$ to omit loop edges. \cref{abinv} can be refined in the following manner.
\begin{lemma}\label{abinvgraph}
Let $(\beta,\gamma)\in E_m^{\full}$. Then $\alpha=\beta^\perp\wedge_m\gamma\in L_m$ is the unique $\alpha$ such that $\alpha\vee_m\beta=\gamma$.
\end{lemma}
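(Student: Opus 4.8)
The plan is to deduce the lemma from \cref{abinv} together with the way edges of $E_m^{\full}$ are defined, so that no new cardinality bookkeeping is needed. First I would unpack the hypothesis: by \cref{exgraphdef}, the assumption $(\beta,\gamma)\in E_m^{\full}$ means there is a witness $\alpha_0\in L_m$ with $\alpha_0\neq 0_m$, $(\alpha_0,\beta)\in\mathcal{L}_m$, and $\gamma=\alpha_0\vee_m\beta$. In particular $\beta,\gamma\in L_m$ and $\gamma$ is realized concretely as a join against $\beta$.

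The second step is to verify the single hypothesis needed to invoke \cref{abinv}, namely $\beta\preceq_m\gamma$. This is immediate: since $\preceq_m$ is the order of the lattice $B_m$ of \cref{boolean}, the join $\gamma=\alpha_0\vee_m\beta$ is an upper bound for $\beta$, whence $\beta\preceq_m\gamma$. Equivalently, one reads off $\occ{m}{\gamma}=\occ{m}{\alpha_0}\cap\occ{m}{\beta}\subset\occ{m}{\beta}$ and $\virt{m}{\beta}\subset\virt{m}{\alpha_0}\cup\virt{m}{\beta}=\virt{m}{\gamma}$, which is precisely the defining condition of $\beta\preceq_m\gamma$.

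With $\beta\preceq_m\gamma$ in hand, \cref{abinv} applies to the pair $\gamma,\beta\in 2^\Lambda$ and yields the equivalence $\alpha\vee_m\beta=\gamma\Longleftrightarrow\alpha=\beta^\perp\wedge_m\gamma$. This already delivers two of the three assertions: the element $\beta^\perp\wedge_m\gamma$ solves $\alpha\vee_m\beta=\gamma$, and it is the \emph{only} solution in $2^\Lambda$, hence a fortiori the unique solution within $L_m$.

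The remaining, and genuinely new, claim is the membership $\beta^\perp\wedge_m\gamma\in L_m$, since \cref{abinv} only locates this element inside $2^\Lambda$. The key observation, and the one point where I expect to need more than \cref{abinv}, is that the witness $\alpha_0$ from the first step is itself a solution of $\alpha\vee_m\beta=\gamma$; by the uniqueness just established it must coincide with $\beta^\perp\wedge_m\gamma$. Therefore $\beta^\perp\wedge_m\gamma=\alpha_0\in L_m$, which completes the proof. The mild subtlety is exactly this last bridge: rather than checking $|\beta^\perp\wedge_m\gamma|=N$ and $\beta^\perp\wedge_m\gamma\notin\Omega\setminus\{0_m\}$ by direct set-counting, I exploit that the very definition of $E_m^{\full}$ furnishes a lawful witness in $L_m$, and uniqueness forces the closed-form expression to equal it.
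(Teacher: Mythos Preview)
Your proof is correct and follows essentially the same approach as the paper: invoke \cref{abinv} for uniqueness and the closed-form expression, then use the witness $\alpha_0\in L_m$ guaranteed by the definition of $E_m^{\full}$ together with uniqueness to conclude $\beta^\perp\wedge_m\gamma\in L_m$. You are in fact slightly more careful than the paper, explicitly verifying the hypothesis $\beta\preceq_m\gamma$ needed for \cref{abinv}, which the paper leaves implicit.
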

\begin{proof}
Using \cref{abinv}, we can uniquely solve the equation $\alpha\vee_m\beta=\gamma$ for $\alpha$
to obtain $\alpha=\beta^\perp\wedge_m\gamma\in 2^\Lambda$. Therefore, $(\beta,\alpha\vee_m\beta)\in E_m^{\full}$, which implies that $\alpha\in L_m$
using the definition of $E_m^\full$.
\end{proof}
\begin{corollary}\label{noparallel}
The digraph $G_m^{\full}$ does not contain parallel edges.
\end{corollary}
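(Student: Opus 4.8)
The plan is to read this corollary off directly from the uniqueness already established in \cref{abinvgraph}. Before doing so I would make precise what \emph{parallel edge} means in this context: although $E_m^{\full}$ is written as a set of ordered pairs in $L_m\times L_m$, the construction in \cref{exgraphdef} is genuinely a multigraph, since each edge $(\beta,\alpha\vee_m\beta)$ carries the additional datum of the excitation $\alpha$ that produced it (this is the multigraph displayed in \cref{mrgraph}). Two edges are therefore parallel precisely when they share the same source and the same target while arising from distinct excitations $\alpha\neq\alpha'$. The assertion to prove is that this situation cannot occur.

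First I would take two edges of $G_m^{\full}$ with common source $\beta$ and common target $\gamma$, say produced by excitations $\alpha$ and $\alpha'$. By \cref{exgraphdef} this means $(\alpha,\beta),(\alpha',\beta)\in\mathcal{L}_m$ with $\alpha,\alpha'\neq 0_m$ and $\alpha\vee_m\beta=\gamma=\alpha'\vee_m\beta$. In particular $(\beta,\gamma)\in E_m^{\full}$, so \cref{abinvgraph} applies and yields a \emph{unique} solution $\alpha=\beta^\perp\wedge_m\gamma\in L_m$ of the equation $\alpha\vee_m\beta=\gamma$.

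The conclusion is then immediate: both $\alpha$ and $\alpha'$ solve $\,\cdot\,\vee_m\beta=\gamma$, so uniqueness forces $\alpha=\alpha'$, whence the two edges coincide. Hence no two distinct edges of $G_m^{\full}$ share both endpoints, which is exactly the claim. There is no real obstacle here, since the content has been front-loaded into \cref{abinv} and \cref{abinvgraph}, where the lattice-theoretic inversion $\alpha=\beta^\perp\wedge_m\gamma$ is carried out. The only point requiring a moment's care is the conceptual one of spelling out that the object is a multigraph and that \emph{no parallel edges} is precisely the statement that the edge multiplicity over each source--target pair is at most one; once that is agreed, the proof is a one-line appeal to uniqueness.
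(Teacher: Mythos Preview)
Your proof is correct and is exactly the argument the paper intends: the corollary is stated immediately after \cref{abinvgraph} with no separate proof, so the paper is implicitly invoking precisely the uniqueness of $\alpha=\beta^\perp\wedge_m\gamma$ that you spell out. Your added clarification of what ``parallel edges'' means here (edges labeled by the excitation $\alpha$, so that the statement is that the label is uniquely determined by the source--target pair) is a useful gloss on an otherwise terse corollary.
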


Various graph-theoretic quantities of the single-reference excitation graph are calculated in \cref{appgraph}.

A digraph $G=(V,E)$ is said to be \emph{transitive} if $(u,v)\in E$ and $(v,w)\in E$ imply $(u,w)\in E$.
It follows by induction that, if $G$ is transitive, and whenever $G$ contains a directed path $((v_0,v_1),(v_1,v_2),\ldots,(v_{n-1},v_n))$, then
$(v_0,v_n)\in E$.

\begin{proposition}\label{gfulltrans}
The digraph $G_m^\full$ is transitive.
\end{proposition}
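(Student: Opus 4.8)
The plan is to verify transitivity directly from \cref{exgraphdef}: given edges $(\beta,\gamma)\in E_m^\full$ and $(\gamma,\delta)\in E_m^\full$, I must produce a single label $\alpha\neq 0_m$ with $(\alpha,\beta)\in\mathcal{L}_m$ and $\alpha\vee_m\beta=\delta$, so that $(\beta,\delta)=(\beta,\alpha\vee_m\beta)\in E_m^\full$. The first observation is that every edge is monotone for $\preceq_m$ and strictly rank-increasing. Indeed, if $(\beta,\gamma)\in E_m^\full$ then $\gamma=\alpha_1\vee_m\beta$ for some $(\alpha_1,\beta)\in\mathcal{L}_m$ with $\alpha_1\neq 0_m$; hence $\beta\preceq_m\gamma$ because the join is an upper bound (concretely $\occ{m}{\gamma}=\occ{m}{\alpha_1}\cap\occ{m}{\beta}\subseteq\occ{m}{\beta}$ and $\virt{m}{\beta}\subseteq\virt{m}{\gamma}$), and the rank additivity of \cref{Lmreform} together with $\rk_m(\alpha_1)=|\virt{m}{\alpha_1}|\ge 1$ gives $\rk_m(\gamma)=\rk_m(\alpha_1)+\rk_m(\beta)>\rk_m(\beta)$.

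Applying this to both edges and using transitivity of the partial order $\preceq_m$, I would obtain $\beta\preceq_m\delta$ together with $\rk_m(\delta)>\rk_m(\gamma)>\rk_m(\beta)$. Since $\beta\preceq_m\delta$, \cref{abinv} singles out the unique candidate $\alpha:=\beta^\perp\wedge_m\delta$, which already satisfies $\alpha\vee_m\beta=\delta$. It then remains to check the conditions of \cref{graphL} for the pair $(\alpha,\beta)$, and this is pure Boolean-algebra bookkeeping in $B_m$ (\cref{boolean}). Computing the occupied and virtual parts yields $\occ{m}{\alpha}=(0_m\setminus\occ{m}{\beta})\cup\occ{m}{\delta}$ and $\virt{m}{\alpha}=\virt{m}{\delta}\setminus\virt{m}{\beta}$, where $\beta\preceq_m\delta$ is used through $\occ{m}{\delta}\subseteq\occ{m}{\beta}$ and $\virt{m}{\beta}\subseteq\virt{m}{\delta}$. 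From these one reads off $\virt{m}{\alpha}\cap\virt{m}{\beta}=\emptyset$ and $\occ{m}{\alpha}\cup\occ{m}{\beta}=0_m$, so $|\occ{m}{\alpha}\cup\occ{m}{\beta}|=N$; moreover $\rk_m(\alpha)=|\virt{m}{\delta}|-|\virt{m}{\beta}|=\rk_m(\delta)-\rk_m(\beta)\ge 1$, so $\alpha\neq 0_m$ (recall $\rk_m(0_m)=0$). Counting sizes also gives $|\alpha|=|\occ{m}{\alpha}|+|\virt{m}{\alpha}|=N$, hence $\alpha\in S$.

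The one genuinely delicate point --- and the step I expect to be the main obstacle --- is the containment $\alpha\in L_m$, which must be secured before $(\alpha,\beta)\in\mathcal{L}_m\subset L_m\times L_m$ may be asserted. In the single-reference situation this is free: there $L_m=S$ and I have already shown $\alpha\in S$, so the two set-size identities above immediately place $(\alpha,\beta)\in\mathcal{L}_m$, whence $(\beta,\delta)\in E_m^\full$ and transitivity follows. The remaining work is thus precisely to confirm that the difference state $\alpha=\beta^\perp\wedge_m\delta$ does not escape $L_m$; I would treat this with care, since it is exactly here that the excision of the other reference states in the definition $L_m=S\setminus(\Omega\setminus\{0_m\})$ interacts with the construction.
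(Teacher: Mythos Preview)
Your route and the paper's are essentially the same at the core, but packaged differently. The paper keeps the two edge labels, say $\alpha_1$ and $\alpha_2$, forms the composite label $\alpha_1\vee_m\alpha_2$ directly, and uses associativity of $\vee_m$ together with \cref{Lmreform} (rank additivity) to place the pair $(\gamma_0,\alpha_1\vee_m\alpha_2)$ into $\mathcal{L}_m$ in one line. You instead pass through the order relation $\beta\preceq_m\gamma\preceq_m\delta$, invoke \cref{abinv} to recover the candidate as $\alpha=\beta^\perp\wedge_m\delta$, and then verify the two defining conditions of \cref{graphL} by hand. By the uniqueness in \cref{abinvgraph} these two candidates coincide, so the arguments are equivalent; the paper's is shorter, while yours exposes the set-theoretic mechanics and singles out the membership $\alpha\in L_m$ as the only point needing care.

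On that last point your caution is justified, and in fact the obstacle is genuine rather than merely ``delicate''. For $M\ge 2$ the excision $L_m=S\setminus(\Omega\setminus\{0_m\})$ can bite: take $\Lambda=\{1,\ldots,6\}$, $N=3$, $0_1=\{1,2,3\}$, $0_2=\{1,4,5\}$, and set $\beta=\{2,3,6\}$, $\gamma=\{2,4,6\}$, $\delta=\{4,5,6\}$. One checks that $(\beta,\gamma)\in E_1^\full$ with label $\{1,2,4\}$ and $(\gamma,\delta)\in E_1^\full$ with label $\{1,3,5\}$, yet the unique candidate label for $(\beta,\delta)$ is $\beta^\perp\wedge_1\delta=\{1,4,5\}=0_2\notin L_1$, so $(\beta,\delta)\notin E_1^\full$. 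Thus the step you flag cannot be completed when $M\ge 2$; the paper's proof passes over the same point, since the appeal to \cref{Lmreform} tacitly requires $\alpha_1\vee_m\alpha_2\in L_m$. In the single-reference case $M=1$ one has $L_m=S$, and both your argument and the paper's are complete.
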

\begin{proof}
Suppose that $(\gamma_0,\gamma_1)\in E^\full_m$ and $(\gamma_1,\gamma_2)\in E^\full_m$. Then there exists $\alpha$ and $\beta$ such that $\gamma_1=\alpha\vee_m\gamma_0$
and $\gamma_2=\beta\vee_m\gamma_1$. Since $\gamma_2=(\alpha\vee_m\beta)\vee\gamma_0$ by the associativity of $\vee_m$, it follows easily from \cref{Lmreform} that
$(\gamma_0,\alpha\vee_m\beta)\in \mathcal{L}_m$. Therefore, 
$$
(\gamma_0,\gamma_2)=(\gamma_0,(\alpha\vee_m\beta)\vee_m\gamma_0)\in E^\full_m,
$$
 which is what we wanted to show.
\end{proof}

Transitivity of certain subgraphs, and of $G_m^\full$ itself will come up later, since vaguely speaking this property will imply the algebraic
closedness of the set of excitation operators that we attach to the edges (see \cref{exop} and \cref{clustop}).

We label the edges of $G_m^{\full}$ with their corresponding $\alpha$. Thus, to every directed edge $(\beta,\alpha\vee_m\beta)\in E_m^{\full}$ there
corresponds a map $x_{m,\alpha}:L_m\to L_m$ defined with the instruction $x_{m,\alpha}(\beta)=\alpha\vee_m\beta$. This way, the digraph $G_m^{\full}$ may be interpreted as a commutative diagram 
(cf. \cref{exop}).
Note that a label $x_{m,\alpha}$ may appear on multiple edges. 

Furthermore, for any subgraph $G_m=(L_m,E_m)$, we introduce the
\emph{set of excitations $\Xi(G_m)\subset \ol{L}_m$ of $G_m$} via
\begin{equation}\label{xig}
\Xi(G_m)=\{\alpha\in \ol{L}_m : (\beta,\alpha\vee_m\beta)\in E_m \;\text{for some}\; \beta\in L_m\}.
\end{equation}
Note that the excitations are indexed with the same set $L_m$ as the states themselves, but in general $\Xi(G_m)\neq \ol{L}_m$.
Nonetheless, for the full excitation graph $G^\full_m$, we have in fact $\Xi(G_m^\full)=\ol{L}_m$.

The reason why explicitly stated that we are considering the ``full'' excitation graphs
is that, in practice, one is forced to ignore the ``degree of freedom'' (called ``cluster amplitudes'', see \cref{secamp}) corresponding to some edges.\footnote{
Note that the vertex set is still the ``full'' vertex set $L_m$---some vertices might become isolated. }
This is done by considering certain subsets of the full edge set $E_m^{\full}$.

\begin{definition}\label{exsub}
An excitation subgraph $G_m=(L_m,E_m)$ is said to be a \emph{consistent subgraph (of $G_m^\full$)} if $E_m\subset E_m^\full$, and
whenever $(\beta,\alpha\vee_m\beta)\in E_m$ for some $\beta\in L_m$ and $\alpha\in L_m$, then $(\beta',\alpha\vee_m\beta')\in E_m$ for all $\beta'\in L_m$.
\end{definition}

The consistency criterion can be rephrased as follows: for a fixed $\alpha\in\Xi(G_m)$, \emph{either} $E_m$ contains the whole ``orbit'' 
$\{(\beta,\alpha\vee_m\beta)\in E_m : \beta\in L_m\}$ \emph{or} it does not contain it at all. Note that the set $\Xi(G_m)$ can equally well be
used to define a consistent subgraph.

\begin{definition}\label{truncgraph}
For a given $r=1,\ldots,N$, define $G_m(r)=(L_m,E_m(r))$, where
$$
E_m(r)=\{(\beta,\alpha\vee_m\beta)\in E_m^\full : \beta\in L_m,\;\alpha\in\ol{L}_m\;\text{such that}\; \rk_m(\alpha)=r  \}.
$$
The subgraph $G_m(r_1,\ldots,r_\rho)=(L_m, E_m(r_1,\ldots,r_\rho))$ is called a \emph{rank-truncated excitation subgraph} if
$$
E_m(r_1,\ldots,r_\rho)=E_m(r_1)\cup\ldots\cup E_m(r_\rho)\quad\text{ for} \quad  r_1,\ldots,r_\rho\in\{1,\ldots,N\}.
$$
We refer to $G_m(1)$, $G_m(1,2)$, $G_m(1,2,3)$, etc. more colloquially as $G_m(\mathrm{S})$, $G_m(\mathrm{SD})$, $G_m(\mathrm{SDT})$, etc. 
\end{definition}

Rank-truncation does not introduce isolated vertices in $G_m(r_1,\ldots,r_\rho)$ as long as one of the $r_j$'s is 1. However, in the doubles (D) case,
$G(\textrm{D})$ \emph{does} in fact produce isolated vertices so that vertices of odd rank cannot be reached.
Also, note that these truncated subgraphs like $G_m(\mathrm{S})$ and $G_m(\mathrm{SD})$ are \emph{not} transitive in general.

We shall summarize these observations in the next theorem. Recall that a digraph is said to be \emph{weakly connected} if every pair of vertices has an undirected path between them.

\begin{theorem}\label{rkgraphthm}
Let $G_m=G_m(r_1,\ldots,r_\rho)$ be a rank-truncated excitation subgraph. Then the following is true.
\begin{enumerate}[label=\normalfont(\roman*)]
\item $G_m$ is a consistent subgraph.
\item $G_m$ is weakly connected if one of the $r_j$'s is 1.
\end{enumerate}
\end{theorem}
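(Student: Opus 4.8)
The plan is to handle the two assertions separately, both resting on the fact (\cref{abinvgraph} together with \cref{noparallel}) that every edge $(\beta,\gamma)\in E_m^\full$ carries a unique \emph{label} $\alpha=\beta^\perp\wedge_m\gamma\in\ol{L}_m$, so that one may speak unambiguously of the label of an edge and of the rank of that label.

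For part (i) I would observe that membership of an edge in $E_m(r_1,\ldots,r_\rho)$ is governed \emph{solely} by its label. Unwinding \cref{truncgraph}, an edge $(\beta,\gamma)\in E_m^\full$ lies in $E_m(r_j)$ precisely when its label $\alpha$ satisfies $\rk_m(\alpha)=r_j$, and hence it lies in $E_m(r_1,\ldots,r_\rho)=E_m(r_1)\cup\cdots\cup E_m(r_\rho)$ precisely when $\rk_m(\alpha)\in\{r_1,\ldots,r_\rho\}$. In particular the excitation set is $\Xi(G_m)=\{\alpha\in\ol{L}_m:\rk_m(\alpha)\in\{r_1,\ldots,r_\rho\}\}$, and whether a given $\alpha$ belongs to it is independent of the chosen $\beta$. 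Consequently, if one edge $(\beta,\alpha\vee_m\beta)$ with label $\alpha$ is present, then every edge of $E_m^\full$ carrying that same label is present as well; this is exactly the consistency criterion of \cref{exsub} in its ``orbit'' formulation, which proves (i).

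For part (ii) I would first reduce to the singles graph: if some $r_j=1$ then $E_m(1)\subseteq E_m(r_1,\ldots,r_\rho)$ on the common vertex set $L_m$, and adjoining edges cannot destroy weak connectivity, so it suffices to show that $G_m(1)$ is weakly connected. I would prove that every $\beta\in L_m$ is joined to $0_m$ by an undirected path of rank-$1$ edges, by induction on $\rk_m(\beta)$. The base case $\rk_m(\beta)=0$ forces $\virt{m}{\beta}=\emptyset$, i.e. $\beta\subseteq 0_m$, hence $\beta=0_m$. For the step with $\rk_m(\beta)=k\ge 1$, I would pick a virtual orbital $v\in\virt{m}{\beta}$ and, using $|0_m\setminus\beta|=k\ge 1$, a hole $o\in 0_m\setminus\beta$, and set $\beta''=(\beta\setminus\{v\})\cup\{o\}$. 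A short computation gives $\virt{m}{\beta''}=\virt{m}{\beta}\setminus\{v\}$ and $\occ{m}{\beta''}=\occ{m}{\beta}\cup\{o\}$, so $\beta''\preceq_m\beta$ and $\rk_m(\beta'')=k-1$. By \cref{abinv} the unique solution of $\alpha\vee_m\beta''=\beta$ is $\alpha=(\beta'')^\perp\wedge_m\beta=(0_m\setminus\{o\})\cup\{v\}$, which has $\rk_m(\alpha)=1$; since $\alpha\vee_m\beta''=\beta\in L_m$ and $\rk_m(\beta)=\rk_m(\alpha)+\rk_m(\beta'')$, \cref{Lmreform} yields $(\alpha,\beta'')\in\mathcal{L}_m$, hence $(\beta'',\beta)\in E_m(1)$. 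The induction hypothesis connects $\beta''$ to $0_m$, and therefore so is $\beta$.

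The main obstacle lies entirely in the multireference case: the descending step must keep both $\beta''$ and its label $\alpha$ inside $L_m=S\setminus(\Omega\setminus\{0_m\})$, i.e. it must avoid the remaining reference states. In the single-reference case $M=1$ one has $L_m=S$ and the argument above is airtight. For $M>1$ I would exploit the freedom in the pair $(v,o)$: there are $k^2$ candidate down-neighbours $\beta''$, and each excluded reference coincides with at most one of them (it must differ from $\beta$ by a single occupied--virtual swap, which pins down $v$ and $o$). Selecting a descent that stays in $L_m$ uniformly, and simultaneously controlling the label $\alpha$, is the delicate point on which I would concentrate; if a clean unconditional choice is unavailable one would isolate a mild nondegeneracy hypothesis on $\Omega$ guaranteeing enough admissible neighbours at each rank level.
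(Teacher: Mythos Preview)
The paper's own proof reads, in its entirety, ``Obvious from the definition.'' Your write-up is therefore far more detailed than anything the paper supplies, and there is essentially nothing to compare against on the level of argument.

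Your proof of (i) is correct and is exactly the intended content: membership in $E_m(r_1,\ldots,r_\rho)$ depends only on the label $\alpha$ of an edge, which is precisely the ``orbit'' formulation of consistency in \cref{exsub}.

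Your proof of (ii) is correct and complete in the single-reference case $M=1$, and your reduction to $G_m(1)$ together with the rank-descent induction is the natural route. More importantly, your caution about the multireference case is well founded and not merely a technicality you failed to close: the statement as written can actually \emph{fail} when $M>1$. For instance, take $N=2$, $K=4$, $\Omega=\{\{1,2\},\{1,3\},\{1,4\}\}$ and $m=1$. Then $L_1=\{\{1,2\},\{2,3\},\{2,4\},\{3,4\}\}$, and every potential rank-$1$ edge incident to $\{3,4\}$ is excluded from $E_1^\full$ because either its endpoint or its label coincides with $0_2$ or $0_3$; hence $\{3,4\}$ is isolated in $G_1(1)$. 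So the obstacle you flag is a genuine gap in the theorem rather than in your proof, and your suggestion that a mild nondegeneracy hypothesis on $\Omega$ is needed is the right diagnosis. In the single-reference setting (which is the one the paper mostly has in mind, cf.\ the remark preceding the theorem) your argument is airtight.
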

\begin{proof}
Obvious from the definition. 
\end{proof}

Next, we briefly consider two rather ``exotic'' CC-like methods to demonstrate the generality of the excitation graph concept.

\begin{example}\label{tccgraph}
The excitation graph corresponding to the Tailored CC method (see e.g. \cite{faulstich2019analysis}) can be described as follows. In this SR method ($M=1$), the orbital set
$\Lambda$ is partitioned
according to $\Lambda_{\mathrm{CAS}}=\{1,\ldots,N,N+1,\ldots,k\}$ and $\Lambda_{\mathrm{ext}}=\Lambda\setminus\Lambda_{\mathrm{CAS}}$ for some $k=N,\ldots,|\Lambda|$.
This induces a splitting $L=L(\mathrm{CAS})\dot{\cup} L(\mathrm{ext})$, where
$$
L(\mathrm{CAS})=\{\alpha\in L : \alpha\subset \Lambda_{\mathrm{CAS}}\}, \quad L(\mathrm{ext})=L\setminus L(\mathrm{CAS}).
$$
Furthermore, the edge set $E^{\full}$ may also be split accordingly
$$
E(\mathrm{CAS})=\{(\beta,\alpha\vee\beta)\in E^{\full} : \alpha\subset \Lambda_{\mathrm{CAS}}, \; \beta\in L \},\quad\text{and}\quad E(\mathrm{ext})=E^{\full}\setminus E(\mathrm{CAS}).
$$
In other words, $E(\mathrm{CAS})$ contains excitations which change CAS occupied orbitals to CAS virtual ones,
and as such, no edge in $E(\mathrm{CAS})$ leaves $L(\mathrm{CAS})$ that starts from $L(\mathrm{CAS})$.
It is easy to see that both $G(\mathrm{CAS})=(L,E(\mathrm{CAS}))$ and $G(\mathrm{ext})=(L,E(\mathrm{ext}))$ are transitive and consistent subgraphs.
\end{example}

\begin{example}\label{kowalskigraph}
A generalization of $E(\text{CAS})$ is the ``CAS-type subalgebra'' (denoted as ``$\mathfrak{g}^{(N)}(R,S)$'' in \cite{kowalski2018properties}), which is constructed from
two given subsets $\Lambda_R\subset \{1,\ldots,N\}$ and $\Lambda_S\subset\{N+1,\ldots\}$. Define $\Lambda_{\mathrm{int}}=\Lambda_R \dot{\cup} \Lambda_S$
and $\Lambda_{\mathrm{ext}}=\Lambda\setminus \Lambda_{\mathrm{int}}$. This induces a splitting $L=L(\mathrm{int})\dot{\cup}L(\mathrm{ext})$,
where 
$$
L(\mathrm{int})=\{\alpha\in L : \alpha\subset \Lambda_{\mathrm{int}} \}, \quad\text{and}\quad L(\mathrm{ext})=L\setminus L(\mathrm{int}).
$$
The edge set $E^{\full}$ decomposes as
$$
E(\text{int})=\{(\beta,\alpha\vee\beta)\in E^{\full} :  \alpha\subset \Lambda_{\mathrm{int}}, \; \beta\in L \},\quad\text{and}\quad E(\text{ext})=E^{\full}\setminus E(\text{int}).
$$
In other words, $E(\text{int})$ contains excitations that replace some orbitals in $\Lambda_R$ with ones in $\Lambda_S$.
Then $G(\mathrm{int})=(L,E(\mathrm{int}))$ and $G(\mathrm{ext})=(L,E(\mathrm{ext}))$ are transitive and consistent subgraphs. 
Clearly, \cref{tccgraph} can be recovered with the choice $\Lambda_R=\{1,\ldots,N\}$, $\Lambda_S=\{N+1,\ldots,k\}$.
\end{example}

Finally, we define excitation graph in the multireference case, which is a natural extension of the above concepts.

\begin{remark}\label{differentvac}
An important warning is in order. In general, $\alpha\vee_m\beta$ may or may not be equal to $\alpha\vee_\ell\beta$ for $m\neq\ell$. 
In fact, take $\Lambda=\{1,2,\ldots,7\}$ and $0_1=\{1,2,3\}$, $0_2=\{1,2,4\}$. Then, with $\alpha=\{1,3,5\}$ and $\beta=\{2,6,7\}$,
we have $\alpha\vee_1\beta=\alpha\vee_2\beta=\{5,6,7\}$. On the other hand, with $\alpha=\{2,3,4\}$ and $\beta=\{1,2,5\}$, we have $\alpha\vee_1\beta=\{2,4,5\}$,
but $\alpha\vee_2\beta=\{2,3,5\}$. Note that in the first case, we actually have $(\alpha,\alpha\vee_1\beta)\in E_1^{\full}$ and $(\alpha,\alpha\vee_2\beta)\in E_2^{\full}$,
i.e. a double edge.
\end{remark}

\begin{definition}\label{exmultidef}
The \emph{full MR excitation multigraph w.r.t. $\Omega$}, $G^{\full}=(L,E^{\full})$ is defined as the union of the individual full SR excitation graphs 
$G_m^{\full}=(L_m,E_m^{\full})$ for all $m=1,\ldots,M$, i.e.
$$
L=\bigcup_{m=1}^M L_m, \quad E^{\full}=\biguplus_{m=1}^M E_m^{\full},
$$
where $\uplus$ denotes multiset union.
\end{definition}

Note that as opposed to the SR graph $G_m^{\full}$, the MR graph $G^{\full}$ might have parallel edges (called 
``redundant'' excitations), this justifies that $G^{\full}$ was introduced as a multigraph. 
Notice that other references cannot be ``reached'' from a given one (see \cref{mrgraph}).
An algorithm for choosing the set of reference states $\Omega=\{0_m\}_{m=1}^M$ in an optimal way, adhering to some given criteria is described in \cref{optimalmulti}.

\subsection{Excitation operators}\label{exop}

Recall that $\Omega = \{0_m\}_{m=1}^M$ denotes the set of references, and that $L_m$ does \emph{not} contain the other reference states $\Omega\setminus\{0_m\}$.
The construction described below is to be repeated for every $m=1,\ldots,M$ separately.
%Recall that $L_m$ does \emph{not} contain the other reference states $\Omega\setminus\{0_m\}$.

First, we fix an ordering of the indices in $\alpha\in S$.
Then, for every element $\alpha=\{\alpha_1,\ldots,\alpha_N\}\in S$ we assign the lexicographically ordered $N$-tuple 
$$
\alpha^<=(\alpha_1^<, \ldots,\alpha_N^<)\in \Lambda^N, \quad \alpha_1^<<\ldots<\alpha_N^<, \quad\text{where}\quad \alpha_j^<\in\alpha.
$$
Without loss of generality, we can assume that the orbital indices contained in $0_m$ are strictly less than 
the virtual indicies $\Lambda\setminus 0_m$.

As in \cref{secfnsp}, fix an orthonormal set $\mathcal{B}_K=\{\phi_p\}_{p\in\Lambda}\subset H^1(\RR^3)$ and the corresponding Slater determinants 
\begin{equation}\label{detbas}
\mathfrak{B}_K=\{ \Phi_{\alpha} \in \HC^1 : \alpha\in S, \; \Phi_{\alpha}(\vec{X})=N!^{-1/2}\det(\phi_{\alpha_i^<}(\vec{x}_j))_{1\le i,j\le N} \}.
\end{equation}
Recall the notation $\HC^1_K\subset\HC^1$ for the subspace spanned by $\mathfrak{B}_K$; which is allowed to be finite-, or infinite-dimensional
depending on $K=|\Lambda|$.

\begin{definition}\label{exopmain}
Let $G_m=(L_m,E_m)$ be a subgraph of $G^\full_m$.
The family of linear operators $X_{\alpha}^{(m)}:=X_{\alpha}(G_m):\HC^1_K\to \HC^1_K$ given by 
$$
X_{\alpha}(G_m) \Phi_\beta=\begin{cases}
\sigma(\alpha,\beta)\Phi_{\alpha\vee_m\beta} & (\beta,\alpha\vee_m\beta)\in E_m \\
0 & (\beta,\alpha\vee_m\beta)\not\in E_m
\end{cases}
$$
for each $\alpha\in\Xi(G_m)$ and $\beta\in S$, and extended boundedly and linearly to the whole space $\HC^1_K$ (see \cite{rohwedder2013continuous}) is called the family of \emph{excitation operators on $G_m$}. 
Here, $\sigma(\alpha,\beta)$ is the sign of the permutation $\pi(\alpha,\beta)$ that puts the $N$-tuple
$((\virt{m}{\beta})^<,(\virt{m}{\alpha})^<)$
in lexicographical order.
\end{definition}

Assuming $\Xi(G_m)\neq\emptyset$, by the definition of $\Xi(G_m)$ (see \cref{xig}) for every $\alpha\in\Xi(G_m)$ 
there is some $\beta\in L_m$ such that $(\beta,\alpha\vee_m\beta)\in E_m$ and therefore $X_{\alpha}(G_m)\not\equiv 0$. 
Recalling $\rk_m(\alpha\vee_m\beta)=\rk_m(\alpha) + \rk_m(\beta)$ (see \cref{Lmreform}), we can roughly say that an excitation operator $X_\alpha(G_m)$ increases the rank by $\rk_m(\alpha)$.

Since $G_m=(L_m,E_m)$ is a subgraph of $G_m^\full=(L_m,E_m^\full)$, some excitations 
might be missing, i.e. $\Xi(G_m)\subset \Xi(G_m^\full)$. The next result shows that the excitation operators constructed for a \emph{consistent subgraph} $G_m$
(see \cref{exsub})
are precisely the same as the ones constructed for $G_m^\full$, with some of the excitation operators possibly missing. This explains the use of the word ``consistent''.

\begin{theorem}\label{consex}
Let $G_m$ be a \emph{consistent} subgraph of $G_m^\full$. Then, 
$$
X_\alpha(G_m)\equiv X_\alpha(G_m^\full)\quad\text{ for all $\alpha\in\Xi(G_m)$.}
$$
\end{theorem}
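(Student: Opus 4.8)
The plan is to prove the identity by testing both operators against the spanning set of Slater determinants. Recall from \cref{exopmain} that $X_\alpha(G_m)$ and $X_\alpha(G_m^\full)$ are each obtained by specifying the action on the basis $\{\Phi_\beta : \beta\in S\}$ of $\HC^1_K$ and then extending boundedly and linearly. Since two bounded linear operators that agree on a spanning set agree everywhere, it suffices to show $X_\alpha(G_m)\Phi_\beta = X_\alpha(G_m^\full)\Phi_\beta$ for every $\beta\in S$. A key preliminary observation is that the sign factor $\sigma(\alpha,\beta)$ and the target state $\alpha\vee_m\beta$ depend only on $\alpha$ and $\beta$ and \emph{not} on the underlying graph; hence the two definitions can only differ through which pairs $(\beta,\alpha\vee_m\beta)$ are counted as edges. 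The whole problem therefore reduces to comparing the edge sets $E_m$ and $E_m^\full$ along the ``orbit'' of the fixed $\alpha$.

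The core step is to show that, for the fixed $\alpha\in\Xi(G_m)$, one has the equivalence
$$
(\beta,\alpha\vee_m\beta)\in E_m \quad\Longleftrightarrow\quad (\beta,\alpha\vee_m\beta)\in E_m^\full \qquad\text{for all }\beta\in L_m.
$$
The forward implication is immediate from $E_m\subset E_m^\full$. For the reverse implication I would invoke consistency (\cref{exsub}) in its orbit form: since $\alpha\in\Xi(G_m)$, by the definition \cref{xig} there exists at least one $\beta_0\in L_m$ with $(\beta_0,\alpha\vee_m\beta_0)\in E_m$, so the orbit of $\alpha$ is nonempty in $E_m$; consistency then forces $E_m$ to contain the \emph{entire} orbit $\{(\beta,\alpha\vee_m\beta)\in E_m^\full : \beta\in L_m\}$, which is exactly the reverse implication. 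Here it is worth noting that the label $\alpha$ attached to an edge $(\beta,\gamma)$ is unambiguous by \cref{abinvgraph} ($\alpha=\beta^\perp\wedge_m\gamma$), so the notion of ``orbit of $\alpha$'' is well defined.

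With this equivalence in hand, I finish by a short case analysis on $\beta\in S$. If $\beta\notin L_m$, then $(\beta,\alpha\vee_m\beta)$ lies in neither $E_m$ nor $E_m^\full$ (edges live in $L_m\times L_m$), and both operators return $0$. If $\beta\in L_m$ and $(\beta,\alpha\vee_m\beta)\in E_m^\full$, the equivalence above places the edge in $E_m$ as well, so both operators return $\sigma(\alpha,\beta)\Phi_{\alpha\vee_m\beta}$. If $\beta\in L_m$ but $(\beta,\alpha\vee_m\beta)\notin E_m^\full$, then by $E_m\subset E_m^\full$ the edge is also absent from $E_m$, and both operators return $0$. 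In every case the two operators agree on $\Phi_\beta$, completing the proof. The argument is essentially bookkeeping, and the only genuine subtlety—the ``main obstacle'' to get right—is the asymmetry in how the two inclusions are handled: the trivial inclusion $E_m\subset E_m^\full$ disposes of the vanishing (non-edge) cases, while consistency is precisely what is needed, and is needed \emph{only}, to upgrade an $E_m^\full$-edge to an $E_m$-edge within the orbit of the single fixed $\alpha$.
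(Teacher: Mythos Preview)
Your proof is correct and follows the same approach as the paper's own proof: fix $\alpha\in\Xi(G_m)$, use consistency to conclude that the entire $\alpha$-orbit lies in $E_m$, and then compare the two operators on basis vectors $\Phi_\beta$. The paper compresses all of this into two sentences, whereas you spell out the equivalence $(\beta,\alpha\vee_m\beta)\in E_m \Leftrightarrow (\beta,\alpha\vee_m\beta)\in E_m^\full$ and the case split on $\beta\in S$ explicitly; these are details the paper leaves implicit but does not handle differently.
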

\begin{proof}
Fix $\alpha\in\Xi(G_m)$, then by \cref{xig} and \cref{exsub}, $(\beta,\alpha\vee_m\beta)\in E_m$ for all $\beta\in L_m$.
Consequently, $X_{\alpha}(G_m)\Phi_\beta=X_{\alpha}(G_m^\full)\Phi_\beta$ for all $\beta\in S$.
\end{proof}

Based on this result, if $G_m$ is consistent, it is safe to drop the ``$G_m$'' from the notation $X_{\alpha}(G_m)$ and simply denote the excitation operators by $X_{\alpha}^{(m)}$,
or by $X_\alpha$ in the SR case. However, it is important to note that for a given $\alpha$, $X_{\alpha}^{(m)}\neq X_{\alpha}^{(\ell)}$ in general for differing reference states $m\neq\ell$,
see \cref{differentvac}. 

The excitation operators enjoy nice algebraic properties which we summarize in the next theorem (cf. \cite[Lemma 2.5]{rohwedder2013continuous}). 
\begin{theorem}\label{exopthm}
Let $G_m=(L_m,E_m)$ be a consistent subgraph of $G^\full_m$ and let $\{X_{\alpha}^{(m)}\}_{\alpha\in\Xi(G_m)}$ denote the set of excitation operators on $G_m$.
Then the following properties hold true.
\begin{enumerate}[label=\normalfont(\roman*)]
\item (commutativity) For all $\alpha,\beta\in \Xi(G_m)$, there holds $X_{\alpha}^{(m)} X_{\beta}^{(m)}=X_{\beta}^{(m)} X_{\alpha}^{(m)}$. In detail, for any $\gamma\in S$,
$$
X_{\alpha}^{(m)} X_{\beta}^{(m)} \Phi_\gamma=\begin{cases}
\sigma(\alpha,\beta\vee_m\gamma)\sigma(\beta,\gamma) \Phi_{\alpha\vee_m\beta\vee_m\gamma} & (\beta\vee_m\gamma,\alpha\vee_m\beta\vee_m\gamma),\\
&(\gamma,\beta\vee_m\gamma)\in E_m \\
0 & \text{otherwise}
\end{cases}
$$
\item If $G_m$ is transitive, then $\{0\}\cup\{\pm X_\alpha^{(m)}\}_{\alpha\in\Xi(G_m)}$ is multiplicatively closed.
In particular, $\{0\}\cup\{\pm X_\alpha^{(m)}\}_{\alpha\in\Xi(G_m^\full)}$ is multiplicatively closed.
\item (nilpotency) For all $\alpha\in \Xi(G_m)$, $(X_{\alpha}^{(m)})^2=0$.
\end{enumerate}
\end{theorem}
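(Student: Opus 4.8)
The plan is to apply \cref{exopmain} twice and then let the lattice identities for $\vee_m$ do the combinatorial work, keeping careful track of the signs $\sigma$. Throughout I would invoke \cref{consex}: since $G_m$ is consistent, for a fixed $\alpha\in\Xi(G_m)$ the edge $(\beta,\alpha\vee_m\beta)$ belongs to $E_m$ exactly when it belongs to $E_m^\full$, i.e.\ exactly when $(\alpha,\beta)\in\mathcal{L}_m$. By \cref{graphL} this is equivalent to $\occ{m}{\alpha}\cup\occ{m}{\beta}=0_m$ together with $\virt{m}{\alpha}\cap\virt{m}{\beta}=\emptyset$; writing $h_m(\delta):=0_m\setminus\occ{m}{\delta}$ for the \emph{hole set} of $\delta$ (so that $|h_m(\delta)|=|\virt{m}{\delta}|=\rk_m(\delta)$), the first condition reads $h_m(\alpha)\cap h_m(\beta)=\emptyset$. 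Thus $X_\alpha^{(m)}\Phi_\beta\neq 0$ precisely when the hole sets and the virtual sets of $\alpha$ and $\beta$ are simultaneously disjoint --- a description in which $\alpha$ and $\beta$ enter symmetrically, and this symmetry is what powers the whole statement.

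I would dispose of \textnormal{(iii)} first, as it isolates the key obstruction. If $(\alpha,\gamma)\notin\mathcal{L}_m$ then already $X_\alpha^{(m)}\Phi_\gamma=0$; otherwise $X_\alpha^{(m)}\Phi_\gamma=\sigma(\alpha,\gamma)\Phi_{\alpha\vee_m\gamma}$, and since $\virt{m}{\alpha\vee_m\gamma}=\virt{m}{\alpha}\cup\virt{m}{\gamma}\supseteq\virt{m}{\alpha}$ while $\alpha\in\ol{L}_m$ forces $\virt{m}{\alpha}\neq\emptyset$, the pair $(\alpha,\alpha\vee_m\gamma)$ violates disjointness of virtual parts and hence lies outside $\mathcal{L}_m$. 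Therefore the second application kills $\Phi_{\alpha\vee_m\gamma}$ and $(X_\alpha^{(m)})^2=0$. This is exactly the Pauli-exclusion obstruction flagged after \cref{graphL}.

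For \textnormal{(i)} the displayed ``in detail'' formula is just \cref{exopmain} applied twice, using associativity of $\vee_m$ to identify the target as $\Phi_{\alpha\vee_m\beta\vee_m\gamma}$. To upgrade this to commutativity I would first check that the two orders have the same support: expanding the relevant $\mathcal{L}_m$-memberships with $h_m(\beta\vee_m\gamma)=h_m(\beta)\cup h_m(\gamma)$ and $\virt{m}{\beta\vee_m\gamma}=\virt{m}{\beta}\cup\virt{m}{\gamma}$ shows that $X_\alpha^{(m)}X_\beta^{(m)}\Phi_\gamma\neq 0$ iff the three hole sets $h_m(\alpha),h_m(\beta),h_m(\gamma)$ are pairwise disjoint and the three virtual sets $\virt{m}{\alpha},\virt{m}{\beta},\virt{m}{\gamma}$ are pairwise disjoint, a condition symmetric in $\alpha,\beta$. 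As $\alpha\vee_m\beta\vee_m\gamma$ is symmetric by commutativity of $\vee_m$, it remains to prove the sign identity
\begin{equation*}
\sigma(\alpha,\beta\vee_m\gamma)\,\sigma(\beta,\gamma)=\sigma(\beta,\alpha\vee_m\gamma)\,\sigma(\alpha,\gamma).
\end{equation*}
This is the main obstacle. I would translate each $\sigma$ into the parity of the sorting permutation it names, write the product as a single parity of inversions accrued while both \emph{inserting} the created orbitals $\virt{m}{\alpha},\virt{m}{\beta}$ and \emph{deleting} the annihilated orbitals $h_m(\alpha),h_m(\beta)$ relative to $\Phi_\gamma$, and then argue that this total parity depends only on the unordered pair $\{\alpha,\beta\}$. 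The nontrivial cancellation is driven by $|h_m(\delta)|=|\virt{m}{\delta}|$: the order-dependent cross terms coming from the hole deletions and from the virtual insertions occur with equal counts and hence cancel modulo $2$. Equivalently, one may defer to the anticommutation relations of the underlying creation/annihilation operators (Part~II), for which excitation strings --- products of equally many creators and annihilators supported on the disjoint occupied and virtual ranges --- manifestly commute; a single rank-one example already shows that dropping the hole contribution would spuriously produce anticommutation, so this bookkeeping cannot be skipped.

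Finally, for \textnormal{(ii)} let $\alpha,\beta\in\Xi(G_m)$ and suppose $X_\alpha^{(m)}X_\beta^{(m)}\not\equiv 0$, so $(\alpha,\beta)\in\mathcal{L}_m$ and, by \cref{Lmreform}, $\alpha\vee_m\beta\in L_m$ with $\rk_m(\alpha\vee_m\beta)=\rk_m(\alpha)+\rk_m(\beta)\geq 2$, whence $\alpha\vee_m\beta\in\ol{L}_m$. Consistency gives the edges $(0_m,\beta),(\beta,\alpha\vee_m\beta)\in E_m$, so transitivity of $G_m$ applied to the $2$-path $0_m\to\beta\to\alpha\vee_m\beta$ yields $(0_m,\alpha\vee_m\beta)\in E_m$; by \cref{xig} this means $\alpha\vee_m\beta\in\Xi(G_m)$, so $X_{\alpha\vee_m\beta}^{(m)}$ is a genuine excitation operator on $G_m$. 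Comparing actions on each $\Phi_\gamma$ via part~\textnormal{(i)}, the targets and supports agree, and the residual task is the cocycle identity $\sigma(\alpha,\beta\vee_m\gamma)\,\sigma(\beta,\gamma)=\epsilon\,\sigma(\alpha\vee_m\beta,\gamma)$ with $\epsilon\in\{\pm1\}$ independent of $\gamma$; the $\gamma$-dependent inversions split additively over $\virt{m}{\gamma}$ and $h_m(\gamma)$ and therefore factor out, leaving $\epsilon$ a function of $\alpha,\beta$ alone. Granting this, $X_\alpha^{(m)}X_\beta^{(m)}=\epsilon\,X_{\alpha\vee_m\beta}^{(m)}\in\{0\}\cup\{\pm X_\delta^{(m)}\}_{\delta\in\Xi(G_m)}$, and specializing to $G_m=G_m^\full$ (transitive by \cref{gfulltrans}) gives the final assertion. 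As in \textnormal{(i)}, the sole genuine difficulty is the sign bookkeeping; everything else is forced by the lattice operations together with \cref{Lmreform,consex,xig}.
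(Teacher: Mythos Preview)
Your proposal tracks the paper's proof closely. Part~(iii) and the support-matching half of part~(i) are essentially identical to the paper's argument --- you merely unpack the phrase ``due to the consistent subgraph property'' into the explicit pairwise disjointness of hole- and virtual-sets. The substantive divergence is the sign identity in~(i). The paper's argument does not invoke hole deletions or the balance $|h_m(\delta)|=|\virt{m}{\delta}|$: since by \cref{exopmain} the sign $\sigma(\alpha,\beta)$ is defined as the parity of sorting only the \emph{virtual} tuple $((\virt{m}{\beta})^<,(\virt{m}{\alpha})^<)$, the paper argues directly that $\sigma(\alpha,\beta\vee_m\gamma)\,\sigma(\beta,\gamma)$ and $\sigma(\beta,\alpha\vee_m\gamma)\,\sigma(\alpha,\gamma)$ are each the sign of a composite sorting permutation applied to the concatenation of $\ol\alpha,\ol\beta,\ol\gamma$, and concludes by multiplicativity of $\sgn$ under composition. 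Your inversion-counting with simultaneous creation and annihilation is the second-quantized picture (cf.\ \cref{2qex}), but it imports structure not present in the paper's purely virtual-side definition of $\sigma$; the paper's composition-of-permutations route is shorter and tied directly to \cref{exopmain}. For~(ii), you apply transitivity to the $2$-path $0_m\to\beta\to\alpha\vee_m\beta$, whereas the paper applies it to $\gamma\to\beta\vee_m\gamma\to\alpha\vee_m\beta\vee_m\gamma$; both are valid, and your explicit treatment of the $\gamma$-independent sign $\epsilon$ spells out what the paper compresses into the clause ``either $X_\alpha^{(m)}X_\beta^{(m)}=\pm X_{\alpha\vee_m\beta}^{(m)}$ or $0$.''
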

\begin{proof}
To see (i), first observe that if $(\beta\vee_m\gamma,\alpha\vee_m(\beta\vee_m\gamma))$, $(\gamma,\beta\vee_m\gamma)\in E_m$, then
$(\alpha\vee_m\gamma,\beta\vee_m(\alpha\vee_m\gamma))$, $(\gamma,\alpha\vee_m\gamma)\in E_m$ due to the consistent subgraph property of $G_m$.
It is obvious that $\Phi_{\alpha\vee_m\beta\vee_m\gamma}=\Phi_{\beta\vee_m\alpha\vee_m\gamma}$ from the commutativity of $\vee_m$.
It remains to prove $\sigma(\alpha,\beta\vee_m\gamma)\sigma(\beta,\gamma)=\sigma(\beta,\alpha\vee_m\gamma)\sigma(\alpha,\gamma)$.
Let $\pi_1$, $\pi_2$ and $\tau_1$, $\tau_2$ be the permutations that put $((\ol{\beta}\cup \ol{\gamma})^<,\ol{\alpha}^<)$, $(\ol{\gamma}^<,\ol{\beta}^<)$
and $((\ol{\alpha}\cup \ol{\gamma})^<,\ol{\beta}^<)$, $(\ol{\gamma}^<,\ol{\alpha}^<)$, respectively, in lexicographic order.
Then $\pi_1\circ\pi_2=\tau_1\circ\tau_2=\sigma$, where $\sigma$ is the permutation that puts $(\ol{\alpha},\ol{\beta},\ol{\gamma})$ in lexicographic order.
The claim follows from the multiplicativity of the $\sgn$ function on permutations.

For (ii), suppose that $G_m$ is transitive and that $\alpha,\beta\in\Xi(G_m)$. Using (i), either $X_\alpha^{(m)}X_\beta^{(m)}=\pm X_{\alpha\vee_m\beta}^{(m)}$
or $X_\alpha^{(m)}X_\beta^{(m)}=0$. In the former case, $(\beta\vee_m\gamma,\alpha\vee_m\beta\vee_m\gamma)$, $(\gamma,\beta\vee_m\gamma)\in E_m$
implies that $(\gamma,\alpha\vee_m\beta\vee_m\gamma)\in E_m$ by the transitivity of $G_m$, so $\alpha\vee_m\beta\in \Xi(G_m)$.

For (iii), it is enough to notice that $(\alpha\vee_m\alpha\vee_m\gamma,\alpha\vee_m\gamma)=(\alpha\vee_m\gamma,\alpha\vee_m\gamma)\not\in E_m^\full$, because
$G_m^\full$ does not contain loop edges by definition.
\end{proof}

It is important to note that in general \emph{excitation operators corresponding to different reference states do not commute}: 
$X_{\alpha}^{(m)} X_{\beta}^{(\ell)}\neq X_{\beta}^{(m)} X_{\alpha}^{(\ell)}$ for $m\neq \ell$, again, because of \cref{differentvac}.

\begin{remark}\label{2qex}
The excitation operators are traditionally expressed using the language of second quantization.
Let $a_p^\dag$ and $a_p$ denote the fermionic creation and annihilation operators. Then $\Phi_\beta=a^\dag_{\beta_1}\cdots a^\dag_{\beta_N}\ket{\mathrm{vac}}$, where $\beta=\{\beta_1<\ldots<\beta_N\}$, and
$$
X_{\alpha}=a^\dag_{p_1} a_{q_1}\cdots a^\dag_{p_n} a_{q_n}.
$$
Here, $\ket{\mathrm{vac}}$ is the Fock vacuum state, $\{q_1,\ldots,q_n\}=0\setminus\occ{}{\alpha}$ and $\{p_1,\ldots,p_n\}=\virt{}{\alpha}$ with $q_1<\ldots<q_n$ and $p_1<\ldots<q_n$.
In other words, $X_{\alpha}$ changes the orbitals $0\setminus\occ{}{\alpha}$ to $\virt{}{\alpha}$, as expected.
Although the excitation operators commute with each other, they \emph{do not} commute in general with the Hamiltonian.
\end{remark}

We now define a family of operators which ``reverse'' the action of $X_{\alpha}^{(m)}$.
\begin{definition}
	Let $G_m=(L_m,E_m)$ be a subgraph of $G^\full_m$.
For all $\alpha\in\Xi(G_m)$, the linear operators $(X_{\alpha}^{(m)})^\dag:\HC^1_K\to \HC^1_K$ defined via 
$$
(X_{\alpha}^{(m)})^\dag \Phi_\beta=\begin{cases}
\sigma(\alpha,\alpha^\perp\wedge_m\beta)\Phi_{\alpha^\perp\wedge_m\beta} & (\alpha^\perp\wedge_m\beta,\beta)\in E_m \\
0 & (\alpha^\perp\wedge_m\beta,\beta)\not\in E_m
\end{cases}
$$
for any $\beta\in S$,
and extended boundedly and linearly to the whole space $\HC^1_K$, are called \emph{de-excitation operators on $G_m$}. 
\end{definition}

It is easy to see using \cref{abinvgraph} and \cref{Lmreform} that 
\begin{equation}\label{rkdeex}
\rk_m(\alpha^\perp\wedge_m\beta)=\rk_m(\beta)-\rk_m(\alpha),
\end{equation}
whenever $(\alpha^\perp\wedge_m\beta,\beta)\in E_m$.
Therefore, we may roughly say that the de-excitation operator $(X_{\alpha}^{(m)})^\dag$ decreases the rank by $\rk_m(\alpha)$.
Of course, the notation $\dag$ is not coincidental, and $(X_{\alpha}^{(m)})^\dag$ is in fact the $\LC^2$-adjoint of $X_{\alpha}^{(m)}$.

\begin{theorem}\label{deexadj}
Suppose that $\{X_\alpha^{(m)}\}$ and $\{(X_\alpha^{(m)})^\dag\}$ are the set of excitation and de-excitation
operators corresponding to the excitation graph $G_m$. Then 
$$
\dua{(X_\alpha^{(m)})^\dagger \Phi}{\Psi}=\dua{\Phi}{X_\alpha^{(m)}\Psi}\quad\text{for all}\quad \Phi,\Psi\in\HC^1_K\;\text{and}\;\alpha\in\Xi(G_m).
$$
\end{theorem}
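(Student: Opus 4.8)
The plan is to reduce the operator identity to a statement about matrix entries in the $\LC^2$-orthonormal basis $\mathfrak{B}_K=\{\Phi_\beta\}_{\beta\in S}$ of $\HC^1_K$. Since $X_\alpha^{(m)}$ and $(X_\alpha^{(m)})^\dagger$ are bounded on $\HC^1_K$ and the pairing $\dua{\cdot}{\cdot}$ is continuous and bilinear, it suffices to verify $\dua{(X_\alpha^{(m)})^\dagger\Phi_\mu}{\Phi_\nu}=\dua{\Phi_\mu}{X_\alpha^{(m)}\Phi_\nu}$ for all $\mu,\nu\in S$, the general identity following by passing to (closed) linear combinations. Both sides are then read off directly from the defining case distinctions together with the orthonormality $\dua{\Phi_\mu}{\Phi_{\mu'}}=\delta_{\mu\mu'}$.

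Carrying out these evaluations, the right-hand side equals $\sigma(\alpha,\nu)$ when $\mu=\alpha\vee_m\nu$ and $(\nu,\mu)\in E_m$, and vanishes otherwise; the left-hand side equals $\sigma(\alpha,\alpha^\perp\wedge_m\mu)$ when $\nu=\alpha^\perp\wedge_m\mu$ and $(\alpha^\perp\wedge_m\mu,\mu)\in E_m$, and vanishes otherwise. The first thing I would observe is that the \emph{signs already agree for free}: on the overlap where $\nu=\alpha^\perp\wedge_m\mu$, the left-hand sign $\sigma(\alpha,\alpha^\perp\wedge_m\mu)$ is literally $\sigma(\alpha,\nu)$, which is exactly the sign appearing on the right (this is precisely why the sign in the de-excitation definition was chosen as it was). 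Hence no separate permutation bookkeeping is needed, and the entire proof collapses to showing that the two \emph{indicator conditions} cut out the same subset of $S\times S$, i.e.
\[
\big(\mu=\alpha\vee_m\nu \text{ and } (\nu,\mu)\in E_m\big)\iff \big(\nu=\alpha^\perp\wedge_m\mu \text{ and } (\nu,\mu)\in E_m\big).
\]

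For the forward implication I would use \cref{abinv}. If $\mu=\alpha\vee_m\nu$ then $\alpha\preceq_m\mu$, so applying \cref{abinv} with the roles $(\beta,\gamma)\mapsto(\alpha,\mu)$ and solving for the remaining join factor gives $\nu=\alpha^\perp\wedge_m\mu$; the edge $(\nu,\mu)$ is unchanged, so the right condition passes to the left. For the converse I would begin from $\nu=\alpha^\perp\wedge_m\mu$ and use the Boolean laws of \cref{boolean} ($\alpha\vee_m\alpha^\perp=1$ together with distributivity) to compute $\alpha\vee_m\nu=\alpha\vee_m(\alpha^\perp\wedge_m\mu)=\alpha\vee_m\mu$, so that $\mu=\alpha\vee_m\nu$ is equivalent to $\alpha\preceq_m\mu$. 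To pin this down I would invoke \cref{abinvgraph}: the edge $(\nu,\mu)\in E_m$ carries a unique label $\nu^\perp\wedge_m\mu$, and a de Morgan computation gives $\nu^\perp\wedge_m\mu=(\alpha\vee_m\mu^\perp)\wedge_m\mu=\alpha\wedge_m\mu$, whence requiring this label to be $\alpha$ is exactly $\alpha\preceq_m\mu$.

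The step I expect to be the main obstacle is precisely this converse, namely verifying that the edge condition $(\alpha^\perp\wedge_m\mu,\mu)\in E_m$ entering the de-excitation definition genuinely forces $\alpha\vee_m(\alpha^\perp\wedge_m\mu)=\mu$ (equivalently, that the edge in question is the $\alpha$-labelled one). The forward direction and the sign matching are essentially automatic, but in the converse one must ensure that the preimage $\alpha^\perp\wedge_m\mu$ does not merely produce \emph{some} edge into $\mu$ but the one labelled by $\alpha$; this is where I would lean most heavily on the uniqueness of the preimage in \cref{abinvgraph} and on the graded structure of $(L_m,\preceq_m)$, controlling $|\alpha^\perp\wedge_m\mu|$ and exploiting rank-additivity along the edge to force $\alpha\preceq_m\mu$.
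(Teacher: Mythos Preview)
Your overall plan---reduce to Slater basis elements and match the two indicator conditions---is exactly the paper's approach, and your observation that the signs agree automatically is correct. You have also correctly isolated the delicate point: the converse implication, i.e.\ that $(\alpha^\perp\wedge_m\mu,\mu)\in E_m$ forces the edge to be the $\alpha$-labelled one (equivalently $\alpha\preceq_m\mu$).

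However, your proposed resolution via rank-additivity and \cref{abinvgraph} cannot close this gap, because the implication is in fact \emph{false} with the de-excitation operator as literally defined. Take $N=3$, $\Lambda=\{1,\ldots,7\}$, $0=\{1,2,3\}$, $\alpha=\{1,4,5\}$ and $\mu=\{2,5,6\}$. Then $\nu:=\alpha^\perp\wedge\mu=\{2,3,6\}\in L$ and one checks that $(\nu,\mu)\in E^\full$, but its label is $\nu^\perp\wedge\mu=\{1,2,5\}=\alpha\wedge\mu\neq\alpha$; indeed $\alpha\not\preceq\mu$ since $\ol{\alpha}=\{4,5\}\not\subset\ol{\mu}=\{5,6\}$. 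Thus the stated definition gives $(X_\alpha)^\dag\Phi_\mu=\pm\Phi_\nu\neq 0$, whereas $X_\alpha\Phi_\nu=\pm\Phi_{\{4,5,6\}}$, so $\dua{(X_\alpha)^\dag\Phi_\mu}{\Phi_\nu}\neq 0=\dua{\Phi_\mu}{X_\alpha\Phi_\nu}$. What is really intended (and what the second-quantized picture in \cref{2qex} delivers) is that $(X_\alpha)^\dag\Phi_\mu$ be nonzero only when the edge $(\alpha^\perp\wedge_m\mu,\mu)$ carries the label $\alpha$, i.e.\ when $\alpha\preceq_m\mu$; under that reading your converse becomes immediate. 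The paper's own proof glosses over exactly this point---it appeals to \cref{abinvgraph} with the roles of $\alpha$ and $\beta$ swapped, which presupposes $\alpha\preceq_m\gamma$ rather than deriving it. So your instinct was right: the obstacle is real, and it is resolved by sharpening the definition, not by a cleverer lattice argument.
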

\begin{proof}
It is enough to prove the relation for $\Phi=\Phi_\gamma$ and $\Psi=\Phi_\beta$, as the general statement follows by linearity.
Suppose that $(\alpha^\perp\wedge_m\gamma,\gamma)\in E_m$, then 
\begin{align*}
\dua{(X_\alpha^{(m)})^\dagger \Phi_\gamma}{\Phi_\beta}&=\sigma(\alpha,\alpha^\perp\wedge_m\gamma)\dua{\Phi_{\alpha^\perp\wedge_m \gamma}}{\Phi_\beta}=\sigma(\alpha,\beta)\dua{\Phi_\gamma}{\Phi_{\alpha\vee_m\beta}}=
\dua{\Phi_\gamma}{X_\alpha^{(m)}\Phi_\beta},
\end{align*}
where we used that $\alpha^\perp\wedge_m \gamma=\beta\in L_m$ if and only if $\alpha\vee_m\beta=\gamma\in L_m$ (\cref{abinvgraph}).
\end{proof}

\begin{theorem}\label{deexthm}
Let $G_m=(L_m,E_m)$ be a consistent subgraph of $G^\full_m$ and let $\{X_{\alpha}^{(m)}\}_{\alpha\in\Xi(G_m)}$ and $\{(X_{\alpha}^{(m)})^\dag\}_{\alpha\in\Xi(G_m)}$
 denote the set of excitation-, and de-excitation operators on $G_m$.
Then the following properties hold true.
\begin{enumerate}[label=\normalfont(\roman*)]
\item (commutativity) For all $\alpha,\beta\in \Xi(G_m)$, there holds 
$$
(X_{\alpha}^{(m)})^\dag (X_{\beta}^{(m)})^\dag=(X_{\beta}^{(m)})^\dag (X_{\alpha}^{(m)})^\dag.
$$
\item For any $\alpha,\beta\in \Xi(G_m)$ and $\gamma\in S$, the following formula holds true:
$$
(X_\alpha^{(m)})^\dagger X_\beta^{(m)} \Phi_\gamma=
\sigma(\alpha,\alpha^\perp\wedge_m(\beta\vee_m\gamma))\sigma(\beta,\gamma)\Phi_{\alpha^\perp\wedge(\beta\vee\gamma)}
$$
if $(\gamma, \beta\vee_m\gamma)\in E_m$ and $(\alpha^\perp\wedge_m(\beta\vee_m\gamma),\beta\vee_m\gamma)\in E_m$ both hold true.  Otherwise,
$(X_\alpha^{(m)})^\dagger X_\beta^{(m)} \Phi_\gamma=0$.
In particular, $(X_{\alpha}^{(m)})^\dag \Phi_\alpha=\Phi_{0_m}$.
\item $(X^{(m)}_\alpha)^\dag \Phi_{0_\ell}=0$ for any $m,\ell=1,\ldots,M$ and $\alpha\in\Xi(G_m)$.
\item (nilpotency) $((X^{(m)}_\alpha)^\dag)^2=0$ for any $\alpha\in\Xi(G_m)$.
\end{enumerate}
\end{theorem}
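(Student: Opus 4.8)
The plan is to obtain the two algebraic properties (i) and (iv) from the corresponding statements for the excitation operators (\cref{exopthm}) by passing through the adjoint pairing (\cref{deexadj}), and to obtain (ii) and (iii) by direct computation from the definitions. For (i), I would test $(X_\alpha^{(m)})^\dag(X_\beta^{(m)})^\dag$ against an arbitrary basis determinant and apply \cref{deexadj} twice to move both de-excitations onto the other slot, producing $X_\beta^{(m)}X_\alpha^{(m)}$; commutativity of the excitation operators (\cref{exopthm}(i)) then swaps $\alpha$ and $\beta$, and reading the chain backwards yields $(X_\beta^{(m)})^\dag(X_\alpha^{(m)})^\dag$. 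Since $\{\Phi_\gamma\}_{\gamma\in S}$ is $\LC^2$-orthonormal, equality of the operators follows. For (iv) the same bilinear-form manipulation reduces $((X_\alpha^{(m)})^\dag)^2$ to the adjoint of $(X_\alpha^{(m)})^2=0$ (\cref{exopthm}(iii)), giving nilpotency immediately. This route sidesteps any worry about forming the adjoint of a product of these $\HC^1_K\to\HC^1_K$ maps, since \cref{deexadj} supplies the needed pairing outright.

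For (ii), I would simply compose the two definitions: $X_\beta^{(m)}\Phi_\gamma=\sigma(\beta,\gamma)\Phi_{\beta\vee_m\gamma}$ when $(\gamma,\beta\vee_m\gamma)\in E_m$, and then $(X_\alpha^{(m)})^\dag\Phi_{\beta\vee_m\gamma}=\sigma(\alpha,\alpha^\perp\wedge_m(\beta\vee_m\gamma))\Phi_{\alpha^\perp\wedge_m(\beta\vee_m\gamma)}$ when $(\alpha^\perp\wedge_m(\beta\vee_m\gamma),\beta\vee_m\gamma)\in E_m$; multiplying the signs yields the stated formula, with a zero in either factor forcing the whole expression to vanish. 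For the ``in particular'' claim I would specialize the de-excitation definition to $\beta=\alpha$. The crucial lattice identity is $\alpha^\perp\wedge_m\alpha=0_m$, which I would verify by splitting into occupied and virtual parts, using $\occ{m}{\alpha^\perp}=0_m\setminus\occ{m}{\alpha}$ together with $\virt{m}{\alpha^\perp}\cap\virt{m}{\alpha}=\emptyset$. To conclude $(X_\alpha^{(m)})^\dag\Phi_\alpha=\Phi_{0_m}$ I then need $(0_m,\alpha)\in E_m$: since $\alpha\vee_m 0_m=\alpha$ and $\alpha\in\Xi(G_m)$, consistency of $G_m$ (\cref{exsub}) guarantees that the entire orbit---in particular the edge issuing from $0_m$---lies in $E_m$, while $\sigma(\alpha,0_m)=1$ because $\virt{m}{0_m}=\emptyset$ forces the identity permutation.

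For (iii), I would argue directly from the definition of $(X_\alpha^{(m)})^\dag\Phi_{0_\ell}$, splitting into two cases. If $\ell\neq m$, then $0_\ell\in\Omega\setminus\{0_m\}$ lies outside $L_m$, and since every edge of $E_m\subset E_m^\full$ has both endpoints in $L_m$ there is no edge of the form $(\cdot,0_\ell)$, so the operator annihilates $\Phi_{0_\ell}$. If $\ell=m$, the same kind of computation as above gives $\alpha^\perp\wedge_m 0_m=0_m$, so the only candidate edge is the loop $(0_m,0_m)$, which is excluded from $E_m^\full$ by construction; hence the result is zero again. None of these computations is deep; the only steps requiring real care are the lattice identities $\alpha^\perp\wedge_m\alpha=0_m$ and $\alpha^\perp\wedge_m 0_m=0_m$, and the appeal to consistency securing $(0_m,\alpha)\in E_m$. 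I expect recognizing that the consistency hypothesis is precisely what makes the ``in particular'' identity go through---rather than any analytic subtlety---to be the one genuinely load-bearing point of the argument.
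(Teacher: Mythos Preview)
Your proposal is correct and follows essentially the same approach as the paper's own proof: (i) and (iv) are deduced from \cref{exopthm} via the adjoint relation \cref{deexadj}, (ii) is obtained by directly composing the definitions, and (iii) is argued from the absence of edges incident to reference vertices. Your write-up is in fact more thorough than the paper's very terse proof, particularly in making explicit the lattice identities $\alpha^\perp\wedge_m\alpha=0_m$ and $\alpha^\perp\wedge_m 0_m=0_m$, and in pinpointing that consistency is what ensures $(0_m,\alpha)\in E_m$ for the ``in particular'' claim.
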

\begin{proof}
Part (i) follows from \cref{deexadj} combined with \cref{exopthm} (i). Part (ii) follows directly from the definitions.
Part (iii) comes from the fact that there are no edges between different $0_m$'s.
Part (iv) follows from  \cref{exopthm} (iii).
\end{proof}

It is highly important to stress that in general excitation-, and de-excitation operators do not commute with each other:
$$
X_{\alpha}^{(m)}(X_{\alpha}^{(m)})^\dag\neq (X_{\alpha}^{(m)})^\dag X_{\alpha}^{(m)},
$$
in other words, the $X_{\alpha}^{(m)}$'s are \emph{nonnormal} operators. Also, $[(X_{\alpha}^{(m)})^\dag, X_{\beta}^{(m)}]\neq 0$ in general.
 This fact is the source of many technical obstacles in the analysis
of the CC method, primarily because it implies that the similarity-transformed Hamilton operator \cref{bchham} is nonnormal.

\subsection{Cluster operators}\label{clustop}

From now on, we omit the reference index $m$ from the notations, with the understanding that the considerations hold true for every reference independently.
Suppose that we constructed the set of excitation operators $\{X_{\alpha}\}_{\alpha\in\Xi(G)}$ for a given consistent subgraph $G=(L,E)$.
The completion of their linear hull
$$
\VF(G)=\ol{\Span \{X_{\alpha}\}_{\alpha\in\Xi(G)}}^{\|\cdot\|_{\mathcal{L}(\HC^1,\HC^1)}}
$$
is called the \emph{space of cluster operators on $G$} endowed with operator norm $\|\cdot\|_{\mathcal{L}(\HC^1,\HC^1)}$. As mentioned earlier, if $G$ is not the full excitation graph $G^\full$, then certain excitation operators
will be absent and therefore, they will be missing from $\VF(G)$ as well. 

\begin{proposition}\label{clusnil}
For any $T\in\VF(G)$, we have $T^{N+1}=0$.
\end{proposition}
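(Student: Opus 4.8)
The plan is to equip $\HC^1_K$ with a \emph{rank grading}, observe that every excitation operator strictly raises this grading, and thereby reduce the statement to the elementary fact that the rank of an $N$-particle state never exceeds $N$; the passage from finite linear combinations to the operator-norm closure $\VF(G)$ is then handled by a soft continuity argument.

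First I would set up the grading. For $\beta\in S$ put $\rk(\beta)=|\beta\setminus 0|\in\{0,1,\ldots,N\}$ (the natural extension to $S$ of the rank function from \cref{secexord}), and let $D=\Span\mathfrak{B}_K$, which is dense in $\HC^1_K$ and splits as $D=\bigoplus_{r=0}^{N}W_r$ with $W_r=\Span\{\Phi_\beta:\beta\in S,\ \rk(\beta)=r\}$. Next I would establish the key raising property. Fix $\alpha\in\Xi(G)$ and $\beta\in S$: if $(\beta,\alpha\vee\beta)\notin E$ then $X_\alpha\Phi_\beta=0$ by \cref{exopmain}; otherwise $X_\alpha\Phi_\beta=\pm\Phi_{\alpha\vee\beta}$, and since the edge lies in $E^\full$ we have $(\alpha,\beta)\in\mathcal{L}$, so \cref{Lmreform} gives $\rk(\alpha\vee\beta)=\rk(\alpha)+\rk(\beta)$. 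Because $\alpha\in\Xi(G)\subset\ol{L}$ forces $\alpha\neq 0$, i.e.\ $\rk(\alpha)\ge 1$, in every case $X_\alpha(W_r)\subset\bigoplus_{s\ge r+1}W_s$.

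By linearity the same inclusion then holds for any finite linear combination $T_0=\sum_\alpha t_\alpha X_\alpha$, whence $T_0^{\,k}(W_r)\subset\bigoplus_{s\ge r+k}W_s$ for every $k$. Taking $k=N+1$ and using $W_s=\{0\}$ for $s>N$ yields $T_0^{\,N+1}|_D=0$; since $D$ is dense and $T_0^{\,N+1}$ is bounded, $T_0^{\,N+1}=0$. Finally I would transport this to the completion. The map $T\mapsto T^{N+1}$ is continuous on $\mathcal{L}(\HC^1,\HC^1)$ — for instance through the telescoping identity $T^{N+1}-S^{N+1}=\sum_{j=0}^{N}T^{j}(T-S)S^{N-j}$ — so the set $Z=\{T:T^{N+1}=0\}$ is closed. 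The previous step shows $\Span\{X_\alpha\}_{\alpha\in\Xi(G)}\subset Z$, hence $\VF(G)=\ol{\Span\{X_\alpha\}_{\alpha\in\Xi(G)}}\subset Z$, which is exactly the claim.

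The one point that demands care is this last step. The grading computation is wholly elementary and applies only to \emph{finite} combinations, whereas a generic $T\in\VF(G)$ is merely an operator-norm limit of such combinations and cannot be manipulated coordinatewise along the grading. The main obstacle is therefore not the nilpotency of the generators but ensuring it survives the completion, which is why I would isolate $Z$ as a closed set and invoke continuity of $T\mapsto T^{N+1}$ rather than attempt a direct grading argument for arbitrary $T$.
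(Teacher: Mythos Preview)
Your argument is correct and follows the same rank-raising idea as the paper's proof: every excitation operator increases the rank of a Slater determinant by at least one (or annihilates it), so any $(N{+}1)$-fold product vanishes. The paper leaves the passage from products of excitation operators to general $T\in\VF(G)$ entirely implicit, whereas you spell out both the linear-combination step and the operator-norm closure step via continuity of $T\mapsto T^{N+1}$; this extra care is a genuine improvement in rigor, but the underlying approach is the same.
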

\begin{proof}
It is enough to prove that an arbitrary product of $N+1$ excitation operators is zero. In fact, by definition every excitation operator either increases the rank of a Slater determinant
by at least 1 or maps it to zero.
But the rank cannot increase above $N$, so the product must be zero.
\end{proof}

It is well-known that the vector space  $\VF(G^\full)$ constructed on the full excitation graph $G^{\full}$ 
forms a \emph{commutative algebra} (see e.g. \cite[Lemma 4.2]{schneider2009analysis}) with the usual multiplication (a subalgebra of the algebra of bounded linear operators $\mathcal{L}(\HC^1_K,\HC^1_K)$).
According to \cref{clusnil}, it is also \emph{nilpotent}. More generally, we have
\begin{theorem}\label{subalgebratrans}
$\VF(G)$ is a nilpotent, commutative algebra for any transitive excitation graph $G$.
\end{theorem}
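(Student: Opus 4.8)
The plan is to establish the three required properties—algebra closure, commutativity, and nilpotency—by reducing everything to the generating excitation operators $\{X_\alpha\}_{\alpha\in\Xi(G)}$ and invoking the results already proved. The key structural fact I would lean on is \cref{exopthm}(ii): for a \emph{transitive} consistent subgraph $G$, the set $\{0\}\cup\{\pm X_\alpha\}_{\alpha\in\Xi(G)}$ is multiplicatively closed. This is precisely the ingredient that distinguishes the transitive case from the general one, since without transitivity a product $X_\alpha X_\beta$ need not again be a scalar multiple of some generator in the family.

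First I would argue that $\VF(G)$ is a subalgebra. Let $S,T\in\VF(G)$; by definition of $\VF(G)$ as the operator-norm closure of the linear span, it suffices to check that the product of two generators $X_\alpha X_\beta$ again lies in $\VF(G)$, and then extend by bilinearity and continuity of multiplication (recall all operators here are bounded on $\HC^1_K$, so composition is continuous in the operator norm). By \cref{exopthm}(ii), $X_\alpha X_\beta$ is either $0$ or $\pm X_{\alpha\vee_m\beta}$ with $\alpha\vee_m\beta\in\Xi(G)$; in either case the product is an element of the span, hence of $\VF(G)$. This shows $\VF(G)$ is closed under multiplication, so it is a subalgebra of $\mathcal{L}(\HC^1_K,\HC^1_K)$.

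Commutativity is then immediate: by \cref{exopthm}(i) the generators pairwise commute, $X_\alpha X_\beta=X_\beta X_\alpha$, and since multiplication is bilinear and jointly continuous in the operator norm, commutativity extends first to the linear span and then to its closure $\VF(G)$. Nilpotency is exactly the content of \cref{clusnil}, which already gives $T^{N+1}=0$ for every $T\in\VF(G)$ and holds for \emph{any} subgraph, not merely transitive ones; I would simply cite it.

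The only genuinely delicate point is the closure step, and it is worth spelling out rather than hand-waving. A priori the product of two elements of the closure $\VF(G)$ need not be controlled by the products of generators, so I would state explicitly that for bounded operators $(S,T)\mapsto ST$ is continuous with $\|ST\|\le\|S\|\,\|T\|$, which lets me pass from the dense subspace $\Span\{X_\alpha\}$ to its completion. One subtlety worth a remark: if $G$ is not transitive then \cref{exopthm}(ii) fails and $X_\alpha X_\beta$ may leave the span of the generators, so transitivity is used in an essential way here—this is exactly why the theorem is stated for transitive $G$ only. Everything else is a routine bilinear/continuity argument, so the main (and really the sole) obstacle is verifying that the closure does not destroy multiplicative closedness, which the boundedness of the operators resolves.
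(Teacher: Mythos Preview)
Your argument is correct and follows the same route as the paper, which dispatches the theorem in one line by citing \cref{exopthm}(ii); you have simply unpacked that citation by adding the bilinearity/continuity argument for the closure and by separately invoking \cref{exopthm}(i) and \cref{clusnil} for commutativity and nilpotency. One cosmetic point: $\VF(G)$ is defined as a closure in $\mathcal{L}(\HC^1,\HC^1)$ rather than $\mathcal{L}(\HC^1_K,\HC^1_K)$, so you should phrase the continuity step accordingly.
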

\begin{proof}
Follows from \cref{exopthm} (ii).
\end{proof}

If, however, $G$ is not transitive, then $\VF(G)$ is \emph{not} an algebra in general---for instance in $\VF(G(\mathrm{SD}))$ there are no excitation operators of rank 3 and above, but the rank of the products of excitation operators can be arbitrary ($\le N$).

\begin{example}
We observed in \cref{tccgraph} that the CAS-subgraph $G(\mathrm{CAS})$ corresponding to the TCC method is transitive and consistent, hence
$\VF(G(\mathrm{CAS}))$ forms a subalgebra of $\VF(G^\full)$ (cf. \cite{kowalski2018properties}).
Similarly, for $G(\mathrm{int})$ in \cref{kowalskigraph}, $\VF(G(\mathrm{int}))$ also forms a subalgebra.
However, in a truncated setting, where only certain low-rank edges of $E(\mathrm{CAS})$ (or $E(\mathrm{int})$) are retained, transitivity, hence the subalgebra property
is lost.
\end{example}

Let now the excitation graph $G=(L,E)$ be arbitrary. A cluster operator $C\in\VF(G)$ may be decomposed according to the excitation ranks of its constituent excitations as
\begin{equation}\label{clusterrk}
C=\sum_{r=1}^N C_r, \quad\text{where}\quad C_r=\sum_{\rk(\alpha)=r} c_\alpha X_\alpha.
\end{equation}
We say that $C$ \emph{is of rank $r$} if it contains excitation operators of rank at most $r$.
Note that the graded structure of $G$ is compatible with this decomposition in the sense that if $C$ and $D$ are of ranks $r$ and $s$, respectively, then
$CD$ is of rank at most $r+s$. 

\begin{remark}
In the SR case, the cluster operators can be used to express any wavefunction in $\HC^1_K$ if the full excitation graph $G^\full$ is used for their construction.
In fact, in this case, $X_\alpha\Phi_0=\Phi_\alpha$ for \emph{every} $\alpha\in\ol{L}$, so
we may express any function in $\HC^1_K$ through a linear combination of the excitation operators \emph{and} the identity $I$. More precisely, if
$$
\Psi=\sum_{\alpha\in L} c_\alpha\Phi_\alpha= c_0\Phi_0 + \sum_{\alpha\in\ol{L}} c_\alpha\Phi_\alpha, \quad\text{then}\quad \Psi=\Bigg[c_0 I + \sum_{\alpha\in\ol{L}} c_\alpha X_\alpha\Bigg]\Phi_0,
$$
for some scalars $\{c_\alpha\}_{\alpha\in L}$.
Recall that in \cref{seccicc} we assumed the intermediate normalization condition $\dua{\Psi}{\Phi_0}=1$, which implies $c_0=1$.
There is a one-to-one correspondence between
functions $\Psi\in\HC^{1,\perp}_K$ and the cluster operators $C_\Psi$ defined as
\begin{equation}\label{clusterop}
C_\Psi=\sum_{\alpha\in\ol{L}} c_\alpha X_\alpha, \quad\text{where}\quad c_\alpha=\dua{\Psi}{\Phi_\alpha}.
\end{equation}
It is not clear, however, that $C_\Psi\in \mathcal{L}(\HC^1_K,\HC^1_K)$. See \cref{clusH1} below for the precise statement of this nontrivial fact.
Also, if the excitation graph does not contain every edge of the form $(0,\alpha)$---which is typically the case if some truncation is used---then
 it is \emph{not} possible to assign a cluster operator \cref{clusterop} to every $\Psi\in\HC^{1,\perp}_K$.
\end{remark}

The following important result makes the aforementioned correspondence between functions and cluster operators precise.
\begin{theorem}\cite[Theorem 4.1 and Lemma 5.1]{rohwedder2013continuous}\label{clusH1}
Fix $\Psi\in\HC^{1,\perp}$. Then, the following hold true.
\begin{enumerate}
\item The cluster operator $C_\Psi$ \cref{clusterop} satisfies $C_\Psi\in \mathcal{L}(\HC^1,\HC^1)$.
Furthermore, there is a constant $b>0$ independent of $\Psi$ such that
$$
\|\Psi\|_{\HC^1}\le \|C_\Psi\|_{\mathcal{L}(\HC^1,\HC^1)}\le b\|\Psi\|_{\HC^1}.
$$
\item $C_\Psi^\dag\in \mathcal{L}(\HC^1,\HC^1)$, and there is a constant $b'>0$ independent of $\Psi$ such that
$$
\|C_\Psi^\dag\|_{\mathcal{L}(\HC^1,\HC^1)}\le b'\|\Psi\|_{\HC^1},
$$
and there cannot be a uniform lower bound in terms of $\|\Psi\|_{\HC^1}$.
\item $C_\Psi$ can be extended to $\mathcal{L}(\HC^{-1},\HC^{-1})$.
\end{enumerate}
\end{theorem}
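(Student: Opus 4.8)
The plan is to reduce every claim to the explicit combinatorial action of the excitation operators of \cref{exopmain}, and then to separate the zeroth-order ($\LC^2$) behaviour from the first-order (kinetic) behaviour of the $\HC^1$-norm; throughout I work on the dense linear span of the Slater basis $\mathfrak{B}_K$ and pass to the limit by density. The first observation I would record is that $X_\alpha\Phi_0=\Phi_\alpha$ for every $\alpha\in\ol{L}$, so the defining formula \cref{clusterop} gives $C_\Psi\Phi_0=\Psi$ exactly. Testing the operator norm on the single vector $\Phi_0$ then yields $\|C_\Psi\|_{\mathcal{L}(\HC^1,\HC^1)}\ge\|\Psi\|_{\HC^1}/\|\Phi_0\|_{\HC^1}$, which is the lower bound of (1) (the constant being fixed once $\Phi_0$ is); the real content of (1) is thus the upper bound.

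For the upper bound I would first settle the $\LC^2$ estimate. Decompose $C_\Psi=\sum_{r=1}^N(C_\Psi)_r$ by excitation rank as in \cref{clusterrk}; each $(C_\Psi)_r$ is an $r$-fold excitation operator whose amplitudes form the rank-$r$ block of $\Psi$. Since the determinants $\mathfrak{B}_K$ are orthonormal and, by \cref{abinvgraph}, the targets $\alpha\vee\beta$ are pairwise distinct for a fixed source $\beta$, each $(C_\Psi)_r$ is bounded on $\LC^2$ by a dimensional constant times the Hilbert--Schmidt norm of its amplitude tensor, that is by $\|\Psi\|$. Summing the finitely many ranks (cf. \cref{clusnil}) gives $\|C_\Psi\|_{\mathcal{L}(\LC^2,\LC^2)}\le C_N\|\Psi\|\le C_N\|\Psi\|_{\HC^1}$.

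To upgrade this to $\HC^1$ I would exploit the quadratic-form identity $\|\Phi\|_{\HC^1}^2=\dua{(I+\mathcal{T})\Phi}{\Phi}$ on $\HC^1_K$, where $\mathcal{T}=\sum_{k=1}^N(-\lapl_{\vec{x}_k})$ is the total kinetic energy, i.e. the one-body second quantization of $-\lapl$. Writing $\|C_\Psi\Phi\|_{\HC^1}^2=\|C_\Psi\Phi\|^2+\|\mathcal{T}^{1/2}C_\Psi\Phi\|^2$ and splitting $\mathcal{T}^{1/2}C_\Psi=C_\Psi\mathcal{T}^{1/2}+[\mathcal{T}^{1/2},C_\Psi]$, the first summand is controlled by the $\LC^2$-bound together with $\|\mathcal{T}^{1/2}\Phi\|\le\|\Phi\|_{\HC^1}$, and the whole estimate reduces to the commutator bound $\|[\mathcal{T}^{1/2},C_\Psi]\|_{\mathcal{L}(\LC^2,\LC^2)}\le b'\|\Psi\|_{\HC^1}$. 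This is the step I expect to be the main obstacle. Choosing the orbital basis to diagonalize the one-particle $H^1$-metric within the occupied and virtual blocks, $\mathcal{T}^{1/2}$ becomes diagonal on $\mathfrak{B}_K$, and a difference-of-square-roots estimate shows that $[\mathcal{T}^{1/2},X_\alpha]$ costs only the kinetic energy $\sum_{p\in\virt{}{\alpha}}\|\grad\phi_p\|^2$ of the virtuals created by $\alpha$; this is exactly the weight that enters $\|\Psi\|_{\HC^1}^2=\sum_\alpha|c_\alpha|^2\big(1+\sum_{p\in\alpha}\|\grad\phi_p\|^2\big)+(\text{occupied--virtual coupling})$, so the commutator norm aggregates to at most a constant multiple of $\|\Psi\|_{\HC^1}$ --- and manifestly \emph{not} of $\|\Psi\|$, which is precisely why the estimate closes in $\HC^1$ but cannot in $\LC^2$. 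The delicate remainder is the occupied--virtual coupling of the kinetic form (the off-diagonal part surviving block-diagonalization) together with the nonlocality of $\mathcal{T}^{1/2}$, both of which must be absorbed into this same $\HC^1$-weighting.

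For (2) the adjoint $C_\Psi^\dag=\sum_\alpha c_\alpha(X_\alpha)^\dag$ is handled by the identical rank-decomposition-plus-commutator scheme, now using the de-excitation calculus of \cref{deexthm}, yielding $\|C_\Psi^\dag\|_{\mathcal{L}(\HC^1,\HC^1)}\le b'\|\Psi\|_{\HC^1}$. The absence of a uniform lower bound is where excitation and de-excitation genuinely part: \cref{deexthm}(iii) gives $C_\Psi^\dag\Phi_0=0$, so the natural test vector is annihilated, and I would make this quantitative with a family $\Psi_n$ supported on single excitations into virtual orbitals of diverging kinetic energy, normalized by $\|\Psi_n\|_{\HC^1}=1$. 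Because a de-excitation lowers the rank and hence never amplifies the kinetic energy, $\|C_{\Psi_n}^\dag\|_{\mathcal{L}(\HC^1,\HC^1)}$ is controlled by the $\LC^2$-mass $\|\Psi_n\|$, which tends to $0$; thus no lower bound in terms of $\|\Psi\|_{\HC^1}$ can hold, in accordance with the nonnormality of the $X_\alpha$ noted after \cref{deexthm}. Finally, (3) is pure Gelfand-triple duality (\cref{gelfandrmrk}): for $F\in\HC^{-1}$ I define $C_\Psi F\in\HC^{-1}$ by $\dua{C_\Psi F}{\Phi}:=\dua{F}{C_\Psi^\dag\Phi}$ for all $\Phi\in\HC^1$; since $C_\Psi^\dag\in\mathcal{L}(\HC^1,\HC^1)$ by (2), the right-hand side is a bounded functional of $\Phi$, so $C_\Psi$ extends to $\mathcal{L}(\HC^{-1},\HC^{-1})$ with $\|C_\Psi F\|_{\HC^{-1}}\le\|C_\Psi^\dag\|_{\mathcal{L}(\HC^1,\HC^1)}\|F\|_{\HC^{-1}}$, and \cref{deexadj} guarantees this agrees with the $\LC^2$ action.
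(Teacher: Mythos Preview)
The paper does not prove this theorem at all: it is stated with an explicit citation to \cite[Theorem 4.1 and Lemma 5.1]{rohwedder2013continuous} and no proof is supplied. There is therefore nothing in the present paper to compare your argument against; the benchmark is the original Rohwedder--Schneider proof.

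Measured against that, your outline has the right overall architecture (lower bound by testing on $\Phi_0$, rank decomposition for the $\LC^2$ bound, dual extension for (3) via (2)), but the central step of (1) contains a genuine gap. You propose to pass from $\LC^2$ to $\HC^1$ via the commutator $[\mathcal{T}^{1/2},C_\Psi]$, and you yourself flag the two obstructions: $\mathcal{T}^{1/2}$ is \emph{not} a one-body operator, so it is not diagonal on $\mathfrak{B}_K$ in any orbital basis, and the occupied--virtual off-diagonal blocks of the kinetic form do not vanish under the block-diagonalization you describe. Your sentence ``both of which must be absorbed into this same $\HC^1$-weighting'' is precisely the missing argument, not a completed one; without it the commutator bound is unproved. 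The Rohwedder proof avoids this entirely: it never takes a square root of $\mathcal{T}$, but estimates $\|\grad_{\vec{x}_k}(C_\Psi\Phi)\|^2$ directly, exploiting that each $\grad_{\vec{x}_k}$ is a genuine one-particle operator and that the product rule on Slater determinants produces only controllable single-orbital substitutions. The resulting combinatorics (counting how many $\alpha\preceq\beta$ contribute for fixed $\beta$) is what yields the $N$-dependent constant $b$.

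A second, smaller issue: in the counterexample for (2) you assert that ``a de-excitation lowers the rank and hence never amplifies the kinetic energy''. This heuristic is not obviously correct as stated---$C_\Psi^\dag$ applied to a high-rank determinant can land on determinants whose occupied orbitals carry arbitrary kinetic energy---and in any case you need a quantitative bound $\|C_{\Psi_n}^\dag\|_{\mathcal{L}(\HC^1,\HC^1)}\to 0$, not merely the qualitative observation $C_{\Psi_n}^\dag\Phi_0=0$. The construction in \cite{rohwedder2013continuous} is more explicit about which sequence works.
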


Next, we consider the so-called \emph{exponential Ansatz}, which is the representation
$$
I+C=e^{T}, \quad\text{where}\quad T=\sum_{\alpha\in\Xi(G^\full)} t_\alpha X_\alpha\in\VF(G^\full),
$$
and $C\in\VF(G^\full)$.
Here, $e^{T}$ is simply a finite sum due to the nilpotency of $T$, i.e.
$$
e^{T}=I+T+\tfrac{1}{2!}T^2+\ldots+\tfrac{1}{N!} T^N.
$$
The inverse of the exponential should be the logarithm, as one would expect.
\begin{theorem}\cite[Lemma 5.2]{rohwedder2013continuous}\label{explog}
For any cluster operator $C\in\VF(G^{\full})$ there exists a unique cluster operator $T\in\VF(G^{\full})$, such that $e^{T}=I+C$. Furthermore,
$$
T=\log(I+C)=C-\tfrac{1}{2}C^2+\tfrac{1}{3}C^3-\ldots+\tfrac{(-1)^{N-1}}{N}C^N.
$$
Moreover, the exponential map is a bijection between 
$$
\mathcal{S}=\Big\{ S \in \mathcal{L}(\HC^1,\HC^1) : S=\sum_{\alpha\in\ol{L}} s_\alpha X_\alpha \Big\}\quad\text{and}\quad I+\mathcal{S}.
$$
Furthermore, the result also holds true if $\mathcal{L}(\HC^1,\HC^1)$ is replaced with $\mathcal{L}(\HC^{-1},\HC^{-1})$.
\end{theorem}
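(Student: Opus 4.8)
The plan is to treat this as a statement about a commutative nilpotent algebra and to transport the classical formal-power-series identities relating $\exp$ and $\log$ into the present operator setting. The key structural inputs are already available: by \cref{subalgebratrans} applied to the transitive graph $G^\full$, the space $\VF(G^\full)$ is a commutative algebra, and by \cref{clusnil} every $T\in\VF(G^\full)$ satisfies $T^{N+1}=0$. Consequently both series defining $e^T$ and $\log(I+C)$ truncate at the $N$-th power and, the algebra being closed under products and finite linear combinations, define elements of $\VF(G^\full)$ again. This already shows that the candidate $T=\sum_{k=1}^N \tfrac{(-1)^{k-1}}{k}C^k$ lies in $\VF(G^\full)$, so only the inverse relations and uniqueness remain.

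First I would record the elementary but crucial observation that, for a fixed nilpotent $T$ with $T^{N+1}=0$ in this commutative algebra, the evaluation map $f(z)\mapsto f(T)$ from formal power series $\mathbb{R}[[z]]$ into $\mathcal{L}(\HC^1,\HC^1)$ (given by the finite sum $\sum_{k=0}^N a_k T^k$) is a unital algebra homomorphism: commutativity together with $T^{N+1}=0$ makes the truncation compatible with the Cauchy product, and the same holds for evaluation at $C$. Moreover, substitution is compatible with composition of power series whose inner term has vanishing constant coefficient, because $g(T)$ is again nilpotent in $\VF(G^\full)$ whenever $g(0)=0$. With this homomorphism property in hand, the classical formal identities $\exp(\log(1+z))=1+z$ and $\log(\exp(z))=z$ in $\mathbb{R}[[z]]$ evaluate to the genuine operator identities $e^{\log(I+C)}=I+C$ and $\log(e^T)=T$.

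These two identities deliver everything at once: existence (the stated truncated series $T=\log(I+C)$ works), the explicit logarithm formula, and uniqueness, since if $e^{T}=e^{T'}=I+C$ then $T=\log(e^T)=\log(I+C)=\log(e^{T'})=T'$. For the bijection claim I would note that $\mathcal{S}$ is itself a commutative nilpotent algebra: by \cref{exopthm}(ii) the set $\{0\}\cup\{\pm X_\alpha\}_{\alpha\in\Xi(G^\full)}$ is multiplicatively closed with $\Xi(G^\full)=\ol{L}$, so any product of elements of $\mathcal{S}$ is a bounded operator of the form $\sum_{\gamma\in\ol{L}} u_\gamma X_\gamma$ and hence stays in $\mathcal{S}$, while $S^{N+1}=0$ for $S\in\mathcal{S}$ by the rank argument of \cref{clusnil}. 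Thus $\exp$ maps $\mathcal{S}$ into $I+\mathcal{S}$ and $\log$ maps $I+\mathcal{S}$ into $\mathcal{S}$, and the identities above show these are mutually inverse, so $\exp\colon\mathcal{S}\to I+\mathcal{S}$ is a bijection. Finally, the $\HC^{-1}$ statement follows because, by \cref{clusH1}(3), the excitation operators extend to bounded operators on $\HC^{-1}$; the algebraic identities (commutativity, nilpotency, multiplicative closedness) persist under this extension by boundedness and the density of $\mathfrak{B}_K$, so the identical argument applies verbatim.

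The step I expect to be the genuine, if modest, obstacle is making the passage from formal to operator identities watertight, namely verifying carefully that evaluation at a nilpotent element is a homomorphism compatible with power-series composition; everything else is bookkeeping once commutativity and the uniform bound $T^{N+1}=0$ are in place. A secondary point requiring care is the case $K=\infty$, where one must confirm that the objects involved are honestly bounded operators; but since every expression reduces to a finite sum of at most $N$-fold products of the bounded operators $X_\alpha$, boundedness is automatic and the closure in the definition of $\VF(G^\full)$ causes no difficulty.
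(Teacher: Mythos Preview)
Your proposal is correct and follows the natural argument: reduce to the formal power-series identities $\exp(\log(1+z))=1+z$ and $\log(\exp(z))=z$, and observe that evaluation at a nilpotent element of a commutative algebra is a homomorphism compatible with composition, so these identities hold verbatim for operators. The uniqueness and bijection statements then fall out immediately, and the $\HC^{-1}$ version is handled by the same algebra applied to the extended operators from \cref{clusH1}(3).

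Note, however, that the paper does not supply its own proof of this statement: it is quoted directly from \cite[Lemma~5.2]{rohwedder2013continuous}. The argument there is essentially the same as yours---nilpotency truncates both series, and the algebraic identities between $\exp$ and $\log$ give the inverse relation---so your approach is aligned with the intended one. One small caution: in your remark about the $K=\infty$ case you write that ``every expression reduces to a finite sum of at most $N$-fold products of the bounded operators $X_\alpha$''; more precisely, $C$ itself is an infinite sum of $X_\alpha$'s, but it is already assumed bounded, and what you need is only that $C^k$ is bounded (as a product of bounded operators) and lies in $\VF(G^\full)$ (since the latter is an algebra by \cref{subalgebratrans}). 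With that phrasing adjusted, the argument is complete.
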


It is important to note that if some proper excitation subgraph $G=(L,E)$ is considered instead of $G^{\full}$, the previous result does \emph{not} hold.
For instance, if $G(\mathrm{SD})$ is considered, then it might not be possible to represent $I+C$ as $e^T$, where $C\in\VF(G^\full)$ and $T\in\VF(G)$.
This in particular implies that wavefunctions of the form $e^{T}\Phi_{0}$ where $T\in\VF(G)$ is \emph{not} the totality
 of intermediately normalized wavefunctions.

In the multireference (MR) case, the analogue of the exponential Ansatz is called the \emph{Jeziorski--Monkhorst (JM) Ansatz}, see \cref{jmder} below. 
In the JM-MRCC method, $M$ wavefunctions,
say $\Psi_1,\ldots,\Psi_M$ are ``targeted'', and the expansion
\begin{equation}\label{jmansatz}
\Psi_j=\sum_{m=1}^M a_j^{(m)} e^{T^{(m)}}\Phi_{0_m}, \quad\text{where}\quad a_j^{(m)}\in\RR,
\end{equation}
is utilized. In the untruncated case, suppose that $\Psi_j=(I+C^{(j)})\Phi_{0_j}=e^{T^{(j)}}\Phi_{0_j}$, as above, for all $j=1,\ldots,M$.
Then the JM expansion coefficients $a_j^{(m)}$ of $\Psi_j$ are simply $\delta_{jm}$.

\subsection{Cluster amplitude spaces}\label{secamp}

The linear combination coefficients of the excitation operators making up a cluster operator are called \emph{cluster amplitudes}.
Let $\ell^2(G)$ denote Hilbert space of square summable real-, or complex-valued sequences indexed by the edge labels of the excitation graph $G$, i.e.
$$
\ell^2(G)=\{ t=(t_\alpha)_{\alpha\in \Xi(G)} : \|t\|_{\ell^2}<\infty\}.
$$
The (real or complex) Hilbert space 
$$
\mathbb{V}(G)=\{ t\in \ell^2(G) : \|T\Phi_{0}\|_{\HC^1}<\infty\},
$$
endowed with the $\HC^1$-inner product $\dua{t}{s}_\VV=\dua{T\Phi_0}{S\Phi_0}_{\HC^1}$
is called the \emph{(cluster) amplitude space corresponding to $G$}. Nevertheless, from now on we use the convention that the unmarked 
$\dua{t}{s}=\dua{T\Phi_0}{S\Phi_0}_{\LC^2}$ and $\|\cdot\|$ refers to the $\ell^2$-inner product and $\ell^2$-norm. 
Clearly, $\|t\|\le \|t\|_\VV$. 
\begin{remark}
Similarly to $\HC^1\hookrightarrow\LC^2\hookrightarrow\HC^{-1}$, the spaces $\VV(G)\hookrightarrow \ell^2(G)\hookrightarrow \VV(G)^*$
also form a Gelfand triple.
\end{remark}

It is clear that the space of cluster operators $\VF(G)$ is canonically isomorphic to $\VV(G)$ via
$$
\VF(G)\ni \sum_{\alpha\in\Xi(G)} c_\alpha X_\alpha =C\mapsto c=(c_\alpha)_{\alpha\in\Xi(G)}\in\VV(G).
$$
As customary in CC theory, we will never explicitly denote this isomorphism, and instead use capital letters $S,T,U,V,W$,~etc. to denote the cluster operators and small letters $s,t,u,v,w$,~etc. 
to denote their corresponding cluster amplitudes.

Furthermore, to every amplitude space $\VV(G)$ there corresponds a \emph{functional amplitude space} $\VC(G)\subset\HC^{1,\perp}$ 
through the $(\ell^2,\LC^2)$-isometric isomorphism $\VV(G)\to \VC(G)$ given by
$$
\VV(G)\ni c \mapsto C\Phi_0=\sum_{\alpha\in \Xi(G)} c_\alpha \Phi_\alpha\in \VC(G).
$$
Clearly, an appropriate subset of the Slater determinant basis $\mathfrak{B}_K$ (see \cref{detbas}) forms a basis of the functional amplitude space $\VC(G)$.

Given a closed subspace $\mathfrak{U}\subset \VC(G)$, we will sometimes use the orthogonal projector $\Pi_{\mathfrak{U}} : \LC^2\to\mathfrak{U}\subset\LC^2$ onto $\mathfrak{U}$,
defined as
$$
\dua{\Pi_{\mathfrak{U}} \Psi}{\Phi}=\dua{\Psi}{\Phi}, \quad \text{for all} \quad \Psi\in\LC^2, \; \Phi\in\mathfrak{U}.
$$
Hence, the inclusion map $I_{\mathfrak{U}} : \mathfrak{U} \to \LC^2$, given by $I_{\mathfrak{U}}\Phi=\Phi$ for all $\Phi\in\mathfrak{U}$ satisfies 
$I_{\mathfrak{U}}^\dag=\Pi_{\mathfrak{U}}$.

We continue by recalling an important notion due to \cite{schneider2009analysis}.
\begin{definition}
The excitation graph $G$ is said to be \emph{excitation complete}, if $\alpha^\perp\wedge\beta\in \Xi(G)$
for all $\alpha,\beta\in\Xi(G)$ with  $(\alpha^\perp\wedge\beta,\beta)\in E$ and $\alpha\neq\beta$.
\end{definition}

It is easy to see using \cref{rkdeex}, that commonly used rank-truncated graphs such as $G(1,2,\ldots,\rho)$ and $G(\mathrm{D})$ are excitation complete.
%The following result is used for proving that two different formulations of the CC method are equivalent \cref{unlinkedcc}.

\begin{proposition}\cite[Lemma 5.5]{schneider2009analysis}\label{exexp}
Suppose that $G$ is excitation complete, let $\VC=\VC(G)$ and $\VC_0=\Span\{\Phi_0\}\oplus \VC$. Fix $t\in\VV$.
\begin{enumerate}[label=\normalfont(\roman*)]
\item The linear mappings
$e^{\pm T^\dag} I_{\VC_0} : \VC_0\to \VC_0$
are bijective.
\item The linear mappings 
$\Pi_{\VC} e^{\pm T^\dag} I_{\VC} : \VC\to \VC$
are surjective.
\end{enumerate}
\end{proposition}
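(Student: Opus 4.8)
The plan is to exploit the fact that the de-excitation operator $T^\dag=\sum_{\alpha\in\Xi(G)} t_\alpha X_\alpha^\dag$ strictly lowers the rank and is therefore nilpotent. Indeed, by \cref{rkdeex} each $X_\alpha^\dag$ either annihilates a determinant or decreases its rank by $\rk(\alpha)\ge 1$, so $(T^\dag)^{N+1}=0$ exactly as in \cref{clusnil}. Consequently $e^{\pm T^\dag}=\sum_{k=0}^{N}\tfrac{(\pm 1)^k}{k!}(T^\dag)^k$ are genuine polynomials in $T^\dag$, and since all de-excitation operators commute (\cref{deexthm}(i)) they satisfy $e^{T^\dag}e^{-T^\dag}=e^{-T^\dag}e^{T^\dag}=I$ on all of $\HC^1_K$; thus $e^{\pm T^\dag}$ are mutually inverse bijections of $\HC^1_K$. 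The whole proposition will then follow once I show that $\VC_0$ is invariant under $T^\dag$.

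This invariance step is the heart of the argument and the only place where excitation completeness is used. It suffices to check it on the spanning determinants $\Phi_\beta$ with $\beta\in\{0\}\cup\Xi(G)$. For $\beta=0$ we have $T^\dag\Phi_0=0$ by \cref{deexthm}(iii), so $e^{\pm T^\dag}\Phi_0=\Phi_0$. For $\beta\in\Xi(G)$ I would expand $T^\dag\Phi_\beta=\sum_\alpha t_\alpha X_\alpha^\dag\Phi_\beta$ and inspect each nonzero term: by definition $X_\alpha^\dag\Phi_\beta=\pm\Phi_{\alpha^\perp\wedge\beta}$ precisely when $(\alpha^\perp\wedge\beta,\beta)\in E$. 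The term with $\alpha=\beta$ produces $\Phi_0\in\VC_0$ by \cref{deexthm}(ii), while for $\alpha\neq\beta$ the excitation-completeness hypothesis guarantees $\alpha^\perp\wedge\beta\in\Xi(G)$, so $\Phi_{\alpha^\perp\wedge\beta}\in\VC\subset\VC_0$. Hence $T^\dag\Phi_\beta\in\VC_0$, and by iteration $\VC_0$ is invariant under every power of $T^\dag$, thus under $e^{\pm T^\dag}$.

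With invariance in hand, part (i) is immediate: $e^{\pm T^\dag}$ are mutually inverse bijections of $\HC^1_K$ that both carry $\VC_0$ into itself, so their restrictions $e^{\pm T^\dag}I_{\VC_0}:\VC_0\to\VC_0$ remain mutually inverse, hence bijective. For part (ii) I would use the splitting $\VC_0=\Span\{\Phi_0\}\oplus\VC$, in which $e^{\pm T^\dag}$ is block upper-triangular: it fixes $\Phi_0$ (so the $\VC$-component of $e^{\pm T^\dag}\Phi_0$ vanishes) and preserves $\VC_0$, the $(2,2)$-block being exactly $\Pi_{\VC}e^{\pm T^\dag}I_{\VC}$. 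Bijectivity of the full operator on $\VC_0$ then forces the diagonal block to be bijective, in particular surjective. More directly, one may write $\Pi_{\VC}e^{\pm T^\dag}I_{\VC}=\id_{\VC}+N_\pm$ with $N_\pm=\Pi_{\VC}(e^{\pm T^\dag}-I)I_{\VC}$; since $e^{\pm T^\dag}-I$ strictly lowers the rank, $N_\pm$ sends a rank-$r$ determinant of $\VC$ into ranks $<r$ and is therefore nilpotent, so $\id_{\VC}+N_\pm$ is invertible via its terminating Neumann series.

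The main obstacle is precisely the invariance step: it is the requirement that a de-excitation $\alpha^\perp\wedge\beta$ of an index $\beta\in\Xi(G)$ land back in $\Xi(G)\cup\{0\}$, which is the combinatorial content of excitation completeness together with the boundary case $\alpha=\beta$ supplied by \cref{deexthm}(ii). Everything else is routine nilpotent and block-triangular bookkeeping, and boundedness of all operators involved is automatic from \cref{clusH1}, since finite sums and products of bounded operators are bounded.
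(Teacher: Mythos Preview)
Your argument is correct and follows essentially the same route as the paper: the invariance of $\VC_0$ under $T^\dag$ is precisely the content of \cref{excomplemm}, and the paper then concludes (i) from $(e^{T^\dag})^{-1}=e^{-T^\dag}$ and says (ii) ``follows easily,'' which is exactly your block-triangular (or nilpotent-perturbation) observation. You have simply spelled out the details the paper leaves implicit.
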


The result follows easily from the next lemma.

\begin{lemma}\cite[Lemma 5.4]{schneider2009analysis}\label{excomplemm}
Suppose that $G$ is excitation complete. Then, for every $\alpha,\beta\in\Xi(G)$ we have $X_\alpha^\dag \Phi_\beta\in \VC(G)\cup \{\Phi_{0}\}$.
\end{lemma}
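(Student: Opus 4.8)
The plan is a direct case distinction driven by the defining formula for the de-excitation operator. For fixed $\alpha,\beta\in\Xi(G)$, the value $X_\alpha^\dag\Phi_\beta$ is, by definition, either $0$ (precisely when $(\alpha^\perp\wedge\beta,\beta)\notin E$) or the scalar multiple $\sigma(\alpha,\alpha^\perp\wedge\beta)\Phi_{\alpha^\perp\wedge\beta}$ (when $(\alpha^\perp\wedge\beta,\beta)\in E$). Since $\VC(G)$ is a linear subspace it contains $0$, so the first alternative trivially lands in $\VC(G)\cup\{\Phi_0\}$, and the whole argument reduces to identifying the index $\alpha^\perp\wedge\beta$ in the nonzero alternative and checking that $\Phi_{\alpha^\perp\wedge\beta}$ lies in $\VC(G)\cup\{\Phi_0\}$.

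In the nonzero alternative I would split according to whether $\alpha=\beta$. When $\alpha\neq\beta$, the excitation-completeness hypothesis applies verbatim: having $\alpha,\beta\in\Xi(G)$, $(\alpha^\perp\wedge\beta,\beta)\in E$ and $\alpha\neq\beta$, it yields $\alpha^\perp\wedge\beta\in\Xi(G)$, so that $\Phi_{\alpha^\perp\wedge\beta}\in\VC(G)$ by the definition of the functional amplitude space, and hence $X_\alpha^\dag\Phi_\beta\in\VC(G)$ up to the scalar sign.

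The diagonal case $\alpha=\beta$ is the only genuine subtlety, and it is exactly the situation the completeness hypothesis deliberately excludes; this is where I would compute $\alpha^\perp\wedge\alpha$ by hand using the meet operation on $2^\Lambda$ from \cref{boolean}. Decomposing $\alpha^\perp=\Lambda\setminus\alpha$ into its occupied part $0\setminus\ul{\alpha}$ and its virtual part $0^c\setminus\ol{\alpha}$, the occupied part of $\alpha^\perp\wedge\alpha$ is $(0\setminus\ul{\alpha})\cup\ul{\alpha}=0$ while its virtual part is $(0^c\setminus\ol{\alpha})\cap\ol{\alpha}=\emptyset$, so $\alpha^\perp\wedge\alpha=0$ is the reference state. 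Consequently $X_\alpha^\dag\Phi_\alpha$ equals $\Phi_0$ (the sign $\sigma(\alpha,0)$ being $+1$ since $\ol{0}=\emptyset$) if the edge $(0,\alpha)$ is present, and $0$ otherwise; in either outcome it lies in $\VC(G)\cup\{\Phi_0\}$, consistently with \cref{deexthm}(ii). Collecting the three cases proves the claim. The main — indeed essentially the only — obstacle is recognizing that the diagonal case is \emph{not} covered by excitation completeness and must be settled by the explicit Boolean identity $\alpha^\perp\wedge\alpha=0$; this is precisely why the target set has to be enlarged from $\VC(G)$ to $\VC(G)\cup\{\Phi_0\}$.
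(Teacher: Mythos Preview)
Your proposal is correct and follows essentially the same approach as the paper: a case split on whether the relevant edge lies in $E$, then on $\alpha\neq\beta$ versus $\alpha=\beta$, invoking excitation completeness in the former and the identity $\alpha^\perp\wedge\alpha=0$ in the latter. The paper's version is terser---it cites \cref{deexthm}(ii) for the formula and simply asserts that the $\alpha=\beta$ case yields $\Phi_0$---while you spell out the Boolean computation explicitly, but the structure is identical.
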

\begin{proof}
From \cref{deexthm} (ii), we have 
$$
X_\alpha^\dag \Phi_\beta = \sigma(\alpha,\alpha^\perp \wedge \beta) \Phi_{\alpha^\perp \wedge \beta},
$$
if $(\alpha^\perp \wedge \beta, \beta)\in E$. If $\alpha\neq\beta$, then right-hand side is in $\VC(G)$, since $G$ is excitation complete.
If $\alpha=\beta$, then the right-hand side is simply $\Phi_{0}$. 
\end{proof}

\begin{proof}[Proof of \cref{exexp}]
By linearity, \cref{excomplemm} implies that the mapping $T^\dag : \VC_0(G)\to \VC_0(G)$ and so $e^{\pm T^\dag} : \VC_0(G)\to \VC_0(G)$ as well.
But $(e^{T^\dag})^{-1}=e^{-T^\dag}$, which proves (i). Part (ii) follows easily from this.
\end{proof}

\section{Derivation of the Coupled-Cluster Equations}\label{ccdersec}
In this section, we give derivations of the SRCC-, and a variant of the MRCC equations. The approach presented here is based on \cite{wilson2013methods}.
 We would like to stress that the discussion only applies to the \emph{full} (that is, untruncated) CC methods.

The essence of the following theorem seems to be well-known in the physics and quantum chemistry literature, and the method itself
 is generally attributed to C. Bloch \cite{bloch1958theorie}, who devised it in the context of perturbation theory.
 
\begin{theorem}\label{blochthm}
Let $\mathfrak{H}$ and $\mathfrak{L}$ be (real or complex) Hilbert spaces so that they form a Gelfand triple: $\mathfrak{H}\subset\mathfrak{L}\subset\mathfrak{H}^*$.
Let $\ham:\mathfrak{H}\to\mathfrak{H}^*$ be a bounded operator.
Let $\mathfrak{M},\mathfrak{N}\subset \mathfrak{H}$ be any pair of closed subspaces so that the following \emph{complementarity condition} holds:
\begin{equation}\label{compcond}
\mathfrak{M} \oplus \mathfrak{N}^\perp=\mathfrak{H}.
\end{equation}
Then the following are equivalent.
\begin{enumerate}[label=\normalfont(\roman*)]
\item $\mathfrak{M}\subset \mathfrak{H}$ is \emph{weakly $\ham$-invariant:} for every $\Phi\in\mathfrak{M}$ there exists $\wt{\Phi}\in\mathfrak{M}$ such that
$\dua{\ham\Phi}{\Phi'}=\dua{\wt{\Phi}}{\Phi'}$ for all $\Phi'\in\mathfrak{H}$.
\item  (weak Bloch equation) There holds 
\begin{equation}\label{bloch}
\dua{\ham\Xi\Phi}{(I-\Xi^\dag)\Phi'}=0\quad\text{for all}\quad \Phi\in\mathfrak{N},\Phi'\in\mathfrak{N}^\perp,
\end{equation}
where $\Xi : \HC\to\HC$ denotes the (oblique) projector onto $\mathfrak{M}$ along $\mathfrak{N}^\perp$, i.e. 
$\Ran\Xi=\mathfrak{M}$ and  $\ker\Xi=\mathfrak{N}^\perp$.
\end{enumerate}
Furthermore, if
\begin{equation}\label{blochm}
\mathfrak{M}=\Span\{\Psi_j\in \mathfrak{H} : j=1,\ldots,J\}, \quad\text{where}\quad \dua{\ham\Psi_j}{\ol{\Phi}}=\ene_j\dua{\Psi_j}{\ol{\Phi}}\quad (\ol{\Phi}\in\mathfrak{H})
\end{equation}
for some $\ene_j\in\CC$, then with the \emph{effective Hamiltonian} $\ham^\eff:\mathfrak{N}\to\mathfrak{N}$, given by 
$\dua{\ham^\eff\Phi}{\Phi'}=\dua{\ham\Xi\Phi}{\Phi'}$ for all $\Phi,\Phi'\in\mathfrak{N}$, we have
\begin{equation}\label{effeig}
\dua{\ham^\eff\Pi\Psi_j}{\Phi}=\ene_j\dua{\Pi\Psi_j}{\Phi}\quad\text{for all}\quad \Phi\in\mathfrak{N},
\end{equation}
where $\Pi : \HC\to\HC$ denotes the $\LC$-\emph{orthogonal} projector onto $\mathfrak{N}$, i.e. $\Ran\Pi=\mathfrak{N}$ and $\ker\Pi=\mathfrak{N}^\perp$.
\end{theorem}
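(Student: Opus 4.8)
The plan is to reduce everything to a few algebraic identities satisfied by the two projectors $\Xi$ and $\Pi$, and then to read off both the equivalence of (i) and (ii) and the eigenvalue relation \eqref{effeig} from the geometry of the splitting $\mathfrak{H}=\mathfrak{M}\oplus\mathfrak{N}^\perp$. First I would record the elementary consequences of the definitions. Since $\ker\Xi=\ker\Pi=\mathfrak{N}^\perp$ while $\Ran\Xi=\mathfrak{M}$ and $\Ran\Pi=\mathfrak{N}$, a one-line check gives $\Xi\Pi=\Xi$, $\Pi\Xi=\Pi$ and $\Xi|_{\mathfrak{M}}=\id$. Because $\mathfrak{M}$ (and $\mathfrak{N}$) is finite-dimensional in the situation of interest, $\Xi$ is a finite-rank operator whose $\mathfrak{L}$-adjoint $\Xi^\dag$ again maps $\mathfrak{H}$ into $\mathfrak{H}$; writing $\Xi$ through a biorthogonal system built from bases of $\mathfrak{M}$ and $\mathfrak{N}$, I would verify that $\Xi^\dag$ is the projector with $\Ran\Xi^\dag=\mathfrak{N}$ and $\ker\Xi^\dag=\mathfrak{M}^\perp$ (the $\mathfrak{L}$-orthogonal complement of $\mathfrak{M}$ taken inside $\mathfrak{H}$). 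Consequently $I-\Xi^\dag$ is the projector onto $\mathfrak{M}^\perp$ along $\mathfrak{N}$, so $(I-\Xi^\dag)\Phi'\in\mathfrak{M}^\perp$ for every $\Phi'$.

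With these in hand, (i)$\Rightarrow$(ii) is immediate: for $\Phi\in\mathfrak{N}$ one has $\Xi\Phi\in\mathfrak{M}$, so weak invariance furnishes $\wt{\Phi}\in\mathfrak{M}$ with $\ham\Xi\Phi=\wt{\Phi}$ in $\mathfrak{H}^*$, and pairing against $(I-\Xi^\dag)\Phi'\in\mathfrak{M}^\perp$ yields $\dua{\ham\Xi\Phi}{(I-\Xi^\dag)\Phi'}=\dua{\wt{\Phi}}{(I-\Xi^\dag)\Phi'}=0$ by $\mathfrak{L}$-orthogonality. For (ii)$\Rightarrow$(i) I would first show that $I-\Xi^\dag$ maps $\mathfrak{N}^\perp$ \emph{onto} $\mathfrak{M}^\perp$: injectivity is clear since $\ker(I-\Xi^\dag)=\mathfrak{N}$ meets $\mathfrak{N}^\perp$ trivially, and given $\psi\in\mathfrak{M}^\perp$ a Riesz argument in the finite-dimensional space $\mathfrak{N}$ produces $n\in\mathfrak{N}$ with $\psi+n\in\mathfrak{N}^\perp$ and $(I-\Xi^\dag)(\psi+n)=\psi$. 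Hence (ii) says precisely that $\ham\Xi\Phi$ annihilates $\mathfrak{M}^\perp$ for every $\Phi\in\mathfrak{N}$; since $\Xi|_{\mathfrak{N}}\colon\mathfrak{N}\to\mathfrak{M}$ is a bijection (injective as above, surjective by equality of dimensions), this means $\ham\mu$ annihilates $\mathfrak{M}^\perp$ for all $\mu\in\mathfrak{M}$. Using $\mathfrak{H}=\mathfrak{M}\oplus\mathfrak{M}^\perp$ and Riesz representation on the finite-dimensional $\mathfrak{M}$, I would then manufacture the required $\wt{\mu}\in\mathfrak{M}$ with $\ham\mu=\wt{\mu}$ in $\mathfrak{H}^*$, which is weak invariance.

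For the last assertion, I would observe that the hypothesis \eqref{blochm} forces condition (i): the weak eigen-equations say $\ham\Psi_j=\ene_j\Psi_j$ in $\mathfrak{H}^*$ with $\Psi_j\in\mathfrak{M}$, so $\mathfrak{M}$ is automatically weakly $\ham$-invariant. The eigenvalue relation then follows by a direct computation: for $\Phi\in\mathfrak{N}$,
\[
\dua{\ham^\eff\Pi\Psi_j}{\Phi}=\dua{\ham\Xi\Pi\Psi_j}{\Phi}=\dua{\ham\Psi_j}{\Phi}=\ene_j\dua{\Psi_j}{\Phi},
\]
where I used $\Xi\Pi=\Xi$ together with $\Xi\Psi_j=\Psi_j$ (as $\Psi_j\in\mathfrak{M}$). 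Finally, since $\Pi$ is the $\mathfrak{L}$-orthogonal projector onto $\mathfrak{N}$ and $\Phi\in\mathfrak{N}$, one has $\dua{\Psi_j}{\Phi}=\dua{\Pi\Psi_j}{\Phi}$, which turns the right-hand side into $\ene_j\dua{\Pi\Psi_j}{\Phi}$, as claimed.

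I expect the main obstacle to be the Gelfand-triple bookkeeping rather than any single hard step: one must keep straight which pairing is used at each stage, verify that the $\mathfrak{L}$-adjoint $\Xi^\dag$ genuinely maps $\mathfrak{H}$ into $\mathfrak{H}$ (so that $(I-\Xi^\dag)\Phi'$ is a legitimate test vector in $\mathfrak{H}$), and justify the identifications $\Ran\Xi^\dag=\mathfrak{N}$, $\ker\Xi^\dag=\mathfrak{M}^\perp$ and $(I-\Xi^\dag)\mathfrak{N}^\perp=\mathfrak{M}^\perp$. All of these hinge on $\mathfrak{M}$ and $\mathfrak{N}$ being closed in $\mathfrak{L}$ — automatic here since they are finite-dimensional — which also supplies the Riesz representation that converts the annihilation statement in (ii) into the genuine membership $\ham\mu\in\mathfrak{M}$ demanded by (i).
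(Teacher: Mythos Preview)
Your proof follows essentially the same route as the paper's: identify $\Ran(I-\Xi^\dag)=\mathfrak{M}^\perp$ and $\Ran\Xi^\dag=\mathfrak{N}$, reduce the Bloch equation to the statement that $\ham\mu$ annihilates $\mathfrak{M}^\perp$ for every $\mu\in\mathfrak{M}$, and then invoke a Riesz-type representation to produce the required $\wt{\mu}\in\mathfrak{M}$. You are more explicit than the paper about the surjectivity of $(I-\Xi^\dag)|_{\mathfrak{N}^\perp}$ onto $\mathfrak{M}^\perp$ and of $\Xi|_{\mathfrak{N}}$ onto $\mathfrak{M}$; the paper simply asserts that the Bloch equation implies $F_\Phi(\Phi')=0$ for all $\Phi'\in\mathfrak{M}^\perp$.

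Two differences are worth flagging. First, you impose finite-dimensionality of $\mathfrak{M}$ and $\mathfrak{N}$ for the equivalence (i)$\Leftrightarrow$(ii), whereas the theorem is stated for arbitrary closed subspaces; the paper handles the Riesz step by extending the functional $\Phi'\mapsto\dua{\ham\Phi}{\Phi'}$ from $\mathfrak{H}$ to $\mathfrak{L}$ and representing it there, rather than working on the (finite-dimensional) $\mathfrak{M}$ directly. In the applications later in the paper both spaces are finite-dimensional anyway, so your restriction costs nothing in practice. Second, your derivation of the effective-Hamiltonian eigenvalue relation is more direct than the paper's: you go straight from $\Xi\Pi\Psi_j=\Psi_j$ and $\dua{\Psi_j}{\Phi}=\dua{\Pi\Psi_j}{\Phi}$ for $\Phi\in\mathfrak{N}$, whereas the paper detours through the Bloch equation and then uses $\Ran\Xi^\dag=\mathfrak{N}$ to restrict the test vectors.
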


\begin{proof}
For (i)$\Longrightarrow$(ii), note that using $\ker\Xi=\mathfrak{N}^\perp$ and $\Ran\Xi=\mathfrak{M}$, it follows from (i) that for every
 $\Phi\in\mathfrak{N}$ there exists $\wt{\Phi}\in\mathfrak{M}$ such that $\dua{\ham\Xi\Phi}{\ol{\Phi}}=\dua{\wt{\Phi}}{\ol{\Phi}}$ for all $\ol{\Phi}\in\mathfrak{H}$. 
Put $\ol{\Phi}=(I-\Xi^\dag)\Phi'$ to obtain
$$
\dua{\ham\Xi\Phi}{(I-\Xi^\dag)\Phi'}=\dua{\wt{\Phi}}{(I-\Xi^\dag)\Phi'}=0 \quad\text{for all}\quad \Phi\in\mathfrak{N},\Phi'\in\mathfrak{H},
$$
where we used that $\wt{\Phi}\in\mathfrak{M}$ and $\Ran(I-\Xi^\dag)=\mathfrak{M}^\perp$.
From this, \cref{bloch} follows.

To see (ii)$\Longrightarrow$(i), fix $\Phi\in\mathfrak{M}$ and note that \cref{bloch} implies $F_\Phi(\Phi')=0$ for all $\Phi'\in\mathfrak{M}^\perp$,
where $F_\Phi(\Phi'):=\dua{\ham\Phi}{\Phi'}$ for all $\Phi'\in\mathfrak{H}$.
Here, $F_\Phi(\cdot)$ is a bounded linear functional on the dense subspace $\mathfrak{H}\subset\mathfrak{L}$. Extend $F_\Phi$ to a bounded linear functional $\wh{F}_\Phi$ on $\mathfrak{L}$. The Riesz representation theorem implies that there is a $\wt{\Phi}\in\mathfrak{L}$ such that $\wh{F}_\Phi(\Phi')=\dua{\wt{\Phi}}{\Phi'}$ for all $\Phi'\in\mathfrak{L}$.
But $0=F_\Phi(\Phi')=\wh{F}_\Phi(\Phi')=\dua{\wt{\Phi}}{\Phi'}$ for all $\Phi'\in\mathfrak{M}^\perp$, so $\wt{\Phi}\in\mathfrak{M}^{\perp\perp}=\mathfrak{M}$.
Therefore, we constructed a $\wt{\Phi}\in\mathfrak{M}$ such that $\dua{\ham\Phi}{\Phi'}=\dua{\wt{\Phi}}{\Phi'}$ for all $\Phi'\in\mathfrak{H}$, which is what we wanted to prove.

To prove the ``furthermore'' part, first note that $\mathfrak{M}$ is weakly $\ham$-invariant. We now claim that $\Xi\Pi=\Xi$. In fact, $\Ran(I-\Pi)=\ker\Pi=\ker\Xi$, so $\Xi(I-\Pi)=0$. 
Continuing the proof, note that the second relation of \cref{blochm} is equivalent to
$$
\dua{\ham\Xi\Pi\Psi_j}{\ol{\Phi}}=\ene_j\dua{\Xi\Pi\Psi_j}{\ol{\Phi}}\quad\text{for all}\quad \ol{\Phi}\in\mathfrak{H}.
$$
Using \cref{bloch}, this can be further written as
$$
\dua{\ham\Xi\Pi\Psi_j}{\Xi^\dag\ol{\Phi}}=\ene_j\dua{\Pi\Psi_j}{\Xi^\dag\ol{\Phi}}\quad\text{for all}\quad \ol{\Phi}\in\mathfrak{H}.
$$
The desired result follows by noting that $\Ran\Xi^\dag=\mathfrak{N}$.
\end{proof}

In practice, $\mathfrak{M}$ (called the ``exact model space'') is unknown and $\mathfrak{N}$ (called the ``model space'')
is chosen in a way that it provides a ``reasonable approximation'' to $\mathfrak{M}$, i.e. that \cref{compcond}
holds. In particular, $\mathfrak{M}\subset\mathfrak{N}^\perp$ is not permitted. 
Then, the unknown ``wave operator'' $\Xi$ (hence $\mathfrak{M}$) can be determined by solving the weak Bloch equation \cref{bloch}. 
Next, the eigenvalue problem for $\ham^\eff$
is solved to obtain the energies $\ene_1,\ldots,\ene_M$ and (some of the) eigenvectors. 
%The ``physical'' eigenvectors can be obtained via the
% isomorphism $\Sigma:\mathfrak{M}\to\mathfrak{N}$.
 
\begin{remark}\leavevmode
\begin{enumerate}[label=\normalfont(\roman*)]
\item It is important to note that solving the Bloch equation only provides a weakly $\ham$-invariant subspace $\mathfrak{M}$ and it might \emph{not}
be a direct sum of (weak) eigenspaces in general. In other words, $\mathfrak{M}$ might be spanned by an incomplete set of eigenvectors.
Clearly, in such a situation some of the eigenvectors cannot be recovered through solving the eigenproblem for the effective Hamiltonian $\ham^\eff$.
\item The Bloch equation \cref{bloch} is more commonly given in the ``strong'' form ``$\Xi\ham\Xi=\ham\Xi$''.
\end{enumerate}
\end{remark}

The situation is greatly simplified, when one considers one-dimensional subspaces $\mathfrak{N}$ and $\mathfrak{M}$,
because a one-dimensional invariant subspace is always an eigenspace.

\begin{corollary}\label{blochcoro}
Let $\dim\mathfrak{N}=\dim\mathfrak{M}=1$, and set $\mathfrak{N}=\Span\{\Phi_0\}$ for some $\Phi_0\in\mathfrak{H}$.
Further, let $\mathfrak{M}=\Span\{\Psi\}$ for some $\Psi\in\mathfrak{H}$, and suppose that $\dua{\Psi}{\Phi_0}=1$.
Then, the following are equivalent.
\begin{enumerate}[label=\normalfont(\roman*)]
\item  $\dua{\ham\Psi}{\ol{\Phi}}=\ene\dua{\Psi}{\ol{\Phi}}$ for all $\ol{\Phi}\in\mathfrak{H}$ and some scalar $\ene$.
\item  $\dua{\ham\Xi\Phi_0}{(I-\Xi^\dag)\Phi'}=0$ for all $\Phi'\in\mathfrak{N}^\perp$.
\end{enumerate}
Furthermore, $\ene=\dua{\ham\Xi\Phi_0}{\Phi_0}$.
\end{corollary}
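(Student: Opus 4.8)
The plan is to read \cref{blochcoro} as the degenerate case $\dim\mathfrak{M}=\dim\mathfrak{N}=1$ (with $J=1$) of \cref{blochthm}, and to transcribe each of its ingredients into this simpler setting. First I would check that the hypotheses of \cref{blochthm} are met, i.e. the complementarity condition \cref{compcond}. Since $\mathfrak{N}=\Span\{\Phi_0\}$, the space $\mathfrak{N}^\perp=\{\Phi\in\mathfrak{H}:\langle\Phi,\Phi_0\rangle=0\}$ is the kernel of the nonzero bounded functional $\Phi\mapsto\langle\Phi,\Phi_0\rangle$, hence a closed hyperplane of codimension one. The normalization $\langle\Psi,\Phi_0\rangle=1\neq0$ forces $\Psi\notin\mathfrak{N}^\perp$, so the one-dimensional $\mathfrak{M}=\Span\{\Psi\}$ meets $\mathfrak{N}^\perp$ trivially and the two together span $\mathfrak{H}$; this is exactly \cref{compcond}. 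This is the situation the remark after \cref{blochthm} flags as the only admissible one, namely $\mathfrak{M}\not\subset\mathfrak{N}^\perp$.

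The heart of the argument is the observation, made just before the corollary, that a one-dimensional weakly invariant subspace is automatically an eigenspace. I would make this precise by showing that statement (i) of the corollary is equivalent to the weak $\ham$-invariance of $\mathfrak{M}$, i.e. statement (i) of \cref{blochthm}. Indeed, applying weak $\ham$-invariance to $\Psi$ produces $\wt{\Psi}\in\mathfrak{M}$ with $\langle\ham\Psi,\ol{\Phi}\rangle=\langle\wt{\Psi},\ol{\Phi}\rangle$ for all $\ol{\Phi}\in\mathfrak{H}$; as $\mathfrak{M}$ is one-dimensional, $\wt{\Psi}=\ene\Psi$ for some scalar $\ene$, which is exactly the weak eigenrelation of (i). Conversely, that eigenrelation exhibits $\wt{\Psi}=\ene\Psi\in\mathfrak{M}$, establishing invariance. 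On the other side, statement (ii) of the corollary is the weak Bloch equation \cref{bloch} restricted to the single spanning vector $\Phi_0$ of $\mathfrak{N}$; since \cref{bloch} is linear in $\Phi\in\mathfrak{N}$ and $\mathfrak{N}=\Span\{\Phi_0\}$, testing at $\Phi_0$ is equivalent to testing at all of $\mathfrak{N}$. The equivalence (i)$\Leftrightarrow$(ii) of the corollary then follows from the equivalence (i)$\Leftrightarrow$(ii) of \cref{blochthm}.

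For the energy formula I would first establish $\Xi\Phi_0=\Psi$. Decomposing $\Phi_0=c\Psi+w$ with $w\in\mathfrak{N}^\perp$ according to \cref{compcond} and pairing with $\Phi_0$ gives $\|\Phi_0\|^2=c\langle\Psi,\Phi_0\rangle=c$, so under the standing normalization $\|\Phi_0\|=1$ one has $c=1$ and $\Xi\Phi_0=c\Psi=\Psi$; this also matches the interpretation of $\Xi$ as the wave operator carrying the reference $\Phi_0$ to the target $\Psi$. Testing the eigenrelation in (i) at $\ol{\Phi}=\Phi_0$ and using $\langle\Psi,\Phi_0\rangle=1$ then yields $\ene=\langle\ham\Psi,\Phi_0\rangle=\langle\ham\Xi\Phi_0,\Phi_0\rangle$; the same value is obtained by specializing the effective-Hamiltonian eigenvalue equation \cref{effeig} to $J=1$. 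The only real point requiring care---rather than genuine difficulty---is keeping track of the inner product: both $\mathfrak{N}^\perp$ and the oblique projector $\Xi$ must be understood with respect to the $\mathfrak{L}$-pairing of the Gelfand triple, and the normalization of $\Phi_0$ is what turns $\Xi\Phi_0=\|\Phi_0\|^2\Psi$ into $\Xi\Phi_0=\Psi$. Everything else is a direct transcription of \cref{blochthm}.
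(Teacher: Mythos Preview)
Your proposal is correct and follows exactly the route the paper intends: the corollary is stated without proof and is simply the specialization of \cref{blochthm} to $\dim\mathfrak{M}=\dim\mathfrak{N}=1$, which you carry out carefully. Your observation that the energy formula $\ene=\dua{\ham\Xi\Phi_0}{\Phi_0}$ tacitly requires $\|\Phi_0\|=1$ (otherwise $\Xi\Phi_0=\|\Phi_0\|^2\Psi$) is a good catch---this normalization is indeed assumed throughout the paper (see \cref{seccicc} and \cref{blochsrcc}) but is not restated in the corollary.
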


\subsection{The SRCC method}\label{srccdersec}

The single-reference Coupled-Cluster method easily follows from \cref{blochcoro} through the exponential parametrization of the wave operator.
In the following theorem, we re-establish \cite[Theorem 5.3]{rohwedder2013continuous} (see \cref{fccfci}).

\begin{theorem}\label{blochsrcc}
Let $\ham:\HC^1_K\to(\HC_K^1)^*$ be a bounded operator.
Fix $\Phi_0\in\HC^1_K$ with $\|\Phi_0\|=1$ and suppose that $\Psi\in\HC^1_K$ is such that  $\dua{\Psi}{\Phi_0}=1$.
Then the following are equivalent. 
\begin{enumerate}[label=\normalfont(\roman*)]
\item $\dua{\ham\Psi}{\Phi}=\ene \dua{\Psi}{\Phi}$ for all $\Phi\in\HC^{1}_K$ for some scalar $\ene$.
\item (Full CC) $\Psi=e^{T_*}\Phi_0$ for some $t_*\in\VV(G^\full)$ such that 
\begin{equation}\label{srccdereq}
\dua{e^{-T_*}\ham e^{T_*} \Phi_0}{S\Phi_0}=0\quad\text{for all}\quad s\in\VV(G^\full).
\end{equation}
Furthermore, $\ene=\dua{e^{-T_*}\ham e^{T_*} \Phi_0}{\Phi_0}$.
\item (Full CI) $\Psi=(I+C_*)\Phi_0$ for some $c_*\in\VV(G^\full)$ such that 
\begin{equation}\label{srcidereq}
\dua{\ham (I+C_*)\Phi_0}{S\Phi_0}=\ene_{\ci} \dua{(I+C_*)\Phi_0}{S\Phi_0}\quad\text{for all}\quad s\in\VV(G^\full),
\end{equation}
where $\ene_\ci=\dua{\ham (I+C_*)\Phi_0}{\Phi_0}$. Furthermore, $\ene=\ene_\ci$.
\end{enumerate}
\end{theorem}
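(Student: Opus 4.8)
The plan is to deduce everything from \cref{blochcoro} applied to the Gelfand triple obtained from $\mathfrak{H}=\HC^1_K$ (with $\mathfrak{L}$ the $\LC^2$-closure of $\HC^1_K$), taking the one-dimensional spaces $\mathfrak{N}=\Span\{\Phi_0\}$ and $\mathfrak{M}=\Span\{\Psi\}$. First I would verify the complementarity condition \cref{compcond}: since $\dua{\Psi}{\Phi_0}=1$, every $\Phi\in\HC^1_K$ splits as $\Phi=\dua{\Phi}{\Phi_0}\Psi+(\Phi-\dua{\Phi}{\Phi_0}\Psi)$ with the second summand in $\mathfrak{N}^\perp=\HC^{1,\perp}_K$, so $\mathfrak{M}\oplus\mathfrak{N}^\perp=\HC^1_K$ and the oblique projector $\Xi$ onto $\mathfrak{M}$ along $\mathfrak{N}^\perp$ exists. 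A short computation then yields the explicit formula $\Xi\Phi=\dua{\Phi}{\Phi_0}\Psi$, and in particular $\Xi\Phi_0=\Psi$. With this, \cref{blochcoro} immediately gives the equivalence of statement (i) with the weak Bloch equation $\dua{\ham\Psi}{(I-\Xi^\dag)\Phi'}=0$ for all $\Phi'\in\HC^{1,\perp}_K$, together with the energy formula $\ene=\dua{\ham\Xi\Phi_0}{\Phi_0}=\dua{\ham\Psi}{\Phi_0}$.

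Next I would install both parametrizations. Writing $\Psi=\Phi_0+\ul\Psi$ with $\ul\Psi\in\HC^{1,\perp}_K$, \cref{clusH1} produces a unique cluster operator $C_*\in\VF(G^\full)$ with $C_*\Phi_0=\ul\Psi$, so $\Psi=(I+C_*)\Phi_0$; this already supplies the parametrization of (iii). Feeding $C_*$ into \cref{explog} yields a unique $T_*\in\VF(G^\full)$ with $e^{T_*}=I+C_*$, hence $\Psi=e^{T_*}\Phi_0$, the parametrization of (ii). Because $C_*$ and $T_*$ exist and are unique, the ``for some $c_*$/$t_*$'' clauses are automatic, and it remains only to match the respective equations.

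The heart of the argument is the identification of the two test sets. On the one hand, $\Xi^\dag$ is a projector with $\Ran\Xi^\dag=\mathfrak{N}$ and $\Ker\Xi^\dag=\mathfrak{M}^\perp$, so $I-\Xi^\dag$ projects onto $\mathfrak{M}^\perp$ along $\mathfrak{N}$; being injective on $\mathfrak{N}^\perp$ and fixing $\mathfrak{M}^\perp$, it satisfies $(I-\Xi^\dag)\HC^{1,\perp}_K=\mathfrak{M}^\perp=\Span\{\Psi\}^\perp$ (decompose $\chi\in\mathfrak{M}^\perp$ along $\mathfrak{N}\oplus\mathfrak{N}^\perp$ and apply $I-\Xi^\dag$). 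Hence the Bloch equation is equivalent to $\dua{\ham\Psi}{\chi}=0$ for all $\chi\in\mathfrak{M}^\perp$. On the other hand, rewriting \cref{srccdereq} through the adjoint relation $(e^{-T_*})^\dag=e^{-T_*^\dag}$ (\cref{deexadj}) gives $\dua{\ham\Psi}{e^{-T_*^\dag}S\Phi_0}=0$ for all $s\in\VV(G^\full)$, so its test set is $e^{-T_*^\dag}\VC(G^\full)$. Using $e^{-T_*}\Psi=\Phi_0$ one checks $e^{-T_*^\dag}\VC(G^\full)\subset\mathfrak{M}^\perp$, and since $G^\full$ is excitation complete, \cref{exexp} guarantees that $e^{\pm T_*^\dag}$ restricts to a bijection of $\VC_0=\Span\{\Phi_0\}\oplus\VC(G^\full)=\HC^1_K$ fixing $\Phi_0$; decomposing any $\chi\in\mathfrak{M}^\perp$ as $c\Phi_0+e^{-T_*^\dag}\eta$ and using $\dua{\chi}{\Psi}=0$ to force $c=0$ then upgrades this to the equality $e^{-T_*^\dag}\VC(G^\full)=\mathfrak{M}^\perp$. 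With both test sets equal to $\mathfrak{M}^\perp$, the Bloch and Full CC equations coincide, proving (i)$\Leftrightarrow$(ii); the energy identity $\ene=\dua{e^{-T_*}\ham e^{T_*}\Phi_0}{\Phi_0}$ then follows from $\ene=\dua{\ham\Psi}{\Phi_0}$ and $e^{-T_*^\dag}\Phi_0=\Phi_0$ (\cref{deexthm}).

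Finally, the equivalence (i)$\Leftrightarrow$(iii) is the most direct part: as $s$ ranges over $\VV(G^\full)$, $S\Phi_0$ ranges over all of $\HC^{1,\perp}_K$, so \cref{srcidereq} says exactly $\dua{\ham\Psi-\ene_\ci\Psi}{\chi}=0$ for all $\chi\in\HC^{1,\perp}_K$, while the definition of $\ene_\ci$ together with $\dua{\Psi}{\Phi_0}=1$ disposes of the remaining $\Phi_0$-component; combining these over $\HC^1_K=\Span\{\Phi_0\}\oplus\HC^{1,\perp}_K$ recovers (i) with $\ene=\ene_\ci$, and conversely. I expect the main obstacle to be the identification $e^{-T_*^\dag}\VC(G^\full)=\mathfrak{M}^\perp$ in the infinite-dimensional case $K=\infty$, where the naive dimension count is unavailable and one must lean on the bijectivity in \cref{exexp} together with the Gelfand-triple bookkeeping for the pairing $\dua{\ham\Psi}{\cdot}$.
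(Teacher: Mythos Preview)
Your proposal is correct and follows essentially the same route as the paper: both apply \cref{blochcoro} with $\mathfrak{N}=\Span\{\Phi_0\}$, $\mathfrak{M}=\Span\{\Psi\}$, and then use \cref{exexp} to pass between the Bloch test set and $\{S\Phi_0:s\in\VV(G^\full)\}$. The only organizational difference is that the paper parametrizes the wave operator from the outset as $\Xi=e^{T_*}\Pi_{\Phi_0}$ (resp.\ $\Xi=(I+C_*)\Pi_{\Phi_0}$) and performs the change of variables $S'\Phi_0\mapsto e^{-T_*^\dag}S\Phi_0$ directly, whereas you first derive the abstract Bloch equation and then identify both test sets with $\mathfrak{M}^\perp$; for (iii) you bypass \cref{blochcoro} altogether in favor of a direct verification, which is equally valid.
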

\begin{proof}
Let $\mathfrak{H}=\HC^1_K$ and $\mathfrak{L}=\LC^2$. First, we prove (i)$\Longleftrightarrow$(ii).
We apply \cref{blochcoro} with the SRCC wave operator 
$$
\Xi=e^{T_*}\Pi_{\Phi_0},
$$
where $T_*$ is some cluster operator and $\Pi_{\Phi_0}$ is the orthogonal projector onto $\mathfrak{N}=\Span\{\Phi_0\}$. Note that $\mathfrak{N}^\perp=\VC(G^\full)$.  It is easy to see that $\Xi$ is idempotent, and that $\ker\Xi=\mathfrak{N}^\perp$.
By an appropriate choice of $T_*$, $\Ran\Xi=\mathfrak{M}$ using $\dua{\Psi}{\Phi_0}=1$ and \cref{explog}.
Furthermore, $\Span\{e^{T_*}\Phi_0\}=\Ran\Xi\subset\mathfrak{H}$ due to \cref{clusH1}. 
Applying \cref{blochcoro}, (i) holds if and only if $\Psi=e^{T_*}\Phi_0$ and $T_*$ satisfies the weak Bloch equation
$$
\dua{\ham e^{T_*} \Phi_0}{(I-\Pi_{\Phi_0}e^{T_*^\dag})S'\Phi_0}=0\quad\text{for all}\quad s'\in\VV(G^\full) .
$$
Recalling \cref{exexp} (ii), and using the change of variables $S'=e^{-T_*^\dag}S$,
$$
\dua{e^{-T_*}\ham e^{T_*} \Phi_0}{S\Phi_0}=0\quad\text{for all}\quad s\in\VV(G^\full).
$$
Here we used that $e^{-T_*}$ can be extended to a bounded $\HC^{-1}\to\HC^{-1}$ operator (\cref{clusH1}).\footnote{We refer the reader to the proof of \cite[Theorem 5.3]{rohwedder2013continuous} for more details.}
Note that $\ham^\eff$ is now a one-dimensional linear map (i.e. a multiplication by a scalar), so 
$\sigma(\ham^\eff)=\dua{e^{-T_*}\ham e^{T_*} \Phi_0}{\Phi_0}=\ene$.

Next, we prove (i)$\Longleftrightarrow$(iii). We now apply \cref{blochcoro} with the SRCI wave operator 
$$
\Xi=(I+C_*)\Pi_{\Phi_0},
$$
where $C_*$ is some cluster operator and the claim follows from a straightforward calculation. Further, 
now $\sigma(\ham^\eff)=\dua{\ham(I+C_*)\Phi_0}{\Phi_0}=\ene$.
\end{proof}

\subsection{The Jeziorski--Monkhorst MRCC method}\label{jmder}

In MRCC methods the ``model space'' $\mathfrak{N}$ is chosen to be the space spanned by $M$ orthonormal reference determinants,
$$
\mathfrak{N}=\Span\{ \Phi_{0_m} : m=1,\ldots,M\}.
$$ 
The \emph{Jeziorski--Monkhorst method} \cite{jeziorski1981coupled} uses the following Ansatz for the wave operator:
\begin{equation}\label{jmwave}
\Xi=\sum_{m=1}^M e^{T^{(m)}} \Pi_{\Phi_{0_m}},
\end{equation}
which corresponds to \cref{jmansatz}.

\begin{theorem}
Let $\mathfrak{N}$ be defined as above and set $\mathfrak{M}=\Span\{\Psi_m : m=1,\ldots,M\}$, where $\{\Psi_m\}_{m=1}^M\subset\HC^1_K$ is $\LC^2$-orthogonal.
Suppose that for every $m=1,\ldots,M$, $\dua{\Psi_m}{\Phi_{0_n}}\neq 0$ for at least one $n=1,\ldots,M$. Then, the following are equivalent.
\begin{enumerate}[label=\normalfont(\roman*)]
\item $\mathfrak{M}$ is weakly $\ham$-invariant: for every $\Psi_m$ ($m=1,\ldots,M$) there exists $\wt{\Psi}_m\in\mathfrak{M}$ such that
$\dua{\ham\Psi_m}{\Phi'}=\dua{\wt{\Psi}_m}{\Phi'}$ for all $\Phi'\in\mathfrak{H}^1_K$.
\item (Full JM-MRCC) $\mathfrak{M}=\Span\{ e^{T_*^{(m)}} \Phi_{0_m} : m=1,\ldots,M\}$, where $t_*^{(m)}\in \VV(G_m^\full)$ satisfies 
\begin{equation}\label{jmmrcc}
\dua{e^{-T_*^{(m)}}\ham e^{T_*^{(m)}} \Phi_{0_{m}}}{S^{(m)}\Phi_{0_m}}=\sum_{n=1}^M \ham^\eff_{mn}\dua{e^{-T_*^{(m)}} e^{T_*^{(n)}}\Phi_{0_n} }{S^{(m)}\Phi_{0_m}},
\end{equation}
for all $s^{(m)}\in \VV(G_m^\full)$ and $m=1,\ldots,M$, where the matrix elements of the effective Hamiltonian are given by
$\ham_{mn}^\eff=\dua{ e^{-T_*^{(m)}}\ham e^{T_*^{(m)}}\Phi_{0_{m}} }{\Phi_{0_n}}$. 
\item (Full MRCI) $\mathfrak{M}=\Span\{ (I+C_*^{(m)}) \Phi_{0_m} : m=1,\ldots,M\}$, where $c_*^{(m)}\in \VV(G_m^\full)$ satisfies 
\begin{equation}\label{jmmrci}
\dua{\ham (I+C_*^{(m)}) \Phi_{0_{m}}}{S^{(m)}\Phi_{0_m}}=\sum_{n=1}^M \wh{\ham}^\eff_{mn}\dua{(I+C_*^{(n)})\Phi_{0_n} }{S^{(m)}\Phi_{0_m}},
\end{equation}
for all $s^{(m)}\in \VV(G_m^\full)$ and $m=1,\ldots,M$, where the matrix elements of the effective Hamiltonian are given by
$\wh{\ham}_{mn}^\eff=\dua{ \ham (I+C_*^{(m)}) \Phi_{0_{m}} }{\Phi_{0_n}}$.
\end{enumerate}
Furthermore, suppose that $\dua{\ham\Psi_m}{\ol{\Phi}}=\ene_m\dua{\Psi_m}{\ol{\Phi}}$ for all $\ol{\Phi}\in\HC^1_K$ and $m=1,\ldots,M$.
Then the following hold true.
\begin{enumerate}[label=\normalfont(\alph*)]
\item Suppose $\mathfrak{M}$ is given as in (ii). Then the coefficients $a^{(m)}_j$ in the expansion
$\Psi_j=\sum_{n=1}^M a^{(n)}_j e^{T_*^{(n)}}\Phi_{0_n}$ are given as the solution to the eigenvalue problem
$$
\sum_{n=1}^M \ham^\eff_{nm} a_j^{(n)}=\ene_j a_j^{(m)}\quad\text{where}\quad m=1,\ldots,M.
$$
\item Suppose $\mathfrak{M}$ is given as in (iii). Then the coefficients $\wh{a}^{(m)}_j$ in the expansion
$\Psi_j=\sum_{n=1}^M \wh{a}^{(n)}_j (I+C_*^{(n)})\Phi_{0_n}$ are given as the solution to the eigenvalue problem
$$
\sum_{n=1}^M \wh{\ham}^\eff_{nm} \wh{a}_j^{(n)}=\ene_j \wh{a}_j^{(m)}\quad\text{where}\quad m=1,\ldots,M.
$$
\end{enumerate} 
\end{theorem}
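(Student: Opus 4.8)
The plan is to deduce all three equivalences from the abstract Bloch theorem \cref{blochthm}, exactly as the single-reference case was obtained from \cref{blochcoro} in \cref{blochsrcc}, the only new ingredient being that $\dim\mathfrak{N}=\dim\mathfrak{M}=M$ is now arbitrary. I would set $\mathfrak{H}=\HC^1_K$, $\mathfrak{L}=\LC^2$, and $\mathfrak{N}=\Span\{\Phi_{0_m}\}_{m=1}^M$, so that $\mathfrak{N}^\perp=\VC(G_m^\full)$ is the \emph{same} subspace for every $m$ (since $\ol{L}_m=\ol{S}$ for all $m$). The whole problem then reduces to two tasks: first, showing that the Jeziorski--Monkhorst wave operator \cref{jmwave} is precisely the oblique projector $\Xi$ onto $\mathfrak{M}$ along $\mathfrak{N}^\perp$ demanded by \cref{blochthm}; second, rewriting the weak Bloch equation \cref{bloch} in the coupled form \cref{jmmrcc} (resp. \cref{jmmrci}).

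For the first task I would verify that $\Xi=\sum_m e^{T^{(m)}}\Pi_{\Phi_{0_m}}$ is idempotent with $\ker\Xi=\mathfrak{N}^\perp$. The key structural fact is that reference states cannot be reached by excitations from a different reference: since $0_n\notin L_m$ for $n\neq m$, one has $T^{(m)}\Phi_{0_n}=0$ and, dually, $(T^{(m)})^\dag\Phi_{0_\ell}=0$ for all $\ell$ by \cref{deexthm}(iii). Together with the orthonormality of the references this yields $\dua{e^{T^{(n)}}\Phi_{0_n}}{\Phi_{0_m}}=\delta_{nm}$, hence $\Pi_{\Phi_{0_m}}e^{T^{(n)}}\Pi_{\Phi_{0_n}}=\delta_{mn}\Pi_{\Phi_{0_n}}$ and $\Xi^2=\Xi$. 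Conversely, given a weakly $\ham$-invariant $\mathfrak{M}$ satisfying the complementarity condition \cref{compcond}, I would construct the amplitudes: writing $\xi_m:=\Xi\Phi_{0_m}\in\mathfrak{M}$ for the canonical oblique projector, the relation $(I-\Xi)\Phi_{0_m}\in\mathfrak{N}^\perp$ forces $\Pi\xi_m=\Phi_{0_m}$, so $\xi_m-\Phi_{0_m}\in\mathfrak{N}^\perp=\VC(G_m^\full)$; then \cref{clusH1} and \cref{explog} produce a unique $T_*^{(m)}\in\VV(G_m^\full)$ with $\xi_m=e^{T_*^{(m)}}\Phi_{0_m}$, and $\Xi=\sum_m e^{T_*^{(m)}}\Pi_{\Phi_{0_m}}$ follows by expanding on $\mathfrak{N}$.

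For the second task I would expand the weak invariance (i): since $e^{T^{(m)}}\Phi_{0_m}\in\mathfrak{M}$, there are scalars $\ham^\eff_{mn}$ with
$$
\dua{\ham e^{T^{(m)}}\Phi_{0_m}}{\Phi'}=\sum_{n=1}^M \ham^\eff_{mn}\dua{e^{T^{(n)}}\Phi_{0_n}}{\Phi'}\quad\text{for all}\quad \Phi'\in\HC^1_K .
$$
Testing against $\Phi'=\Phi_{0_n}$ gives $\ham^\eff_{mn}=\dua{\ham e^{T^{(m)}}\Phi_{0_m}}{\Phi_{0_n}}$, which equals the stated $\dua{e^{-T^{(m)}}\ham e^{T^{(m)}}\Phi_{0_m}}{\Phi_{0_n}}$ because $(T^{(m)})^\dag\Phi_{0_n}=0$. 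Testing against $\Phi'=e^{-(T^{(m)})^\dag}S^{(m)}\Phi_{0_m}$ and moving $e^{-(T^{(m)})^\dag}$ across the pairing converts the left side into $\dua{e^{-T^{(m)}}\ham e^{T^{(m)}}\Phi_{0_m}}{S^{(m)}\Phi_{0_m}}$ and the right side into $\sum_n\ham^\eff_{mn}\dua{e^{-T^{(m)}}e^{T^{(n)}}\Phi_{0_n}}{S^{(m)}\Phi_{0_m}}$, i.e. \cref{jmmrcc}; the $n=m$ summand drops out since $S^{(m)}\Phi_{0_m}\perp\Phi_{0_m}$. I would justify that $\{\Phi_{0_n}\}_n$ together with $\{e^{-(T^{(m)})^\dag}S^{(m)}\Phi_{0_m}\}$ spans $\HC^1_K$ using \cref{exexp}(i) and the bijectivity of $e^{-(T^{(m)})^\dag}$ on $\VC_0$, so that the two test families exhaust the weak-invariance identity. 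The equivalence (i)$\Leftrightarrow$(iii) is the same computation with $e^{T^{(m)}}$ replaced by $I+C^{(m)}$ and no similarity transformation, giving \cref{jmmrci} directly.

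Finally, under the eigenvalue hypothesis $\dua{\ham\Psi_j}{\ol\Phi}=\ene_j\dua{\Psi_j}{\ol\Phi}$, parts (a) and (b) follow from the ``furthermore'' part of \cref{blochthm}: the effective eigenproblem \cref{effeig} reads $\dua{\ham^\eff\Pi\Psi_j}{\Phi_{0_m}}=\ene_j\dua{\Pi\Psi_j}{\Phi_{0_m}}$, and since $\Pi e^{T^{(n)}}\Phi_{0_n}=\Phi_{0_n}$ one has $\Pi\Psi_j=\sum_n a_j^{(n)}\Phi_{0_n}$ with $a_j^{(n)}=\dua{\Psi_j}{\Phi_{0_n}}$; inserting $\dua{\ham^\eff\Phi_{0_n}}{\Phi_{0_m}}=\ham^\eff_{nm}$ yields $\sum_n\ham^\eff_{nm}a_j^{(n)}=\ene_j a_j^{(m)}$. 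I expect the main obstacle to be the first task---pinning down that the abstract projector is \emph{exactly} the decoupled JM ansatz, since this is where the reference-to-reference vanishing (\cref{deexthm}(iii)) and the verification of the complementarity hypothesis \cref{compcond} (where the overlap assumption on the $\Psi_m$ must be upgraded to invertibility of the matrix $\dua{\Psi_m}{\Phi_{0_n}}$) are genuinely used; the reduction of the Bloch equation itself is then a routine adaptation of the single-reference computation.
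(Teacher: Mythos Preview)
Your proposal is correct and follows essentially the same route as the paper: apply \cref{blochthm} with the Jeziorski--Monkhorst wave operator \cref{jmwave}, verify it is the oblique projector using $(T^{(m)})^\dag\Phi_{0_\ell}=0$ from \cref{deexthm}(iii), then rewrite the Bloch identity via the change of test function $S^{(m)}\mapsto (e^{-T_*^{(m)}})^\dag S^{(m)}$ justified by \cref{exexp}. The only cosmetic difference is that the paper expands the Bloch equation \cref{bloch} on the $\Xi^\dag$ side, whereas you expand weak invariance (i) directly with undetermined coefficients $\ham^\eff_{mn}$; both computations are equivalent, and your added remarks on the converse construction of $T_*^{(m)}$ and on the complementarity hypothesis are details the paper leaves implicit.
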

\begin{proof}
Let $\mathfrak{H}=\HC^1_K$. First, we prove (i)$\Longleftrightarrow$(ii) by applying \cref{blochsrcc}.
Clearly, for the JM wave operator \cref{jmwave} we have $\Xi^2=\Xi$ and $\ker\Xi=\mathfrak{N}^\perp$ and
$$\Ran\Xi=\Span\{ e^{T^{(m)}} \Phi_{0_m} : m=1,\ldots,M\} .$$
The weak Bloch equation \cref{bloch} is equivalent to 
\begin{align*}
\dua{ \ham e^{T_*^{(m)}}\Phi_{0_{m}} }{\Phi'}=\sum_{n=1}^M \dua{ \ham e^{T_*^{(m)}} \Phi_{0_{m}} }{ \Pi_{\Phi_{0_n}} e^{(T_*^{(n)})^\dag} \Phi' }
\end{align*}
for all $\Phi'\in \mathfrak{N}^\perp$ and $m=1,\ldots,M$. Setting $\Phi'=S^{(m)}\Phi_{0_m}$, 
we obtain 
%this can be written as 
\begin{align*}
\dua{ \ham e^{T_*^{(m)}}\Phi_{0_{m}} }{S^{(m)}\Phi_{0_m}}&=\sum_{n=1}^M \dua{ \ham e^{T_*^{(m)}} \Phi_{0_{m}} }{ \Pi_{\Phi_{0_n}} e^{(T_*^{(n)})^\dag} S^{(m)}\Phi_{0_m}}\\
&=\sum_{n=1}^M \dua{ \ham e^{T_*^{(m)}} \Phi_{0_{m}} }{\Phi_{0_n}}\dua{e^{(T_*^{(n)})^\dag} S^{(m)}\Phi_{0_m}}{\Phi_{0_n}}\\
&=\sum_{n=1}^M \dua{ e^{-T_*^{(m)}}\ham e^{T_*^{(m)}} \Phi_{0_{m}} }{\Phi_{0_n}}\dua{e^{T_*^{(n)}}\Phi_{0_n}}{S^{(m)}\Phi_{0_m}}
\end{align*}
for all $s^{(m)}\in\VV(G^\full_m)$. Here, we used that $(T^{(m)})^\dag\Phi_{0_n}=0$, see \cref{deexthm} (iii).
The proof of \cref{jmmrcc} is finished by invoking \cref{exexp} (ii) and replacing $S^{(m)}$ by  $(e^{-T_*^{(m)}})^\dag S^{(m)}$.

Next, we prove (i)$\Longleftrightarrow$(iii). The MRCI wave operator reads
$$
\Xi=\sum_{m=1}^M (I+C_*^{(m)}) \Pi_{\Phi_{0_m}}.
$$
With this choice \cref{bloch} is equivalent to 
\begin{align*}
\dua{ \ham (I+C_*^{(m)})\Phi_{0_{m}} }{\Phi'}=\sum_{n=1}^M \dua{ \ham (I+C_*^{(m)}) \Phi_{0_{m}} }{ \Pi_{\Phi_{0_n}} (I+C_*^{(n)})^\dag \Phi' }
\end{align*}
for all $\Phi'\in \mathfrak{N}^\perp$ and $m=1,\ldots,M$.  Setting $\Phi'=S^{(m)}\Phi_{0_m}$, this can be written as 
\begin{align*}
\dua{ \ham (I+C_*^{(m)})\Phi_{0_{m}} }{S^{(m)}\Phi_{0_m}}&=\sum_{n=1}^M \dua{ \ham (I+C_*^{(m)}) \Phi_{0_{m}} }{ \Pi_{\Phi_{0_n}} (I+C_*^{(n)})^\dag S^{(m)}\Phi_{0_m} }\\
&=\sum_{n=1}^M \dua{ \ham (I+C_*^{(m)}) \Phi_{0_{m}} }{\Phi_{0_n}} \dua{(I+C_*^{(n)})^\dag S^{(m)}\Phi_{0_m} }{\Phi_{0_n}}\\
&=\sum_{n=1}^M \dua{ \ham (I+C_*^{(m)}) \Phi_{0_{m}} }{\Phi_{0_n}} \dua{(I+C_*^{(n)})\Phi_{0_n}}{S^{(m)}\Phi_{0_m}},
\end{align*}
which is what we wanted to prove.

For the ``furthermore'' part of (a), expanding $\Psi_j$ as 
$\Psi_j=\sum_{n=1}^M a^{(n)}_j e^{T_*^{(n)}}\Phi_{0_n}$,
for some scalars $a^{(n)}_j$, we find that $a^{(m)}_j=\dua{\Psi_j}{\Phi_{0_m}}$.
It is easy to see that \cref{effeig} now reads
$$
\sum_{n=1}^M \dua{\ham e^{T_*^{(n)}} \Phi_{0_n}}{\Phi_{0_m}} a_j^{(n)}=\ene_j a_j^{(m)}
$$
for all $j=1,\ldots,M$. The proof of the ``furthermore'' part of (b) is similar.
\end{proof}

%%%%%%%%%%%%%%%%%%%%%%%%%%%%%%%%%%%%%%%%%%%%%%%%%%%%%%%%%%%%%%%%%%%%%%%%%%%%%%%%%%%%%%%%%%%%%%%%%%%
%% CONCLUSIONS
%%%%%%%%%%%%%%%%%%%%%%%%%%%%%%%%%%%%%%%%%%%%%%%%%%%%%%%%%%%%%%%%%%%%%%%%%%%%%%%%%%%%%%%%%%%%%%%%%%%

\section{Conclusions and further work}

In this first part of a series of two articles, we proposed a framework to describe the discretization scheme involved in CC-like methods.
At the core of the description is the concept of the excitation graph (\cref{exgraphdef}), which completely determines all necessary building blocks such as 
excitation operators (\cref{exop}), cluster operators (\cref{clustop}) and cluster amplitude spaces (\cref{secamp}). 
The excitation graph concept admits a straightforward extension to the multireference case (\cref{exmultidef}).
Another advantage of our approach is that it avoids the use of second-quantized formalism and hence allowed us to prove the basic results 
(such as \cref{exopthm} and \cref{deexthm}) in a more transparent manner. Besides these, 
we also pointed out a number of structural properties of the excitation graph in \cref{exgrsec}.
 It is important to note that some of these graph-theoretic properties are reflected in the algebraic structure of the excitation operators 
 (\cref{consex} and \cref{subalgebratrans}).
Some relevant combinatorial quantities have been calculated in \cref{appgraph}. Furthermore, we proposed an algorithm to determine the reference states 
in an optimal fashion for the multirefence case in \cref{optimalmulti}. 

In \cref{ccdersec}, we provided unified and rigorous derivations of both the single-reference- (\cref{srccdersec}), and a multireference (\cref{jmder}) CC method. 
The derivations used a general theorem (\cref{blochthm}) motivated by a known method based on perturbation theory.

%%%%%%%%%%%%%%%%%%%%%%%%%%%%%%%%%%%%%%%%%%%%%%%%%%%%%%%%%%%%%%%%%%%%%%%%%%%%%%%%%%%%%%%%%%%%%%%%%%%
%% APPENDICES
%%%%%%%%%%%%%%%%%%%%%%%%%%%%%%%%%%%%%%%%%%%%%%%%%%%%%%%%%%%%%%%%%%%%%%%%%%%%%%%%%%%%%%%%%%%%%%%%%%%

\appendix

\section{Properties of the excitation graph}\label{appgraph}

Here, we restrict ourselves to the single-reference case ($M=1$) and drop the subscript $m$'s from the notation.
Recall that $K$ denotes the cardinality of the orbital set $\Lambda$.
Given $\gamma\in L$, we introduce the set of paths of length $n$ from $0$ to $\gamma$ in $G$,
$$
\PP^{n}(\gamma)=\{\mi{\alpha}\in L\times\ldots\times L : \text{there is a path $0\to\gamma$ in $G$ having edges $\mi{\alpha}$} \}.
$$
The following theorem sheds light on the combinatorial structure of the excitation graph.
\begin{theorem}\label{graphthm}
Let $G^\full=(L,E^\full)$ be the full SR excitation graph with $K$ orbitals and $2N\le K$ particles.
Then the following properties hold true.
\begin{enumerate}[label=(\roman*)]
\item The number of vertices in $G$ is given by $|L|={K \choose N}$.
\item The number of vertices of rank $r$ is $|L(r)|={N \choose r }{K-N \choose r}$.
\item There are no edges in $E^\full$ entirely inside $L(r)$, and the number of edges from $L(r)$ to $L(r+s)$ is given by
$$
|E(r,r+s)|={K-N \choose r}{ K-N-r \choose s }{N \choose s+r}{s+r \choose r},
$$
for all $r=0,1,\ldots,N$ and $s=0,\ldots,N-r$, and $|E(r,r+s)|=0$ if $s=N-r+1,\ldots,N$. Furthermore, the symmetry property
$|E(r,r+s)|=|E(s,r+s)|$ holds true.
\item The total number of edges is given by
$$
|E^\full|=\sum_{r=1}^{N} {N \choose r }{K-N \choose r}{ K-2r \choose N-r }.
$$
\item The number of directed paths of length $n\le r=\rk(\gamma)$ from $0$ to $\gamma$ is given by $|\PP^{n}(\gamma)|=p(r,n)$, where
\begin{equation}\label{numpaths}
p(r,n)=\sum_{\substack{r_1+\ldots+r_n=r\\r_1,\ldots,r_n\ge 1}} \left(\frac{r!}{r_1!\cdots r_n!}\right)^2.
\end{equation}
\end{enumerate}
\end{theorem}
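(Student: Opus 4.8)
The plan is to handle the five items in turn, in each case parametrizing edges (and later paths) by the unique excitation label supplied by \cref{abinvgraph} and \cref{noparallel}, and then reducing the count to an elementary identity. Items (i) and (ii) are immediate: $L=S$ is the collection of all $N$-element subsets of $\Lambda$, so $|L|=\binom{K}{N}$; and a rank-$r$ vertex is obtained by independently picking its occupied part $\occ{}{\alpha}\subseteq 0$ of size $N-r$ and its virtual part $\virt{}{\alpha}\subseteq 0^c$ of size $r$, whence $|L(r)|=\binom{N}{N-r}\binom{K-N}{r}=\binom{N}{r}\binom{K-N}{r}$.

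For (iii) I would exhibit a bijection between edges from $L(r)$ to $L(r+s)$ and a fourfold choice of data. By \cref{abinvgraph} and \cref{noparallel}, each edge $(\beta,\gamma)\in E^\full$ is uniquely coded by the pair $(\beta,\alpha)$ with $\gamma=\alpha\vee\beta$, $(\alpha,\beta)\in\mathcal{L}$ and $\alpha\neq 0$, and for such an edge $\rk(\beta)=r$, $\rk(\alpha)=s$. Recalling $\occ{}{\gamma}=\occ{}{\alpha}\cap\occ{}{\beta}$ and the defining conditions $\virt{}{\alpha}\cap\virt{}{\beta}=\emptyset$, $\occ{}{\alpha}\cup\occ{}{\beta}=0$, the pair is reconstructed from: the $r$ orbitals $\virt{}{\beta}$ chosen among the $K-N$ virtuals; the $s$ orbitals $\virt{}{\alpha}$ chosen among the remaining $K-N-r$; the $(r+s)$-element set $0\setminus\occ{}{\gamma}=(0\setminus\occ{}{\alpha})\,\dot{\cup}\,(0\setminus\occ{}{\beta})$ of vacated occupied orbitals chosen among the $N$; and finally its split into the size-$r$ block $0\setminus\occ{}{\beta}$ and the size-$s$ block $0\setminus\occ{}{\alpha}$. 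The product of the four factors is exactly the asserted formula. The degenerate case $s=0$ forces $\alpha=0$, i.e.\ a loop, which is excluded by definition---this simultaneously establishes that no edge stays inside a single rank level. The symmetry $|E(r,r+s)|=|E(s,r+s)|$ falls out because $\binom{K-N}{r}\binom{K-N-r}{s}$ and $\binom{s+r}{r}$ are symmetric in $r,s$, mirroring the symmetry of $\mathcal{L}$ under $\alpha\leftrightarrow\beta$.

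For (iv) I would instead bin edges by the rank $r=\rk(\alpha)$ of the excitation label (so $r\ge 1$). Fixing $\alpha$ of rank $r$ and writing $\rk(\beta)=j$, a compatible $\beta$ is obtained by choosing $\virt{}{\beta}$ as any $j$-subset of the $K-N-r$ virtuals disjoint from $\virt{}{\alpha}$, and $\occ{}{\beta}$ by adjoining to the forced set $0\setminus\occ{}{\alpha}$ all but $j$ of the $N-r$ orbitals of $\occ{}{\alpha}$; this gives $\sum_j\binom{K-N-r}{j}\binom{N-r}{j}$ compatible states. The Chu--Vandermonde identity $\sum_j\binom{m}{j}\binom{n}{j}=\binom{m+n}{m}$ collapses this to $\binom{K-2r}{N-r}$. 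Multiplying by $|L(r)|=\binom{N}{r}\binom{K-N}{r}$ and summing over $r=1,\dots,N$ yields the total; the hypothesis $2N\le K$ guarantees the binomials are meaningful.

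For (v) the decisive observation is that a directed path $0=v_0\to v_1\to\cdots\to v_n=\gamma$ is faithfully recorded by the orbitals it vacates and creates at each step. Since $\occ{}{v_i}=\occ{}{\alpha_i}\cap\occ{}{v_{i-1}}$ and $\virt{}{v_i}=\virt{}{\alpha_i}\cup\virt{}{v_{i-1}}$ with $\rk(\alpha_i)=r_i\ge 1$ and $r_1+\cdots+r_n=\rk(\gamma)=r$, the occupied parts form a decreasing chain whose successive differences $0\setminus\occ{}{\alpha_i}$ partition $0\setminus\occ{}{\gamma}$ into ordered blocks of sizes $r_1,\dots,r_n$, while the virtual parts form an increasing chain whose differences $\virt{}{\alpha_i}$ partition $\virt{}{\gamma}$ into ordered blocks of the same sizes (cf.\ \cref{2qex}). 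For a fixed composition $(r_1,\dots,r_n)$ each partition is an ordered set partition counted by the multinomial $r!/(r_1!\cdots r_n!)$, the two are chosen independently, and every such choice reconstructs a valid path; this gives $\big(r!/(r_1!\cdots r_n!)\big)^2$ paths, and summing over compositions produces $p(r,n)$. The main obstacle is precisely this last identification: recognizing that a path amounts to a \emph{pair} of independent ordered set partitions---one of the removed occupied orbitals and one of the added virtuals---which is what yields the square of the multinomial, and checking that any such pair indeed reassembles into a path on which no intermediate excitation operator vanishes. Parts (i)--(iv) are then routine bijective counts, with (iv) additionally leaning on Chu--Vandermonde.
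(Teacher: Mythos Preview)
Your proposal is correct and follows essentially the same approach as the paper: each part is a direct bijective count using the unique edge label furnished by \cref{abinvgraph}, and Vandermonde's identity handles the summation in (iv). The only cosmetic differences are that in (iii) you go straight to the symmetric form of the count whereas the paper first obtains $\binom{N}{r}\binom{K-N}{r}\binom{N-r}{N-s-r}\binom{K-N-r}{s}$ and then rewrites it, and in (iv) you bin edges by the rank of the excitation label $\alpha$ and sum over all compatible $\beta$, which lets you apply Vandermonde once, while the paper sums the (iii) formula over $s$ (picking up a $-1$ from the excluded $s=0$ term) and then needs a second application of Vandermonde to cancel that correction against the $r=0$ term.
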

\begin{proof}
(i) is trivial, so is (ii). As for (iii),
we enumerate the pairs $(\alpha,\beta)$ in $E^\full$ as follows. Fix $\alpha$ with $\rk(\alpha)=r$, then
$\beta$ must satisfy $r+s\le N$, where $\rk(\beta)=s$, so that $|\ul{\alpha}\cup\ul{\beta}|=N$ is possible.
In $\ul{\beta}$, we must choose the missing internal letters from $\ul{\alpha}$ and there are $r$ of them. For the remaining $N-s-r$ elements, we may choose freely:
there are ${ N-r \choose N-s-r }$ possibilities to do this. Next, $\ol{\beta}$ must be disjoint from $\ol{\alpha}$, so there are $M-N-r$ letters
to choose from, giving ${ M-N-r \choose s }$ possibilities. Multiplying these independent choices by the number of ways $\alpha$ can be chosen for fixed $r$, we get
\begin{equation}\label{rsedge}
{N \choose r }{M-N \choose r} { N-r \choose N-s-r }{ M-N-r \choose s }
\end{equation}
for $s=1,\ldots,N-r$.
This can be rewritten using the formula ${n\choose h}{n-h \choose k}={n \choose k}{n-k \choose h}$ as
\[
{M-N \choose r}{ M-N-r \choose s }{N \choose s+r}{s+r \choose r}.
\]
Using the aforementioned formula for the first two factors, we also get the desired symmetry property.

Next, to derive (iv) we sum up \cref{rsedge},
$$
|E^\full|=\sum_{r=0}^{N}\sum_{s=1}^{N-r} {N \choose r }{M-N \choose r} { N-r \choose N-s-r }{ M-N-r \choose s }.
$$
Using Vandermonde's identity,
$$
\sum_{s=1}^{N-r}{ N-r \choose N-s-r }{ M-N-r \choose s }={ M-2r \choose N-r } - 1,
$$
we get
$$
|E^\full|=\sum_{r=1}^{N} {N \choose r }{M-N \choose r}{ M-2r \choose N-r },
$$
where we used Vandermonde's identity once more.

Next, we prove (v). We need to change $0$ into $\gamma$ in $n$ steps (edges). Suppose that the rank-increment of each step is $r_1,\ldots,r_n$, and are such that $r_1+\ldots+r_n=r$. 
In the $k$th step we replace letters $(\alpha_1,\ldots,\alpha_{r_k})$ with $(\beta_1,\ldots,\beta_{r_k})$. These choices can be done independently, so there are
$r!^2$ possibilities. However, the order of the $\alpha$'s and $\beta$'s is irrelevant in each step so we have to divide by $(r_1!\cdots r_n!)^2$. Summing over all $r_1,\ldots,r_n$
gives the stated formula.
\end{proof}

\begin{remark}\hspace*{\fill}
\begin{enumerate}[label=(\roman*)]
\item It follows that the vertex density per rank is hypergeometric,
\begin{equation}\label{vertdens}
\nu_r=\frac{{N \choose r} {K-N \choose M-N-r}}{{K \choose N}}, \quad\text{where}\quad r=0,1,\ldots,N.
\end{equation}
Therefore, its mean is $\frac{N}{K}(K-N)$ and its variance is $\frac{(K-N)^2 N^2}{(K-1)K^2}$.
\item The formula \cref{numpaths} implies that $|\PP^{n}(\gamma)|$ is independent of $N$ and $M$ and is constant for all $\gamma$ of fixed rank $r$.
\item If S truncation is in effect, we have $p_{\mathrm{S}}(r,n)=r!^2$ if $r=n$ and 0 otherwise. 
\item For the SD truncation, note that the number of $(r_1,\ldots,r_n)$ tuples with $r_j\in\{1,2\}$,
 $r_1+\ldots+r_n=r$ and $|\{j : r_j=2\}|=k$ is given by ${n \choose k}$ if $r=n+k$ and $0$ otherwise. Therefore,
$$
p_{\mathrm{SD}}(r,n)=\frac{r!^2}{4^{r-n}} { n \choose r-n }.
$$
\item According to the proof of \cite[Lemma 4.4.]{rohwedder2013continuous}, 
$$
|\{\beta\in L : \beta\preceq\alpha\}|=\sum_{s=1}^{r-1} {r\choose s} {r-1 \choose r-s},
$$
where $r=\rk(\alpha)$.
\end{enumerate}
\end{remark}

\section{Optimal choice of multireference determinants}\label{optimalmulti}

In this appendix, we describe an algorithm that can be used to automatically determine an optimal set of multireference determinants. 
Let $J\in\NN$ and let 
$$
\{\gamma_1,\ldots,\gamma_J\}\subset S
$$ 
be a fixed set of determinants. Also, fix an excitation rank truncation, e.g. S, SD, SDT, etc.
We want to select a \emph{minimal} set of reference elements $\Omega=\{0_1,\ldots,0_M\}$, so that each $\gamma_j$ is reachable through a \emph{direct} S, SD, SDT, etc. excitation
from $\Omega$, this is called ``first-order interaction space'' in MRCC theory.

Recall that each $\alpha\in 2^\Lambda$ can be represented as a binary characteristic vector $\bve{\alpha}\in \{0,1\}^K$ such that 
$$
\bve{\alpha}^t=\begin{cases}
1 & t\in \alpha\\
0 & t\not\in \alpha
\end{cases}
$$
The set $\{0,1\}^K$ endowed with the Hamming metric
$$
d_{\HA}(\bve{\alpha},\bve{\beta})=|\{t : \bve{\alpha}^t\neq \bve{\beta}^t, \; t=1,\ldots,K \}|
$$
is a complete metric space, called the Hamming space. The closed balls and the spheres in this space are denoted as $B_\HA(\bve{\alpha},R)$ and $S_\HA(\bve{\alpha},R)$.
Using this language, $S$ is simply $S_\HA(\bve{0},N)$, where $\bve{0}=(0,\ldots,0)$.\footnote{We warn the reader that the notation $\bve{0_m}$ for the vector representation of $0_m$ is slightly colliding with $\bve{0}$, the actual zero vector for the Hamming space.} Further, 
$$
\rk_m(\alpha)=\frac{1}{2}d_\HA(\bve{0_m},\bve{\alpha})
$$
for any $m=1,\ldots,M$.
Notice that $d_\HA(\bve{\alpha},\bve{\beta})\ge 2$ for distinct $\bve{\alpha},\bve{\beta}\in S_\HA(\bve{0},N)$.

This way, our optimization problem may be formulated as a covering problem in Hamming space. Let $\rho$ denote the excitation rank truncation, e.g.
$\rho=1,2,3,\ldots$ for S, SD, SDT, etc. Fix $J\in\NN$ and $\Gamma=\{\bve{\gamma}_1,\ldots,\bve{\gamma}_J\}\subset S_\HA(\bve{0},N)$. 
We need to find a minimal set of Hamming balls $\{B_\HA(\bve{0_m},2\rho) : m=1,\ldots,M\}$ with $\bve{0_m}\in S_\HA(\bve{0},N)$ such that 
$$
\Gamma\subset \bigcup_{m=1}^M B_\HA(\bve{0_m},2\rho)\cap S_\HA(\bve{0},N).
$$
Obviously, $\bve{0_m}\in \Gamma_{2\rho}$, where
$$
\Gamma_{2\rho}=\bigcup_{j=1}^J B_\HA(\bve{\gamma_j},2\rho)\cap S_\HA(\bve{0},N).
$$
In other words, it is sufficient to look for the $\bve{0_m}$'s in the much smaller set $\Gamma_{2\rho}$.
Let $n=|\Gamma_{2\rho}|$, and introduce some indexing in $\Gamma_{2\rho}$, say $\Gamma_{2\rho}=\{\bve{\alpha}_1,\ldots,\bve{\alpha}_n\}$. The geometric form of the covering problem may be rephrased as a binary integer linear
program (BILP) \cite{schrijver1998theory},
$$
\left.
\begin{aligned}
\sum_{\nu=1}^n \vec{c}_\nu\vec{x}_\nu&\to \min!\\
\sum_{\substack{\bve{\gamma}_j\in B_\HA(\bve{\alpha}_\nu,2\rho)\\1\le \nu\le n}}  \vec{x}_\nu&\ge 1,\quad j=1,\ldots,J\\
\vec{x}&\in \{0,1\}^n
\end{aligned}
\right\}
$$
where $\vec{c}\in\mathbb{Q}^n$ is a given rational cost vector.

\begin{remark}
If $\vec{c}_\nu=0$ for some $\nu$, then we will automatically have $\vec{x}_\nu=1$ in the solution,
even if  $B_\HA(\bve{\alpha}_\nu,2\rho)$ does not cover. On the other hand,
assigning a larger (resp. infinite) cost $\vec{c}_\nu$ will likely (resp. surely) end up $\vec{x}_\nu=0$ in the solution.
\end{remark}

The above problem is called a ``multidimensional knapsack problem'' in the optimization community, which seems to be extensively studied.
However, we just naively solve the BILP using general ILP methods available in \emph{Mathematica}.
In our experience, the BILP can be built up and solved in a small amount of time for practically relevant parameters $N$, $K$ and $J$, even on an older machine. 

The reason for the apparent efficiency might be that the number of variables $n$ in the the BILP above is significantly less then $|S|={K\choose N}$. In fact,
using the binary entropy function $H(x)=-x\log_2x-(1-x)\log_2(1-x)$, we have the rough estimate
$$
\frac{n}{|S|}\le \frac{J|B_\HA(\bve{0},2\rho)|}{|B_\HA(\bve{0},N)|}\le J \sqrt{8K\lambda'(1-\lambda')} 2^{-K (H(\lambda')-H(\lambda))},
$$
where $\lambda=2\rho/K$ and $\lambda'=N/K$ valid for $0<\lambda,\lambda'<\frac{1}{2}$ \cite[Lemma 2.4.4]{cohen1997covering}. Notice that $H(\lambda')\ge H(\lambda)$,
so $n/|S|\to 0$ as $K\to\infty$.

%%%%%%%%%%%%%%%%%%%%%%%%%%%%%%%%%%%%%%%%%%%%%%%%%%%%%%%%%%%%%%%%%%%%%%%%%%%%%%%%%%%%%%%%%%%%%%%%%%%
%% ACKNOWLEDGEMENTS
%%%%%%%%%%%%%%%%%%%%%%%%%%%%%%%%%%%%%%%%%%%%%%%%%%%%%%%%%%%%%%%%%%%%%%%%%%%%%%%%%%%%%%%%%%%%%%%%%%%

\vspace{1em}

\noindent\emph{Acknowledgements.}
The authors would like to thank Fabian M. Faulstich and Simen Kvaal for helpful discussions and comments on the manuscript. 
The useful suggestions of the anonymous reviewer are gratefully acknowledged. 
This work has received funding from the Norwegian Research Council through Grant Nos. 287906 (CCerror) and 262695 (CoE Hylleraas Center for Quantum Molecular Sciences).

\bibliographystyle{abbrv}
\bibliography{cc}

\end{document}